\numberwithin{equation}{section}
\newcommand{\R}{\ensuremath{\mathbb{R}}}
\newcommand{\C}{\ensuremath{\mathbb{C}}}
\newcommand{\N}{\ensuremath{\mathbb{N}}}
\newcommand{\D}{\ensuremath{\mathbb{D}}}
\newcommand{\norm}[1]{\ensuremath{\|#1\|}}
\newtheorem{teo}{Theorem}[section]
\newtheorem{prop}[teo]{Proposition}
\newtheorem{lem}[teo]{Lemma}
\newtheorem{cor}[teo]{Corollary}
\newtheorem{ej}[teo]{Example}
\newtheorem{obs}[teo]{Remark}
\newtheorem{question}{Question}
\newtheorem{notation}[teo]{Notation}
\def\today{{\number\day\space
 \ifcase\month\or
  January\or February\or March\or April\or May\or June\or
  July\or August\or September\or October\or November\or December\fi
 \space\number\year}}
\begin{document}

\author{GABRIEL H. TUCCI}
\title[Quasinilpotent generators]{Some quasinilpotent generators of the hyperfinite $\mathrm{II}_1$ factor}
\address{Department of Mathematics, Texas A\&M University, College Station, TX 77843-3368, USA}
\email{gtucci@math.tamu.edu}


\begin{abstract}
For each sequence $\{c_n\}_n$ in $l_{1}(\N)$ we define an operator $A$ in the hyperfinite $\mathrm{II}_1$-factor $\mathcal{R}$. We prove that these operators are quasinilpotent and they generate the whole hyperfinite $\mathrm{II}_1$-factor. We show that they have non-trivial, closed, invariant subspaces affiliated to the von Neumann algebra and we provide enough evidence to suggest that these operators are interesting for the hyperinvariant subspace problem. We also present some of their properties. In particular, we show that the real and imaginary part of $A$ are equally distributed, and we find a combinatorial formula as well as an analytical way to compute their moments. We present a combinatorial way of computing the moments of $A^{*}A$.
\end{abstract}

\maketitle

\vspace{0.3cm}

\section{Introduction}

\par Consider a von Neumann algebra $\mathcal{M}$ acting on a Hilbert space $\mathcal{H}$. A closed subspace $\mathcal{H}_0$ of $\mathcal{H}$ is said to be affiliated with $\mathcal{M}$ if the projection of $\mathcal{H}$ onto $\mathcal{H}_0$ belongs to $\mathcal{M}$. The subspace $\mathcal{H}_0$ is said to be non-trivial if $\mathcal{H}_0\neq 0$ and $\mathcal{H}_0\neq \mathcal{H}$. For $T\in \mathcal{M}$, a subspace $\mathcal{H}_0$ is said to be $T$-invariant, if $T(\mathcal{H}_0)\subseteq \mathcal{H}_0$, i.e. if $T$ and the projection $P_{\mathcal{H}_0}$ onto $\mathcal{H}_0$ satisfy
$$P_{\mathcal{H}_0}TP_{\mathcal{H}_0}=TP_{\mathcal{H}_0}.$$
$\mathcal{H}_0$ is said to be hyperinvariant for $T$ (or $T$-hyperinvariant) if it is $S$-invariant for every $S\in \mathcal{B}(\mathcal{H})$ that commutes with $T$. If the subspace $\mathcal{H}_0$ is $T$-hyperinvariant, then $P_{\mathcal{H}_0}\in W^{*}(T)=\{T,T^{*}\}''$ (cf. \cite{B-circ}). However, the converse statement does not hold true. In fact, one can find $A\in M_{3}(\C)$ and an $A$-invariant projection $P\in W^{*}(A)$ which is not $A$-hyperinvariant (cf. \cite{B-circ}).
\par The invariant subspace problem relative to the von Neumann algebra $\mathcal{M}$ asks whether every operator $T$ has a non-trivial, closed, invariant subspace $\mathcal{H}_0$ affiliated with $\mathcal{M}$, and the hyperinvariant subspace problem asks whether one can always choose such an $\mathcal{H}_0$ to be hyperinvariant for $T$. Of course, if $\mathcal{M}$ is not a factor, then the answer to both of these questions is {\it yes}. Also, if $\mathcal{M}$ of finite dimension, i.e. $\mathcal{M}\cong M_{n}(\C)$ for some $n\in\N$, then every operator in $\mathcal{M}\backslash \C 1$ has a non-trivial eigenspace, and therefore a non-trivial $T$-invariant subspace. Recall from \cite{Brown} that every operator in a $\mathrm{II}_1$-factor defines a probability measure $\mu_{T}$ on $\C$, the Brown measure of $T$, with $\mathrm{supp}(T)\subseteq \sigma(T)$. In \cite{Ha}, Uffe Haagerup and Hanne Schultz made a huge advance in this problem. Namely, they proved that if the Brown measure of the operator $T$ is not concentrated in one point, then the operator $T$ has a non-trivial, closed, invariant subspace, affiliated with $\mathcal{M}$ and moreover, this subspace is hyperinvariant. More specifically, for each Borel set $B\subseteq \C$, they constructed a maximal, closed, $T$-invariant subspace, $\mathcal{K}=\mathcal{K}_T(B)$, affiliated with $\mathcal{M}$, such that the Brown measure of $T\vert_{\mathcal{K}}$ is concentrated on $B$ and if we denote by $P$ the projection onto this subspace, then $\tau(P)=\mu_{T}(B)$. Therefore, if $\mu_{T}$ is not a Dirac measure, then $T$ has a non-trivial invariant subspace affiliated with $\mathcal{M}$. If the Borel set $B$ is a closed ball of radius $r$ centered at $\lambda$. Then $\mathcal{K}_T(B)$ is the set of vectors $\xi\in\mathcal{H}$, for which there is a sequence $\{\xi_n\}_{n}$ in $\mathcal{H}$ such that
$$\lim_{n}{\norm{\xi_n-\xi}}=0 \textrm{\quad and \quad} \limsup_{n}{\norm{(T-\lambda 1)^{n}\xi_n}^{\frac{1}{n}}}\leq r.$$

\par As regards the invariant subspace problem relative to the von Neumann algebra, the following question remains completely open: If $T$ is an operator in a $\mathrm{II}_1$-factor $\mathcal{M}$ and if the Brown measure $\mu_{T}$ is a Dirac measure, for example if $T$ is quasinilpotent, does $T$ has a non-trivial closed, invariant subspace affiliated with $W^{*}(T)$?   

\vspace{0.3cm}

\par In \cite{DH}, Dykema and Haagerup introduced the family of DT-operators and they studied many of their properties. The case of the quasinilpotent DT-operator arose as a natural candidate for an operator without an invariant subspace affiliated to the von Neumann algebra. Later on, in \cite{DT1}, Dykema and Haagerup finally showed that every quasinilpotent DT-operator $T$ has a one-parameter family of non-trivial hyperinvariant subspaces. In particular, they proved that for $t\in [0,1]$, 
$$\mathcal{H}_t:=\Big \{\xi\in\mathcal{H} \,:\, \limsup_{n}{\Big( \frac{k}{e}\norm{T^{k}\xi}\Big )^{\frac{2}{k}}}\leq t \Big\}$$
is a closed, hyperinvariant subspace of $T$.

\vspace{0.5cm}

\par In this paper, for each sequence $\{c_n\}_n\in l_1(\N)$ we define an operator $A$ in the hyperfinite $\mathrm{II}_1$-factor. These operators are quasinilpotent, and under a certain mild restriction on the sequence $\{c_n\}_n$ they generates the whole hyperfinite $\mathrm{II}_1$-factor. As a corollary of the proof that $A$ is quasinilpotent we deduce that given $\{c_n\}_n\in l_1(\N)$ then 
$$\limsup_{k}{(k!\,\,\sigma_k)^{1/k}}=0 \textrm{\quad where\quad } \sigma_k:=\sum_{1\leq n_1<n_2<\ldots <n_k}{|c_{n_1}c_{n_2}\ldots c_{n_k}|}.$$
We also show that these operators have invariant subspaces affiliated with the von Neumann algebra. The projections onto these subspaces live in the diagonal masa 
$$\mathcal{D}:=\overline{\Bigg(\bigotimes_{n=1}^{+\infty}{\D_{2}(\C)}\Bigg)}^{\mathrm{WOT}}\subset \mathcal{R}$$
where $\D_{2}(\C)$ is the algebra of the $2\times 2$ diagonal matrices. We also show that none of these projections is hyperinvariant. Moreover, we show that if $p$ is a non-trivial hyperinvariant projection for $A$ then 
$$p\notin \bigcup_{n=1}^{+\infty}\Big({\bigotimes_{k=1}^{n}{M_2(\C)}}\Big).$$

\vspace{0.4cm}
\noindent In section $\S4$ we show that these operators have trivial kernel and dense range. We prove also that given $r>0$ and any sequence $\{\gamma_n\}_{n=1}^{+\infty}$ of positive numbers, if we define the subspace $\mathcal{H}_{r}(A)$ by

\begin{equation*}
\mathcal{E}_{r}(A):=\{\xi\in\mathcal{H}\,:\, \limsup_{n}{\,\gamma_n\norm{A^{n}(\xi)}^{1/n}}\leq r\}\quad \textrm{and}\quad  \mathcal{H}_{r}(A)=\overline{\mathcal{E}_{r}(A)}
\end{equation*}

\noindent then this subspace is either $\mathcal{H}$ or $\{0\}$. We are unable to determine if the operator $A$ has a non-trivial hyperinvariant subspace, and for the evidence showed above, it is a possible counterexample to the hyperinvariant subspace problem.

\vspace{0.4cm}

\noindent In section $\S5$, we show that the real and imaginary part of $A$, $a:=\mathrm{Re}(A)$ and $b:=\mathrm{Im}(A)$, are equally distributed. We find a combinatorial formula as well as an analytical way to compute their moments. We also compute some of their mixed moments. We prove also that when $c_{n}=\alpha^{n}$ where $0<\alpha\leq \frac{1}{2}$ then $W^{*}(a)$ is a Cartan masa in the hyperfinite and we find countably many values of $\alpha\in (\frac{1}{2},1)$ in which $W^{*}(a)$ is not maximal abelian. However, for all the values of $\alpha\in (0,1)$ this algebra is diffuse.
In section $\S6$, we find a combinatorial formula for the moments of $A^{*}A$ in terms of alternating partitions of elements of two different colors. We also ask a question regarding these partitions.

\vspace{0.3cm}
\noindent{\it Acknowledgment:} I thank my advisor, Ken Dykema, for many helpful discussions and comments.

\vspace{0.6cm}

\section{Notation and Preliminaries}

\subsection{Infinite tensor products of finite von Neumann algebras}
\par The Hilbert space tensor product of two Hilbert spaces is the completion of their algebraic tensor product. One can define a tensor product of von Neumann algebras (a completion of the algebraic tensor product of the algebras considered as rings), which is again a von Neumann algebra, and acts on the tensor product of the corresponding Hilbert spaces. The tensor product of two finite algebras is finite, and the tensor product of an infinite algebra and a non-zero algebra is infinite. The type of the tensor product of two von Neumann algebras (I, II, or III) is the maximum of their types. The Tomita commutation Theorem for tensor products states that
$$(M\,\overline{\otimes}\, N)'=M'\,\overline{\otimes}\, N'.$$
\par The tensor product of an infinite number of von Neumann algebras, if done naively, is usually a ridiculously large non-separable algebra. Instead one usually chooses a state on each of the von Neumann algebras, uses this to define a state on the algebraic tensor product, which can be used to product a Hilbert space and a (reasonably small) von Neumann algebra. Given finite factors $\{\mathcal{M}_n \}_{n=1}^{+\infty}$, denote $\tau_{n}$ the unique faithful normal trace on $\mathcal{M}_n$. We write $\bigotimes_{n=1}^{+\infty}{\mathcal{M}_n}$ for the algebraic tensor product, that is finite linear combination of elementary tensors $\bigotimes_{n=1}^{+\infty}{x_n}$, where $x_n\in\mathcal{M}_n$ and all but finitely many $x_n$ are 1. We have the product state $\tau$ on $\bigotimes_{n=1}^{+\infty}{\mathcal{M}_n }$ defined on elementary tensors by 

$$\tau \Bigg(\bigotimes_{n=1}^{+\infty}{x_n}\Bigg)=\prod_{n=1}^{+\infty}{\tau_{n}(x_n)}.$$

\vspace{0.2cm}

\noindent Now let $\pi$ be the representation of $\bigotimes_{n=1}^{+\infty}{\mathcal{M}_n}$ by left multiplication on the Hilbert space $L^2\Big(\bigotimes_{n=1}^{+\infty}{\mathcal{M}_n}\Big)$ in the usual way. The infinite von Neumann tensor product of the $\mathcal{M}_n$ is then the weak-closure of the image of $\pi$. This is necessarily a finite factor, as it has a trace, namely the extension of $\tau$, which is the unique normalized trace. The Tomita commutation Theorem remains true in this infinite setting.

\vspace{0.4cm}

\subsection{The hyperfinite $\mathrm{II}_1$-factor}
\par Let $\mathcal{M}$ a finite von Neumann algebra and $\tau$ a faithful normal trace. Given an element $x$ in such a von Neumann algebra, we will denote $\norm{x}_2=\tau(x^{*}x)^{1/2}$. Let $L^{2}(\mathcal{M})$ the Hilbert space obtained by the completion of $\mathcal{M}$ with respect to the $\norm{\cdot}_2$. We shall follow the tradition in the subject of regarding $\mathcal{M}$ as a subset of $L^{2}(\mathcal{M})$ whenever it is convenient. The standard form is the representation of $\mathcal{M}\subset \mathcal{B}(L^{2}(\mathcal{M}))$ obtained by letting each $x$ in $\mathcal{M}$ act by left multiplication on $L^{2}(\mathcal{M})$.
\par Murray and von Neumann defined the approximate finite dimensional property (AFD). Namely, a $\mathrm{II}_1$-factor $\mathcal{M}$ is said to be AFD when for any $x_1,\ldots,x_n\in \mathcal{M}$ and strong neighborhood $V$ of $0$ in $\mathcal{M}$ there exists a finite dimensional $*$-subalgebra $\mathcal{N}$ of $\mathcal{M}$ such that $x_{i}\in \mathcal{N}+V$ for each $i$. Let $M_{2}(\C)$ be the algebra of $2\times 2$ matrices. Then the infinite tensor product 
\begin{equation}\label{R}
\mathcal{R}:=\overline{\Bigg(\bigotimes_{n=1}^{+\infty}{M_{2}(\C)}\Bigg )}^{\mathrm{WOT}} 
\end{equation}
produced with respect to the unique normalized trace on $M_{2}(\C)$ is a $\mathrm{II}_1$-factor, which is obviously AFD. In \cite{vN}, Murray and von Neumann showed that up to isomorphism this is the unique AFD $\mathrm{II}_1$-factor. In complete contrast with the $C^{*}$-case, the resulting object is independent of the size of the matrices algebras involved.
\par Given a discrete group we can always define a finite von Neumann algebra via the left or right regular representation. This algebra is called the group von Neumann algebra. The group von Neumann algebra of a discrete group with the infinite conjugacy class property is a factor of type $\mathrm{II}_1$, and if the group is amenable and countable then the factor is AFD. There are many groups with these properties, as any group such that any finite subset generates a finite subgroup is amenable. For example, the group von Neumann algebra of the infinite symmetric group of all permutations of a countable infinite set that fix all but a finite number of elements is the hyperfinite type $\mathrm{II}_1$ factor.

\vspace{0.6cm}

\section{Quasinilpotent Generators}
\noindent In this section we construct the operators described before. We define the $2\times 2$ matrices $V$, $Q$ and $P$ by
\begin{equation*}
V:=\left( \begin{array}{cc}
0 & 1\\
0 & 0\\
\end{array} \right),\quad 
Q:=V^{*}V=\left( \begin{array}{cc}
0 & 0\\
0 & 1\\
\end{array} \right), \quad
P:=VV^{*}=\left( \begin{array}{cc}
1 & 0\\
0 & 0\\
\end{array} \right). 
\end{equation*}

\noindent Let $\{c_{n}\}_n$ be a sequence in $l_{1}(\N)$ and let us consider $A_n:=c_1\, V+c_2\, I\otimes V +\ldots +c_n\, I^{\otimes (n-1)}\otimes V\in\mathcal{R}$ then $\norm{A_n}\leq \sum_{k=1}^{n}{|c_k|}$ for all 
$n\geq 1$. The sequence $\{A_n\}_n$ is Cauchy in norm since by assumption $\sum_{n=1}^{+\infty}{|c_n|}<+\infty$. 
Therefore, it converges in the operator norm to an operator $A$ in the hyperfinite $\mathrm{II}_1$-factor with
\begin{equation}\label{defA}
A:=\sum_{n=1}^{+\infty}{c_n \,V_{n}} \quad\quad\textrm{where}\quad\quad V_{n}:=I^{\otimes (n-1)}\otimes V. 
\end{equation}
We will prove that this operator is quasinilpotent and that under certain mild hypothesis it generates the hyperfinite 
$\mathrm{II}_1$ factor $\mathcal{R}$.

\vspace{0.4cm}

\begin{teo}
The operator $A$ in (\ref{defA}) is quasinilpotent. 
\end{teo}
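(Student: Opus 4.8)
The plan is to estimate $\norm{A^k}$ directly and show that it decays super-exponentially. The starting point is that the partial isometries $V_n$ act on pairwise distinct tensor legs, so they pairwise commute, while $V_n^2=I^{\otimes(n-1)}\otimes V^2\otimes\cdots=0$. Expanding $A^k=\big(\sum_n c_nV_n\big)^k$ into the absolutely norm-convergent multi-sum $\sum_{(n_1,\dots,n_k)\in\N^k}c_{n_1}\cdots c_{n_k}\,V_{n_1}\cdots V_{n_k}$, any tuple with a repeated index contributes $0$ (commute equal factors together and use $V_n^2=0$), and for a tuple of distinct indices the product $V_{n_1}\cdots V_{n_k}$ depends only on the set $\{m_1<\dots<m_k\}$ it determines and is a partial isometry, hence of norm $1$. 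Collecting the $k!$ orderings of each $k$-element set gives $A^k=k!\sum_{m_1<\dots<m_k}c_{m_1}\cdots c_{m_k}\,V_{m_1}\cdots V_{m_k}$, so by the triangle inequality $\norm{A^k}\le k!\,\sigma_k$ with $\sigma_k=\sum_{m_1<\dots<m_k}|c_{m_1}\cdots c_{m_k}|$ as in the Introduction. Thus the theorem, together with the corollary stated there, will follow once we show $\limsup_k(k!\,\sigma_k)^{1/k}=0$.

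This last claim is the real content of the proof, and it is where the hypothesis $\{c_n\}\in l_1(\N)$ — as opposed to the mere finiteness of $\sum_n|c_n|$ — must genuinely be used: the crude bound $\sigma_k\le(\sum_n|c_n|)^k/k!$ alone yields only $r(A)\le\sum_n|c_n|$. The plan here is a head/tail split. Given $\varepsilon\in(0,1)$, choose $N$ with $\sum_{n>N}|c_n|<\varepsilon$, and decompose a $k$-subset according to how many of its elements lie in $\{1,\dots,N\}$:
\begin{equation*}
\sigma_k=\sum_{j=0}^{\min(k,N)}e_j\,\sigma^{(N)}_{k-j},
\end{equation*}
where $e_j$ is the $j$-th elementary symmetric function of $|c_1|,\dots,|c_N|$ (so $e_j\le\prod_{i\le N}(1+|c_i|)=:M_N$) and $\sigma^{(N)}_m$ is the analogue of $\sigma_m$ formed from the tail $\{|c_n|\}_{n>N}$, for which $m!\,\sigma^{(N)}_m\le(\sum_{n>N}|c_n|)^m<\varepsilon^m$. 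Hence, for $k\ge N$,
\begin{equation*}
k!\,\sigma_k\le M_N\sum_{j=0}^{N}\frac{k!}{(k-j)!}\,\varepsilon^{\,k-j}\le M_N(N+1)\,k^{N}\varepsilon^{\,k-N},
\end{equation*}
so that $(k!\,\sigma_k)^{1/k}\to\varepsilon$ as $k\to\infty$. Letting $\varepsilon\downarrow0$ gives $\limsup_k(k!\,\sigma_k)^{1/k}=0$.

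Finally, by the spectral radius formula in $\mathcal{R}$ we conclude $r(A)=\lim_k\norm{A^k}^{1/k}\le\limsup_k(k!\,\sigma_k)^{1/k}=0$, so $\sigma(A)=\{0\}$ and $A$ is quasinilpotent; and since $\{c_n\}\in l_1(\N)$ was arbitrary, the displayed corollary of the Introduction is obtained along the way. The only step that requires care is the tail estimate of the middle paragraph — everything else is bookkeeping for a commuting family of square-zero operators — and the point to watch is precisely that one must exploit the decay of the tails $\sum_{n>N}|c_n|\to0$, not just the finiteness of $\sum_n|c_n|$.
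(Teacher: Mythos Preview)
Your proof is correct. Both you and the paper first reduce to the estimate $\norm{A^k}\le k!\,\sigma_k$ via the commuting, square-zero structure of the $V_n$; the divergence is in how the key fact $\limsup_k(k!\,\sigma_k)^{1/k}=0$ is established.

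The paper's argument is complex-analytic: it observes that $f(z)=\prod_{n\ge1}(1+|c_n|z)=\sum_k\sigma_k z^k$ is entire (by Weierstrass, using $\{c_n\}\in l_1$), and then forms the Borel-type transform $g(z)=\int_0^{\infty}f(tz)e^{-t}\,dt$, checks via a growth estimate and Morera's theorem that $g$ is also entire, and reads off $g(z)=\sum_k k!\,\sigma_k z^k$ from $k!=\int_0^{\infty}t^k e^{-t}\,dt$. Entirety of $g$ then forces $(k!\,\sigma_k)^{1/k}\to0$. Your argument is purely elementary: the head/tail decomposition $\sigma_k=\sum_{j\le N}e_j\,\sigma^{(N)}_{k-j}$ together with $e_j\le\prod_{i\le N}(1+|c_i|)$ and $(k-j)!\,\sigma^{(N)}_{k-j}\le\big(\sum_{n>N}|c_n|\big)^{k-j}<\varepsilon^{k-j}$ gives the polynomial-in-$k$ times $\varepsilon^k$ bound directly. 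What your route buys is that it avoids complex analysis altogether and makes transparent exactly where the $l_1$ hypothesis enters (namely, that the tails $\sum_{n>N}|c_n|$ can be made arbitrarily small, not merely that the full sum is finite); what the paper's route buys is a conceptual identification of $\sum_k k!\,\sigma_k z^k$ as an explicit integral transform of a Weierstrass product, which is attractive in its own right. One cosmetic remark: your sentence contrasting ``$\{c_n\}\in l_1$'' with ``the mere finiteness of $\sum_n|c_n|$'' reads oddly, since these are the same condition; what you evidently mean --- and say clearly at the end --- is that one must exploit the tail decay $\sum_{n>N}|c_n|\to0$, not just the single numerical bound $\sigma_k\le(\sum_n|c_n|)^k/k!$.
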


\begin{proof}
Let $A=\sum_{n=1}^{+\infty}{c_n\, I^{\otimes (n-1)}\otimes V}$ then using that $V^{2}=0$ we see that
$$A^{k}=k!\sum_{1\leq n_1<n_2<\ldots <n_k}{c_{n_1}c_{n_2}\ldots c_{n_k}\,\, V_{n_1}V_{n_2}\ldots V_{n_k}} \textrm{\quad where \quad} V_{n}:=I^{\otimes(n-1)}\otimes V.$$ 
Then $\norm{A^{k}}\leq k!\sum_{1\leq n_1<n_2<\ldots <n_k}{|c_{n_1}c_{n_2}\ldots c_{n_k}|}$. Let us define 
$$\sigma_k:=\sum_{1\leq n_1<n_2<\ldots <n_k}{|c_{n_1}c_{n_2}\ldots c_{n_k}|} \,\,\,\,\, \mathrm{and}\,\,\,\,\, 
\sigma_{0}:=1.$$ 
Therefore,
\begin{equation}\label{Ak}
\norm{A^{k}}\leq k! \sigma_k.
\end{equation}

\vspace{0.3cm}
\noindent Consider the function $f(z):=\Pi_{n=1}^{+\infty}{(1+zc_n)}=\sum_{n=0}^{+\infty}{\sigma_n z^n}.$ 
Using the Weierstrass factorization Theorem \cite{Gr} and the fact that the sequence $\{c_n\}_n$ is absolutely sumable we see that 
the function $f(z)$ is entire. From this function $f(z)$ we define formally $g(z)$ by

\begin{equation}
g(z):=\int_{0}^{+\infty}{f(tz)e^{-t}\,dt}.
\end{equation}

\noindent Now we will prove that the function $g(z)$ is well defined, entire and its power series expansion is
$g(z)=\sum_{n}{n!\sigma_n z^n}$. Therefore, $\limsup{(n!\sigma_n)^{1/n}}=0$ and using (\ref{Ak}), we deduce that $A$ is quasinilpotent.\\

\noindent Take $R>0$ then there exists $N_0$ such that $\sum_{k=N_0+1}^{+\infty}{|c_k|}<\frac{1}{2R}$. Then for $|w|<R$

$$|f(tw)|=\Pi_{n=1}^{+\infty}{|1+wtc_n|}=\Pi_{n=1}^{N_0}{|1+wtc_n|}\,\cdot\, \Pi_{n=N_0+1}^{+\infty}{|1+twc_n|}\leq $$
$$\leq K_R \,\cdot\, \mathrm{exp}\Bigg(\sum_{n=N_0+1}^{+\infty}{|c_ntw|}\Bigg)=K_R \cdot 
\mathrm{exp}\Bigg(t|w|\sum_{n=N_0+1}^{+\infty}{|c_n|}\Bigg)\leq$$
$$\leq K_R\cdot\mathrm{exp}\Bigg(t|w|\frac{1}{2R}\Bigg)\leq K_R\cdot \mathrm{exp}(t/2).$$

\vspace{0.3cm}
\noindent Therefore, 
\begin{equation}\label{eqg}
|g(w)|\leq \int_{0}^{+\infty}{K_R\,\cdot\, e^{-t/2}\,dt}=\frac{K_R}{2}, \,\,\,\,\,\mathrm{for\,\,all}\,\,\,\,\,\, |w|<R.
\end{equation}

\noindent Then $g$ is 
a well defined function for all $w\in \C$. Moreover, for all closed curves $\gamma$ contained in the disk of radius $R$
centered at origin we have

$$\oint_{\gamma}{g(z)dz}=\oint_{\gamma}{\Bigg(\int_{0}^{+\infty}{f(tz)e^{-t}\,dt}\Bigg)dz}=
\int_{0}^{+\infty}{e^{-t}\Bigg(\oint_{\gamma}{f(tz)dz}\Bigg)dt}=0.$$
\noindent(Note that we are allowed to interchange the integrals by applying Fubini's Theorem since $g$ is bounded (\ref{eqg}).) Using Morera's Theorem we see that $g$ is holomorphic in the disk of radius $R$, and since $R$ is arbitrary, $g$ is entire. The fact that the function $g$ has the desired power series expansion comes from the fact that
$k!=\int_{0}^{+\infty}{t^ke^{-t}dt}$.
\end{proof}

\vspace{0.4cm}
From the proof of the last Theorem we observe that something a little bit more general was proved. We state it in the next corollary.

\vspace{0.3cm}
\begin{cor} Let $\{c_n\}_n$ a sequence of complex number in $l_1(\N)$. Then $$\limsup_{k}{(k!\,\,\sigma_k)^{1/k}}=0$$ where 
$\sigma_k:=\sum_{1\leq n_1<n_2<\ldots <n_k}{|c_{n_1}c_{n_2}\ldots c_{n_k}|}$.
\end{cor}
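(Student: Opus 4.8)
The plan is to observe that $\sigma_k$ depends only on the moduli $|c_n|$, so we may as well replace $c_n$ by $|c_n|$; then the corollary is nothing more than the statement, already established inside the proof of the theorem above, that the associated entire function $g$ has power series $\sum_n n!\,\sigma_n z^n$, together with the Cauchy--Hadamard formula. So the proof is really an extraction from the preceding argument.

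First I would set $f(z):=\prod_{n=1}^{+\infty}(1+z|c_n|)$. Since $\{c_n\}_n\in l_1(\N)$, the Weierstrass factorization theorem shows $f$ is entire, and expanding the (locally uniformly convergent) product gives $f(z)=\sum_{n=0}^{+\infty}\sigma_n z^n$ with $\sigma_n$ as in the statement and $\sigma_0=1$; crucially all coefficients $\sigma_n$ are nonnegative. Then, following the estimates in the proof of the theorem verbatim with $|c_n|$ in the role of $c_n$, I would define $g(z):=\int_0^{+\infty}f(tz)e^{-t}\,dt$. The bound $|f(tw)|\leq K_R\cdot e^{t/2}$ for $|w|<R$ shows the integral converges and $|g(w)|\leq K_R/2$ there; Fubini's theorem then permits interchanging $\oint_\gamma$ with $\int_0^{+\infty}$, and Morera's theorem gives that $g$ is holomorphic on every disk, hence entire.

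Next, to pin down the Taylor coefficients of $g$: writing $f(tz)=\sum_n\sigma_n t^n z^n$ with nonnegative coefficients, the partial sums are dominated by $f(t|z|)\leq K_R\cdot e^{t/2}$ on $|z|<R$, so dominated (equivalently monotone) convergence justifies term-by-term integration, and $\int_0^{+\infty}t^n e^{-t}\,dt=n!$ yields $g(z)=\sum_{n=0}^{+\infty}n!\,\sigma_n z^n$. Since $g$ is entire, this power series has infinite radius of convergence, so by Cauchy--Hadamard $\limsup_k(k!\,\sigma_k)^{1/k}=0$, which is exactly the claim.

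The only genuinely delicate steps — both already dealt with in the theorem's proof — are the two interchanges of limit and integral: the contour-integral swap needed for Morera, and the series swap needed to identify the coefficients. Both are controlled by the single uniform bound $|f(tw)|\leq K_R\cdot e^{t/2}$, whose proof in turn uses the $l_1$-summability of $\{c_n\}_n$ to bound the tail of the infinite product $\prod_{n>N_0}(1+tw|c_n|)$ by $\exp\!\big(t|w|\sum_{n>N_0}|c_n|\big)$. Once one is comfortable with that estimate, the rest is routine, and the passage from "$g$ entire" to "$\limsup(k!\sigma_k)^{1/k}=0$" is immediate.
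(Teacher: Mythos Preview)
Your proposal is correct and is exactly the paper's approach: the paper itself presents this corollary as a direct extraction from the proof of the preceding theorem, and you have carried out precisely that extraction, with the sensible explicit replacement of $c_n$ by $|c_n|$ so that the expansion of $f$ genuinely has coefficients $\sigma_k$. Your added justification (via monotone/dominated convergence) for the term-by-term integration identifying the Taylor coefficients of $g$ is, if anything, more careful than what the paper writes.
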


\vspace{0.6cm}

In the next Theorem we will prove that under certain mild hypothesis the operator $A$ generates the whole hyperfinite
$\mathrm{II}_1$-factor $\mathcal{R}$ as in (\ref{R}).

\vspace{0.3cm}
\begin{teo}
Let $\{c_n\}_n$ be a sequence of complex numbers in $l_1(\N)$ such that $|c_{i}|\neq |c_{j}|$ whenever $i\neq j$ and $c_{j}\neq 0$ for all $j\geq 1$.
Then the von Neumann algebra generated by $A$ is $\mathcal{R}$. Moreover, if there exist $i\neq j$ so that $|c_{i}|=|c_{j}|$ then the von Neumann algebra generated by $A$ is not the whole hyperfinite factor.
\end{teo}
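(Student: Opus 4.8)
The plan is to establish the two statements separately. For the first, I would show that the diagonal masa $\mathcal{D}$ is contained in $W^{*}(A)$ by extracting it from iterated commutators of $A$ and $A^{*}$, and then recover every $V_n$ from $A$ using diagonal projections, which forces $W^{*}(A)\supseteq\mathcal{R}$ (the reverse inclusion being automatic since $A\in\mathcal{R}$). The crucial point is a cancellation. Write $z_n:=I^{\otimes(n-1)}\otimes(P-Q)$, a self-adjoint unitary; the $z_n$ commute and, since $\D_2(\C)=\gen\{I,P-Q\}$, they generate $\mathcal{D}$ as a von Neumann algebra. Because $V_nV_m^{*}=V_m^{*}V_n$ for $n\neq m$ (these act on different tensor legs), all off-diagonal terms of $AA^{*}-A^{*}A$ cancel; passing to the norm limit of the partial sums $A_N=\sum_{n\le N}c_nV_n$ gives
\[
[A,A^{*}]=\sum_{n=1}^{\infty}|c_n|^{2}z_n=:E_1\in W^{*}(A),
\]
a norm-convergent sum (recall $\sum_n|c_n|^{2}<\infty$ since $\{c_n\}\in l_1(\N)$).

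Next I would iterate. From $(P-Q)V=V$ and $V(P-Q)=-V$ one gets $[z_n,V_n]=2V_n$ and $[z_n,V_m]=0$ for $m\neq n$, so $\tfrac12[E_1,A]=\sum_n|c_n|^{2}c_nV_n\in W^{*}(A)$; repeating, the suitably normalized $j$-fold commutator of $E_1$ against $A$ equals $\sum_n|c_n|^{2j}c_nV_n\in W^{*}(A)$, and commuting this with $A^{*}$ (again cancelling off-diagonal terms) yields $E_{j+1}:=\sum_n|c_n|^{2(j+1)}z_n\in W^{*}(A)$. Hence $E_j=\sum_n t_n^{\,j}z_n\in W^{*}(A)$ for all $j\ge1$, where $t_n:=|c_n|^{2}$. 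The $t_n$ are positive, pairwise distinct (the hypothesis), and $t_n\to0$, so listing them in decreasing order each index has only finitely many predecessors, and a strong induction gives $z_n\in W^{*}(A)$ for every $n$: assuming $z_m\in W^{*}(A)$ whenever $t_m>t_n$, the element $G_j:=E_j-\sum_{t_m>t_n}t_m^{\,j}z_m=t_n^{\,j}z_n+\sum_{t_m<t_n}t_m^{\,j}z_m$ lies in $W^{*}(A)$, and $\norm{t_n^{-j}G_j-z_n}\le\sum_{t_m<t_n}(t_m/t_n)^{j}\to0$ as $j\to\infty$ by dominated convergence. Therefore $\mathcal{D}=W^{*}(\{z_n\}_n)\subseteq W^{*}(A)$.

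To conclude the first part, set $P_n:=\tfrac12(I+z_n)=I^{\otimes(n-1)}\otimes P$ and $Q_n:=\tfrac12(I-z_n)=I^{\otimes(n-1)}\otimes Q$, both in $\mathcal{D}\subseteq W^{*}(A)$. Since $PQ=0$ we have $P_nV_mQ_n=0$ for $m\neq n$, while $PVQ=V$ gives $P_nV_nQ_n=V_n$; hence $P_nAQ_n=c_nV_n$, and as $c_n\neq0$ we get $V_n=c_n^{-1}P_nAQ_n\in W^{*}(A)$ (and $V_n^{*}$ likewise). Since $\{P_n,Q_n,V_n,V_n^{*}\}$ spans $I^{\otimes(n-1)}\otimes M_2(\C)$, the algebra $W^{*}(A)$ contains the $n$-th matrix factor for all $n$, hence equals $\mathcal{R}$.

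For the converse, suppose $|c_i|=|c_j|$ with $i\neq j$; if $c_i=c_j=0$ then $A$ commutes with the $M_2(\C)$ in the $i$-th leg and $W^{*}(A)$ is visibly proper, so assume $c_j=\lambda c_i$ with $|\lambda|=1$. Let $W\in M_2(\C)\otimes M_2(\C)$ be the swap unitary placed in legs $i,j$ (conjugation by $W$ transposes those legs), and put $u:=W\cdot(\mathrm{diag}(\lambda,1))^{(i)}\cdot(\mathrm{diag}(\bar\lambda,1))^{(j)}\in\mathcal{R}$. Using that conjugation by $\mathrm{diag}(\alpha,\beta)$ with $|\alpha|=|\beta|=1$ sends $V$ to $(\alpha/\beta)V$, together with $c_j\bar\lambda=c_i$, a direct check gives $uAu^{*}=A$, hence $uA^{*}u^{*}=A^{*}$, so $u\in W^{*}(A)'$; since $u$ is a non-scalar unitary in the factor $\mathcal{R}$, $W^{*}(A)\subseteq\{u\}'\cap\mathcal{R}\subsetneq\mathcal{R}$. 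The step I expect to demand the most care is the extraction of $\mathcal{D}$: one must justify the norm-convergence of the various series, check that the normalized iterated commutators stay in $W^{*}(A)$ (the map $X\mapsto[E_1,X]$ is norm-continuous since $E_1$ is bounded), and organize the induction so as to cope with the accumulation of the $t_n$ at $0$; the cancellation $[A,A^{*}]\in\mathcal{D}$ and the peeling limit $E_j/t_n^{\,j}\to z_n$ are the two ideas that make everything work.
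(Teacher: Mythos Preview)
Your argument is correct and follows the same overall architecture as the paper's proof---first extract diagonal data from $A,A^{*}$ via commutators, then recover each $V_n$---but the execution differs in a couple of instructive ways.

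For the forward direction, the paper computes $A^{*}A=q_2+v$ and $AA^{*}=p_2+v$ with a common off-diagonal piece $v$, subtracts to isolate $p_2,q_2$, and then iterates $q_2A-Aq_2=\sum_n c_n^{3}V_n$ to reach $q_{N}$ for arbitrarily large $N$; it then extracts $Q_1$ as a \emph{spectral projection} of $q_N$ (using $\big(\sum_{n\ge1}c_n^{N}\big)^{1/N}\to c_1>c_2$) and peels off indices one by one. Your route is a bit cleaner: the single commutator $[A,A^{*}]=\sum_n|c_n|^{2}z_n$ lands directly in $\mathcal{D}$, the iteration $E_j=\sum_n t_n^{\,j}z_n$ is transparent, and your peeling via $t_n^{-j}G_j\to z_n$ is a norm-limit argument rather than spectral calculus. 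This avoids the paper's preliminary reduction (via an automorphism) to a strictly decreasing positive sequence. Both methods require only that the $|c_n|$ be distinct and nonzero; what you gain is a more streamlined bookkeeping, what the paper gains is an explicit description of the spectral picture of $q_N$.

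For the converse, the paper writes down the operator
\[
S_{n,m}=P_nQ_m+Q_nP_m-\tfrac{c_n}{c_m}V_nV_m^{*}-\tfrac{c_m}{c_n}V_n^{*}V_m,
\]
checks $S_{n,m}A=AS_{n,m}$, and observes that when $c_n=c_m$ it is self-adjoint, hence lies in $W^{*}(A)'\cap\mathcal{R}$. Your swap-and-twist unitary $u$ is a different (and arguably more conceptual) witness: the equality $|c_i|=|c_j|$ is visibly a symmetry of $A$, and you exhibit the implementing unitary. The paper's $S_{n,m}$, however, is reused later in the paper to show that various explicit $A$-invariant projections are not $A$-hyperinvariant, so it has independent value beyond this theorem.
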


\vspace{0.3cm}

\begin{proof}
By applying an automorphism, if necessary, we can assume without loss of generality that $c_1>c_2>\ldots>c_n>c_{n+1}>\dots>0$. Let us define 
\begin{equation}
q_k:=\sum_{n=1}^{+\infty}{c_n^k\,I^{\otimes(n-1)}\otimes Q}\,\,\,\,\,\,\,\,\,\,
p_k:=\sum_{n=1}^{+\infty}{c_n^k\,I^{\otimes(n-1)}\otimes P}
\end{equation}
and
\begin{equation*}
v_{n,m}:=I^{\otimes(n-1)}\otimes V\otimes I^{\otimes(m-n-1)}\otimes V^{*}+I^{\otimes(n-1)}\otimes V^*\otimes 
I^{\otimes(m-n-1)}\otimes V \,\,\,\,\mathrm{for}\,\,\,n<m.
\end{equation*}

\vspace{0.3cm}
\noindent Then we can see that $A^{*}A=q_2+\sum_{1\leq n <m}{c_nc_m\,v_{n,m}}=q_2+v$ and 
$AA^{*}=p_2+\sum_{1\leq n <m}{c_nc_m\,v_{n,m}}=p_2+v$ where $v:=\sum_{1\leq n <m}{c_nc_m\,v_{n,m}}$. Observing that 
$p_2+q_2=\sum_{n=1}^{+\infty}{c_{n}^2}$ we see that $\{v,p_2,q_2\}\in W^{*}(A)$.\\
\noindent Then $q_2A-Aq_2=\sum_{n=1}^{+\infty}{c_n^3 \,I^{\otimes(n-1)}\otimes V}$ and therefore $q_6\in W^{*}(A)$. 
Repeating the same argument we see that given $k\geq 1$ there exists $N(k)\geq k$ such that $q_{N(k)}\in W^{*}(A)$. 
Now observing that

\begin{equation*}
\lim_{k}{\Bigg(\sum_{n=1}^{+\infty}{c_n^{N(k)}\Bigg)^{\frac{1}{N(k)}}}}
=\max \{c_n\,:\,n\geq1 \}=c_1>c_2=\lim_{k}{\Bigg(\sum_{n=2}^{+\infty}{c_n^{N(k)}\Bigg)^{\frac{1}{N(k)}}}}
\end{equation*}
\vspace{0.2cm}

\noindent we obtain $Q$ as a spectral projection of $q_{N(k)}$ for $k$ sufficiently large. So, $Q\in W^{*}(A)$ and since 
$AQ-QA=c_1\,V$ we have also that $V\in W^{*}(A)$. Repeating the same 
argument now using that $c_2>c_3$ we obtain that $I\otimes Q,\,\,I\otimes V\in W^{*}(A)$. Analogously, 
$I^{\otimes (n-1)}\otimes Q,\,\,I^{\otimes (n-1)}\otimes V\in W^*(A)$ for all $n\geq 1$. Then 
$$\{I^{\otimes (n-1)}\otimes Q,\,\,I^{\otimes (n-1)}\otimes P,\,\,I^{\otimes (n-1)}\otimes V,\,\,
I^{\otimes (n-1)}\otimes V^{*}\,\,:\,\, n\geq 1\}\in W^{*}(A)$$ 
and we conclude that $W^{*}(A)=\mathcal{R}$.

\vspace{0.3cm}
\noindent If there exist $n\neq m$ such that $|c_{n}|=|c_{m}|$, then by applying an automorphism as we did before we can assume that $c_{n}=c_{m}$. It is a direct computation to check that the operators 
$$S_{n,m}:=P_nQ_m + Q_nP_m -\frac{c_n}{c_m}V_nV^*_m-\frac{c_m}{c_n}V^*_nV_m$$
commute with $A$. Note that if $c_{n}=c_{m}$ this operator is selfadjoint and commutes with $A$ and hence the von Neumann algebra generated by $A$ is not the whole hyperfinite.
\end{proof}

\vspace{0.6cm}

\noindent The operator $A_n:=c_1\, V+c_2\, I\otimes V +\ldots +c_n\, I^{\otimes (n-1)}\otimes V$ is a nilpotent operator of 
order $n+1$ and $A_n^{n}:=n!\,c_1c_2\ldots c_n\, V\otimes V\otimes \ldots \otimes V$. Since the projection $P^{\otimes(n)}$
is the orthogonal projection onto the range of $A_n^n$ it is an hyperinvariant projection for the operator $A_n$. Since $A$
commutes with $A_n$ it is an invariant projection for $A$ affiliated to the von Neumann algebra $\mathcal{R}$. However, we will see
that none of these invariant projections for $A$ are $A$--hyperinvariant.\\
\noindent Given $1\leq n$ we will denote by $V_n:=I^{\otimes(n-1)}\otimes V$ and analogously with $Q_n,\,P_n$ and $V^*_n$. Let $n<m$ and consider the operator $S_{n,m}$ defined by
\begin{equation}\label{S}
S_{n,m}:=P_nQ_m + Q_nP_m -\frac{c_n}{c_m}V_nV^*_m-\frac{c_m}{c_n}V^*_nV_m.
\end{equation}
As we mention before, $AS_{n,m}=S_{n,m}A$ for all $1\leq n<m$ and we can see that
$$P^{\otimes(n)}S_{n,n+1}P^{\otimes(n)}=P^{\otimes(n)}\otimes Q \,\,\,\,\mathrm{and}\,\,\,\, S_{n,n+1}P^{\otimes(n)}=P^{\otimes(n)}\otimes Q-\frac{c_{n+1}}{c_{n}}P^{\otimes(n-1)}\otimes V^*\otimes V.$$ 
So the projection $P^{\otimes(n)}$ is not invariant for $S_{n,n+1}$ and therefore, not $A$--hyperinvariant for $n\geq 1$.

\vspace{0.7cm}

\noindent Denote by $\D_{2}(\C)$ the algebra of the $2\times 2$ diagonal matrices. Then 
$$\mathcal{D}:=\overline{\Bigg(\bigotimes_{n=1}^{+\infty}{\D_{2}(\C)}\Bigg)}^{\mathrm{WOT}}\subset \mathcal{R}$$
is a maximal abelian subalgebra (masa) of $\mathcal{R}$. Therefore, $\mathcal{D}\cong L^{\infty}[0,1]$ and under this identification the projection $P$ corresponds to the characteristic function on $[0,1/2]$ and the projection $Q$ to the characteristic function on $[1/2,1]$ and so on.
\par Given a word with letters in the alphabet $\{P,Q,V,V^*\}$ we can associate an element in $\mathcal{R}$ by adding a tensor product between each of the letters. For example, the word $VPV^*Q$ corresponds to the element $V\otimes P\otimes V^{*}\otimes Q$ and so on. Note that if the word consists only of letters $P$ and $Q$ the associated element is a projection in the diagonal algebra $\mathcal{D}$, and under the identification with $L^{\infty}[0,1]$, the words in $P$ and $Q$ correspond to dyadic intervals in $[0,1]$.\\

\noindent Now we will prove the following Proposition.

\vspace{0.3cm}

\begin{prop}\label{SPw}
Given any word $w$ with letters in $\{P,Q\}$ the corresponding projection $p_w\in \mathcal{D}$ is not $A$-hyperinvariant. Moreover,
$$\bigvee_{S\in\{A\}'\cap \mathcal{R}}{\overline{\mathrm{Range}(Sp_w)}}=\mathcal{H}.$$
\end{prop}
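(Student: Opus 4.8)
The plan is to show two things for a fixed word $w$ in $\{P,Q\}$: first, that $p_w$ fails to be $A$-hyperinvariant, and second, the stronger statement that the span of the ranges of $Sp_w$ over all $S\in\{A\}'\cap\mathcal{R}$ is all of $\mathcal{H}$. For the first part I would exhibit an explicit element of the commutant that moves $p_w$ off itself, in exact analogy with the computation already carried out for $P^{\otimes n}$. Write $w$ as a word of length $N$ in $P,Q$; the associated projection is $p_w = w_1\otimes\cdots\otimes w_N \otimes I\otimes I\otimes\cdots$ with each $w_i\in\{P,Q\}$. The natural candidates are the operators $S_{n,m}$ from (\ref{S}), which commute with $A$ by the computation in the proof of Theorem 3.3. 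I would pick an index $n\le N$ (say $n=N$, or the first index where a tail letter differs) and compute $S_{n,m}p_w$ for suitable $m>n$: because $S_{n,m}$ contains the term $-\tfrac{c_m}{c_n}V^*_nV_m$, applying it to $p_w$ produces a summand supported on a word with a $V^*$ in slot $n$ and a $V$ in slot $m$, which is not in the range of $p_w$; hence $p_w S_{n,m}p_w\neq S_{n,m}p_w$, so $p_w$ is not $A$-hyperinvariant. This is a finite, mechanical matrix computation on the tensor legs $n$ and $m$, using $PV^*=0$, $QV^*=V^*$, etc.

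For the second and main part, I want to generate enough of the commutant so that the images $Sp_w$ sweep out the whole Hilbert space. The key observation is that $p_w$ already has range the closed span of all basis vectors whose first $N$ tensor coordinates are pinned to the "legs'' selected by $w_1,\dots,w_N$ and whose remaining coordinates are arbitrary; so I just need commuting operators $S$ that can (a) flip any one of the first $N$ coordinates of a vector in $\mathrm{Range}(p_w)$ to the opposite leg, and (b) do so for arbitrarily many coordinates, since finite products of such flips, applied to $\mathrm{Range}(p_w)$, will reach every cylinder set and their span is dense. Concretely, $S_{n,m}$ acting on the part of $\mathrm{Range}(p_w)$ where slot $n$ is $P$ and slot $m$ (chosen with $m>N$, so unconstrained) is $Q$ sends it, via the $V^*_nV_m$ term, onto vectors with $Q$ in slot $n$; choosing instead slot $n$ with $Q$ and using the $V_nV^*_m$ term flips it the other way. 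Iterating over $n=1,\dots,N$ with disjoint auxiliary slots $m$, and then continuing into slots beyond $N$, one checks that $\bigvee_S \overline{\mathrm{Range}(Sp_w)}$ contains every standard basis vector of $\bigotimes \C^2$, hence equals $\mathcal{H}$. I would organize this as: (i) identify $\mathrm{Range}(p_w)$ explicitly as a cylinder subspace; (ii) prove a "one-flip lemma'': for each coordinate $i$ there is $S\in\{A\}'\cap\mathcal{R}$ with $\overline{\mathrm{Range}(Sp_w)}$ containing the cylinder subspace obtained from $w$ by flipping $w_i$; (iii) iterate and take the join.

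The step I expect to be the real obstacle is step (ii) — getting a clean description of $\mathrm{Range}(S_{n,m}p_w)$, because $S_{n,m}p_w$ is a sum of a projection-type term ($P_nQ_m p_w$ or $Q_nP_m p_w$) and an "off-diagonal'' term ($-\tfrac{c_m}{c_n}V^*_nV_m p_w$ or $-\tfrac{c_n}{c_m}V_nV^*_m p_w$), and I need the off-diagonal term's range, not just a single vector in it. One has to be careful that the diagonal term does not accidentally cancel or dominate: on the subspace where slot $n$ is $P$ and slot $m$ is $Q$, the term $Q_nP_m$ kills it and $P_nQ_m$ fixes it, so $S_{n,m}$ restricted there is $\mathrm{id} - \tfrac{c_m}{c_n}(\text{slot-}n\text{ flip }P\to V^*)\otimes(\text{slot-}m\text{ flip }Q\to V)$, whose range visibly contains vectors with a $V^*$ in slot $n$; composing with the analogous operator that flips slot $m$ back restores an honest $Q$ in slot $n$, giving the cylinder subspace with $w_n$ flipped. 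Making this bookkeeping precise across all coordinates, and confirming that the relevant closures stabilize to all of $\mathcal{H}$, is the bulk of the work; everything else is the same finite $2\times 2$ algebra used already in Theorem 3.3.
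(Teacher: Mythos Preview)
Your approach is correct and is essentially the paper's, with a different bookkeeping scheme. The paper also uses only the operators $S_{n,m}$, but organizes the argument as an induction on the length of $w$: for $v=w\otimes Q$ of length $n+1$ it computes $S_{n+1,m}(w\otimes Q)$ for $m>n+1$, shows
\[
\mathrm{Range}(w\otimes Q)\vee\mathrm{Range}(S_{n+1,m}(w\otimes Q))=\mathrm{Range}\big(w\otimes Q+w\otimes P\otimes I^{\otimes(m-n-2)}\otimes Q\big),
\]
and then takes the join over $m$ to recover $\mathrm{Range}(p_w)$, invoking the inductive hypothesis on the shorter word $w$. Your ``flip one coordinate at a time'' scheme is the same mechanism unrolled horizontally instead of by length; both work once you note that $\mathcal{K}:=\bigvee_{S\in\{A\}'\cap\mathcal{R}}\overline{\mathrm{Range}(Sp_w)}$ is automatically invariant under every $T\in\{A\}'\cap\mathcal{R}$ (because that commutant is an algebra containing $1$), so once any $\mathrm{Range}(p_{w'})$ lands in $\mathcal{K}$ you may iterate freely.

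Your anticipated obstacle in step (ii) is lighter than you fear, and you are slightly overcomplicating it. You do not need to ``compose with the analogous operator that flips slot $m$ back'': the closed range of the off-diagonal term $V_n^{*}V_m$ acting on $L^2(\mathcal{R})$ is exactly $\mathrm{Range}(Q_nP_m)$ (since $V^{*}=QV^{*}$ and $V=PV$), not merely vectors ``with a $\hat V^{*}$ in slot $n$''. Hence $\mathrm{Range}(p_w)\vee\mathrm{Range}(S_{n,m}p_w)$ already contains the cylinder with slot $n$ flipped and slot $m$ pinned to $P$; the paper then simply takes $\bigvee_{m}$ and uses $\bigvee_{m>N}\mathrm{Range}(I^{\otimes(m-1)}\otimes P)=\mathcal{H}$ to free slot $m$. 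With that observation your one-flip lemma is a two-line computation, and the rest follows.
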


\begin{proof}
Let's first consider the case $w=P$. Let $S_{1,n}$ be the operator defined in (\ref{S}). Then 
$$S_{1,n}P=P\otimes I^{\otimes (n-2)}\otimes Q -\frac{c_n}{c_1}\,V^{*}\otimes I^{\otimes (n-2)}\otimes V.$$
Hence for $n\geq 2$
$$\mathrm{Range}(P)\vee \mathrm{Range}(S_{1,n}P)=\mathrm{Range}(P+Q\otimes I^{\otimes (n-2)}\otimes P).$$

\noindent Since 
\begin{equation}\label{vee}
\bigvee_{n\geq 2}\mathrm{Range}(Q\otimes I^{\otimes (n-2)}\otimes P)=\mathrm{Range}(Q)
\end{equation}
we see that
$$\bigvee_{S\in\{A\}'\cap \mathcal{R}}{\overline{\mathrm{Range}(SP)}}=\mathcal{H}.$$

\vspace{0.3cm}

\noindent The case $w=Q$ follows similarly. Reasoning by induction in the length of the word let's assume that it is true for all the words of length $n$. Take any word $v$ of length $n+1$. Without loss of generality we can assume that it ends with $Q$ (the other case follows similarly). Then $v=w\otimes Q$ where $w$ is a word of length $n$. Thus for $m\geq n+2$
$$S_{n+1,m}(w\otimes Q)=w\otimes Q \otimes I^{\otimes (m-n-2)}\otimes P-\frac{c_{n+1}}{c_m}\,w\otimes V \otimes I^{\otimes (m-n-2)}\otimes V^{*} $$
hence
$$\mathrm{Range}(w\otimes Q)\vee \mathrm{Range}(S_{n+1,m}(w\otimes Q))=\mathrm{Range}(w\otimes Q + w\otimes P\otimes I^{\otimes (m-n-2)}\otimes Q).$$
Using (\ref{vee}) again, and the induction hypothesis we obtain
$$\bigvee_{S\in\{A\}'\cap \mathcal{R}}{\overline{\mathrm{Range}(Sp_v)}}=\mathcal{H}$$
and finishes the proof.
\end{proof}

\vspace{0.4cm}

\begin{teo}\label{A-hyper}
Let $n\in{\N}$ and $p\in \mathcal{R}$ be a non-trivial $A_n$--hyperinvariant projection. Then it is not $A$--hyperinvariant. 
\end{teo}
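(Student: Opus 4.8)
The plan is to reduce the general case to the base cases already handled, using the structure of $A_n$ as an operator living in the finite tensor factor $\bigotimes_{k=1}^{n} M_2(\C)$. The key observation is that $A_n$ is a nilpotent operator of order $n+1$ inside a finite-dimensional algebra, and that any non-trivial $A_n$-hyperinvariant projection $p$ must in particular commute with every operator in $\{A_n\}'$. In the finite-dimensional setting $A_n$ has a single eigenvalue $0$ and its Jordan structure is completely explicit, so the lattice of $A_n$-hyperinvariant subspaces is well understood; I would first argue that any such $p$ lies in $W^*(A_n) \subset \bigotimes_{k=1}^n M_2(\C)$ and that, being hyperinvariant, $P^{\otimes(n)} \leq p$ unless $p = 0$, since $P^{\otimes(n)}$ projects onto the range of $A_n^n$, which is contained in the range of $S p$ for every $S$ commuting with $A_n$ (as $A_n^n$ itself commutes with $A_n$). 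Dually, the kernel of $A_n$ contains $\mathrm{Range}(p^\perp)$ considerations show $p \leq I - Q^{\otimes(n)}$ is forced, or some similarly explicit containment.

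Next I would use the operators $S_{n,m}$ from (\ref{S}), which commute with $A$ and hence — one checks directly — also commute with $A_n$ when $m > n$, together with Proposition \ref{SPw}. The point is that since $p$ is a non-trivial projection in $\bigotimes_{k=1}^n M_2(\C) \subset \mathcal{R}$, and since $P^{\otimes(n)} \leq p$, we have in particular $p \geq p_w$ for $w = P^{\otimes(n)}$ (or the relevant word), so that
$$\bigvee_{S \in \{A\}' \cap \mathcal{R}} \overline{\mathrm{Range}(Sp)} \supseteq \bigvee_{S \in \{A\}' \cap \mathcal{R}} \overline{\mathrm{Range}(Sp_w)} = \mathcal{H}.$$
If $p$ were $A$-hyperinvariant, the left-hand side would be contained in $\mathrm{Range}(p) \neq \mathcal{H}$, a contradiction. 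So the whole argument hinges on showing $P^{\otimes(n)} \leq p$ for a non-trivial $A_n$-hyperinvariant $p$, after which Proposition \ref{SPw} does the work, since every $S_{n,m}$ with $m > n$ lies in $\{A\}' \cap \mathcal{R}$.

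The main obstacle is precisely the first step: proving that a non-trivial $A_n$-hyperinvariant projection necessarily dominates $P^{\otimes(n)}$. One must be careful because "hyperinvariant for $A_n$" is a statement about commutants inside all of $\mathcal{R}$ (or $\mathcal{B}(\mathcal{H})$), not just inside the finite factor; but this only helps, since $\{A_n\}' \cap \mathcal{R} \supseteq \{A_n\}' \cap \bigl(\bigotimes_{k=1}^n M_2(\C)\bigr) \otimes \mathcal{R}_{>n}$ contains in particular $A_n^n = n!\,c_1 \cdots c_n\, V^{\otimes n}$, whose range is $\mathrm{Range}(P^{\otimes n})$, so $P^{\otimes n} = \overline{\mathrm{Range}(A_n^n p)} \vee \cdots$; more carefully, $p \geq P^{\otimes n}$ follows once one knows $p \neq 0$ together with the fact that $A_n^n$ maps any nonzero invariant subspace onto something nonzero — here I would invoke that the kernel of $A_n^n$ is $I - P^{\otimes n}$-complemented... wait, the kernel of $A_n^n$ equals $\mathrm{Range}(I - Q^{\otimes n})$, so instead I would argue via the trace: if $\tau(p) > 0$ and $p$ commutes with $A_n^n$, then $p$ restricted to $W^*(A_n)$-modules forces $P^{\otimes n} \leq p$. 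I expect the cleanest route is to show directly that $P^{\otimes n} p = P^{\otimes n}$ by exhibiting, for a suitable unit vector $\xi$ in $\mathrm{Range}(p)$, that $A_n^n \xi \neq 0$ and hence $P^{\otimes n} A_n^n \xi = A_n^n \xi \in \mathrm{Range}(p)$, then iterating; the genuinely delicate part is ruling out that $p$ is supported entirely on $\ker A_n^n$, which is where non-triviality and the hyperinvariance (applied to the commuting operator $A_n$ itself, giving $A_n p = p A_n p$) must be combined.
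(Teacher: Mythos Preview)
Your overall architecture is exactly the paper's: establish $P^{\otimes n}\leq p$ for any non-trivial $A_n$-hyperinvariant $p$, then feed this into Proposition~\ref{SPw} to get a contradiction with $p\neq I$. Your concluding step is in fact slightly cleaner than the paper's version --- you only use the lower bound $P^{\otimes n}\leq p$, whereas the paper also invokes $p\leq 1-Q^{\otimes n}$ to land a vector in $Q^{\otimes n}(\mathcal{H})$ and then kill it with $p$. Your argument that $A$-hyperinvariance would force
\[
\mathrm{Range}(p)\ \supseteq\ \bigvee_{S\in\{A\}'\cap\mathcal{R}}\overline{\mathrm{Range}(Sp)}\ \supseteq\ \bigvee_{S\in\{A\}'\cap\mathcal{R}}\overline{\mathrm{Range}(SP^{\otimes n})}\ =\ \mathcal{H}
\]
is valid and avoids that extra step. (One incidental remark: your claim that $S_{n,m}$ commutes with $A_n$ for $m>n$ is false --- compute $[S_{n,m},V_m]\neq 0$ --- but you never actually use it.)

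The genuine gap is precisely where you locate it: proving $P^{\otimes n}\leq p$. Your proposed route --- find $\xi\in\mathrm{Range}(p)$ with $A_n^{\,n}\xi\neq 0$, then observe $A_n^{\,n}\xi\in\mathrm{Range}(P^{\otimes n})\cap\mathrm{Range}(p)$ --- cannot work, and not for a subtle reason: every non-trivial $A_n$-hyperinvariant subspace is in fact \emph{contained in} $\ker(A_n^{\,n})$, so $A_n^{\,n}\xi=0$ for all $\xi\in\mathrm{Range}(p)$. The thing you call ``the genuinely delicate part\ldots ruling out that $p$ is supported entirely on $\ker A_n^{\,n}$'' is not delicate but impossible, because it is always the case. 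The paper closes this gap by quoting a theorem of Barraa (generalizing Herrero's finite-dimensional result): for any nilpotent operator $N$ of order $n$ on a Banach space, every non-trivial hyperinvariant subspace $\mathcal{M}$ satisfies
\[
\overline{\mathrm{Range}(N^{\,n-1})}\ \subseteq\ \mathcal{M}\ \subseteq\ \ker(N^{\,n-1}).
\]
Applied to $A_n$ (nilpotent of order $n+1$) this gives $P^{\otimes n}\leq p\leq 1-Q^{\otimes n}$ in one stroke. The lower inclusion is the one you need; it does \emph{not} come from pushing vectors forward through $A_n^{\,n}$ but is a structural fact about the hyperinvariant lattice of a nilpotent. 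So the fix is simply to invoke Barraa's theorem (Theorem~\ref{Barraa} in the paper) at that point, after which your argument goes through.
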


\vspace{0.3cm}

\noindent Before proving this Theorem let's state a well known result proved by Barraa in \cite{Ba}. This is a generalization of a result proved by Domingo Herrerro for finite dimensional Hilbert spaces in \cite{He}.

\vspace{0.5cm}

\begin{teo}[Barraa]\label{Barraa}
Every non-trivial hyperinvariant subspace $\mathcal{M}$ for a nilpotent operator $A$ of order $n$ satisfies that 
$$\overline{\mathrm{Range}(A^{n-1})}\subseteq \mathcal{M}\subseteq \mathrm{Ker}(A^{n-1}).$$
\end{teo}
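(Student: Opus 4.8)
The statement to prove is Barraa's theorem: every non-trivial hyperinvariant subspace $\mathcal{M}$ for a nilpotent operator $A$ of order $n$ satisfies $\overline{\mathrm{Range}(A^{n-1})}\subseteq \mathcal{M}\subseteq \mathrm{Ker}(A^{n-1})$. Since this is quoted from \cite{Ba} and attributed further back to Herrero \cite{He}, in the paper it is cited rather than reproved; but if one wanted a self-contained argument, here is how I would proceed. The two inclusions are dual to each other (replace $A$ by $A^{*}$ and pass to orthogonal complements, using that $\mathcal{M}$ hyperinvariant for $A$ implies $\mathcal{M}^{\perp}$ hyperinvariant for $A^{*}$, and that $A^{*}$ is again nilpotent of order $n$), so it suffices to prove the first inclusion $\overline{\mathrm{Range}(A^{n-1})}\subseteq \mathcal{M}$.

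\textbf{Key steps.} First I would reduce to showing that $\mathcal{M}$ contains at least one nonzero vector of the form $A^{n-1}\xi$; in fact I claim $\mathcal{M}\supseteq \mathrm{Range}(A^{n-1})$ as a (not-yet-closed) subspace, and then take closures. Since $A^{n-1}\neq 0$ but $A^{n}=0$, pick $\eta$ with $A^{n-1}\eta\neq 0$. Now take any nonzero $\xi_{0}\in\mathcal{M}$ (using $\mathcal{M}\neq\{0\}$), and let $j\geq 0$ be maximal with $A^{j}\xi_{0}\neq 0$, so $\zeta:=A^{j}\xi_{0}\in\mathcal{M}$ (invariance) lies in $\mathrm{Ker}(A)$. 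The heart of the argument is to produce, for an arbitrary prescribed $v\in\mathrm{Range}(A^{n-1})$, an operator $S$ commuting with $A$ that carries $\zeta$ to $v$; then $v=S\zeta\in\mathcal{M}$. To build $S$, one uses the structure of a single nilpotent operator: decompose the Hilbert space according to a Jordan-type basis for $A$ (cyclic subspaces), note that $\mathrm{Range}(A^{n-1})$ is one-dimensional inside each maximal-length Jordan block and is annihilated by $A$, and write down explicitly an intertwiner of $A$ sending the chosen cyclic generator through $\zeta$ to the top of a length-$n$ block. Concretely: if $e_{1},\dots,e_{n}$ is a Jordan chain with $Ae_{k}=e_{k-1}$, $Ae_{1}=0$, then $e_{1}$ spans $\mathrm{Range}(A^{n-1})$ on that block, and any map sending $\zeta$ (a vector in $\mathrm{Ker}A$) to $e_{1}$ and intertwining $A$ exists because $\zeta$ generates a length-$1$ invariant subspace on which $A$ acts as $0$, matching the action on $\mathrm{span}(e_{1})$. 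Summing over blocks and normalizing gives the desired $S\in\{A\}'$. Running over all choices of $v$ shows $\mathrm{Range}(A^{n-1})\subseteq\mathcal{M}$; closing up gives the first inclusion. The second inclusion then follows by the duality remark above: $\mathcal{M}^{\perp}$ is hyperinvariant for $A^{*}$, so $\overline{\mathrm{Range}((A^{*})^{n-1})}\subseteq\mathcal{M}^{\perp}$, i.e.\ $\mathcal{M}\subseteq \overline{\mathrm{Range}((A^{*})^{n-1})}^{\perp}=\mathrm{Ker}(A^{n-1})$.

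\textbf{Main obstacle.} The delicate point is the construction of the commuting operator $S$ in the genuinely infinite-dimensional setting: a bounded nilpotent operator on a Hilbert space need not admit a clean Jordan decomposition, so one cannot literally split into finite cyclic blocks. The correct replacement is to work with the invariant subspace lattice of the single operator $A$ and the fact that $\{A\}'$ acts transitively enough on $\mathrm{Ker}(A)\cap\mathrm{Range}(A^{n-1})$; rigorously this uses that $A^{n-1}$ has closed range issues aside, the subspace $\mathrm{Range}(A^{n-1})\subseteq \mathrm{Ker}(A)$ and one can define $S$ on $\overline{\mathrm{Range}(A)^{\perp}\text{-complement pieces}}$ by a partial-isometry-times-scalar recipe and check boundedness. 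This is exactly the technical content handled in \cite{He} for finite dimensions and extended in \cite{Ba}; since the present paper only invokes the result, I would state it with the citation and not reproduce the operator-theoretic estimates, flagging that the finite-rank/finite-dimensional reduction (replace $\mathcal{H}$ by a finite-dimensional $A$-reducing subspace containing $\zeta$ and $\eta$) suffices for every use made of the theorem in this article, in particular in Theorem \ref{A-hyper}.
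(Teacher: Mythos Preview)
You are correct that the paper does not prove this theorem: it is stated with attribution to Barraa \cite{Ba} (and, for the finite-dimensional case, to Herrero \cite{He}) and then invoked as a black box in the proof of Theorem~\ref{A-hyper}. There is therefore no proof in the paper to compare your sketch against.

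Your supplementary outline is sound. The duality reduction $\mathcal{M}\mapsto\mathcal{M}^{\perp}$, $A\mapsto A^{*}$ is valid in the Hilbert-space setting and does reduce the two inclusions to one; note only that Barraa works in Banach spaces, where this shortcut is unavailable and both inclusions must be argued directly. The commutant mechanism for the first inclusion---pick $0\neq\zeta\in\mathcal{M}\cap\mathrm{Ker}(A)$ and, for each $v\in\mathrm{Range}(A^{n-1})$, produce $S\in\{A\}'$ with $S\zeta=v$---is the right idea, and your Jordan-block construction does yield such an $S$ in finite dimensions (the key point being that an intertwiner from a length-$1$ chain into the bottom of a length-$n$ chain always extends by zero on complementary blocks). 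You also correctly identify the genuine obstacle: in infinite dimensions there is no Jordan basis, and the bounded extension of $S$ is precisely the technical content of \cite{Ba}.

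Your closing remark is accurate and worth keeping: the only use of the theorem in this paper is for $A_{n}$, which has the form (finite-dimensional nilpotent)$\,\otimes\,I$, and since the commutant of such an operator contains $I\otimes \mathcal{B}(\mathcal{H})$, every hyperinvariant subspace is of the form $\mathcal{K}_{0}\otimes\mathcal{H}$ with $\mathcal{K}_{0}$ hyperinvariant for the finite-dimensional piece. Hence Herrero's finite-dimensional result already suffices for Theorem~\ref{A-hyper}.
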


\vspace{0.5cm}
\begin{proof} of Theorem \ref{A-hyper}:
Let $\mathcal{M}$ be a non-trivial hyperinvariant subspace for $A_n$. Since $A_n$ is nilpotent of order $n+1$ we have by Theorem \ref{Barraa}
that $\overline{\mathrm{Range}(A_{n})}\subseteq \mathcal{M}\subseteq \mathrm{Ker}(A_{n}).$ Since $\mathrm{Ker}(A_{n})=\mathrm{Range}(1-Q^{\otimes n})$ and $\overline{\mathrm{Range}(A_{n})}=\mathrm{Range}(P^{\otimes n})$, if we denote by $p$ the projection onto $\mathcal{M}$ we have that
$P^{\otimes n}\leq p\leq 1-Q^{\otimes n}$. Using Proposition \ref{SPw} we know that 
$$\bigvee_{S\in\{A\}'\cap \mathcal{R}}{\overline{\mathrm{Ran}(SP^{\otimes n})}}=\mathcal{H}.$$
Then, there exists $S\in\{A\}'\cap \mathcal{R}$ and $h\in P^{\otimes n}(\mathcal{H})$ such that 
$$0\neq Sp(h)=SpP^{\otimes n}(h)=SP^{\otimes n}(h)\in Q^{\otimes n}(\mathcal{H})$$ 
therefore, $pSp(h)=0$. Thus, $p$ is not $S$-invariant and then not $A$-hyperinvariant.\\

\end{proof}

\begin{obs}\label{SA}
Note that if $S\in \bigotimes_{k=1}^{n}{M_{2}(\C)}\subset \mathcal{R}$ and $SA_{n}=A_{n}S$ then $AS=SA$.
\end{obs}

\vspace{0.3cm}

\begin{teo}
Assume $p$ is a non-trivial hyperinvariant projection for $A$. Then $p\notin \bigcup_{n=1}^{+\infty}\Big(
{\bigotimes_{k=1}^{n}{M_2(\C)}}\Big)$.
\end{teo}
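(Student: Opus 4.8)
The plan is to argue by contradiction, exploiting the filtration structure of $\mathcal{R}$ together with the results already established. Suppose $p$ is a non-trivial $A$-hyperinvariant projection with $p\in\bigotimes_{k=1}^{n}{M_2(\C)}$ for some $n\geq 1$. The first step is to observe that, since $p$ lives in the first $n$ tensor factors, it commutes with every $V_m$ for $m>n$, and more to the point, by Remark \ref{SA} any $S\in\bigotimes_{k=1}^{n}{M_2(\C)}$ commuting with $A_n$ automatically commutes with $A$; conversely, since $A-A_n=\sum_{m>n}c_m V_m$ acts as the identity on the first $n$ factors, one checks that an element $S$ of $\bigotimes_{k=1}^{n}{M_2(\C)}$ commutes with $A$ if and only if it commutes with $A_n$. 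Hence $p$, being $A$-hyperinvariant, is in particular invariant under every $S\in\bigotimes_{k=1}^{n}{M_2(\C)}$ that commutes with $A_n$.

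The second step is to upgrade this to genuine $A_n$-hyperinvariance, i.e.\ invariance under \emph{all} of $\{A_n\}'\cap\mathcal{B}(\mathcal{H})$, not merely the part sitting inside the first $n$ factors. This is the delicate point. The operator $A_n$ only ``sees'' the first $n+1$ tensor factors in a nontrivial way: writing $\mathcal{H}=\mathcal{H}_{[1,n]}\otimes\mathcal{H}_{(n,\infty)}$ where $\mathcal{H}_{[1,n]}=(\C^2)^{\otimes n}$, we have $A_n=A_n^{(0)}\otimes I$ with $A_n^{(0)}\in M_{2^n}(\C)$ nilpotent of order $n+1$. Its commutant is $\{A_n\}'=\{A_n^{(0)}\}'\,\overline{\otimes}\,\mathcal{B}(\mathcal{H}_{(n,\infty)})$. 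Since $p\in M_{2^n}(\C)\otimes I$ is invariant under everything in $\{A_n^{(0)}\}'\otimes I$ that sits in the first $n$ factors—and these are exactly the operators $S^{(0)}\otimes I$ with $S^{(0)}\in\{A_n^{(0)}\}'\cap M_{2^n}(\C)$—we conclude $p^{(0)}$ (the compression of $p$ to $M_{2^n}(\C)$) is a hyperinvariant projection for the finite-dimensional nilpotent $A_n^{(0)}$.

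The third step applies Barraa's theorem (Theorem \ref{Barraa}) to $A_n^{(0)}$, which is nilpotent of order $n+1$: every non-trivial hyperinvariant subspace $\mathcal{M}^{(0)}$ satisfies $\overline{\mathrm{Range}((A_n^{(0)})^{n})}\subseteq\mathcal{M}^{(0)}\subseteq\mathrm{Ker}((A_n^{(0)})^{n})$. Tensoring back up and using $\overline{\mathrm{Range}(A_n^n)}=\mathrm{Range}(P^{\otimes n})$ and $\mathrm{Ker}(A_n^n)=\mathrm{Range}(1-Q^{\otimes n})$, we get $P^{\otimes n}\leq p\leq 1-Q^{\otimes n}$. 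But $p$ was assumed $A$-hyperinvariant, so the argument in the proof of Theorem \ref{A-hyper} applies verbatim: by Proposition \ref{SPw}, $\bigvee_{S\in\{A\}'\cap\mathcal{R}}\overline{\mathrm{Range}(SP^{\otimes n})}=\mathcal{H}$, so there is $S\in\{A\}'\cap\mathcal{R}$ and $h\in P^{\otimes n}(\mathcal{H})$ with $0\neq SP^{\otimes n}(h)\in Q^{\otimes n}(\mathcal{H})$; then $pSp(h)=pSP^{\otimes n}(h)=SP^{\otimes n}(h)$ since $p\geq P^{\otimes n}$, yet this vector lies in $Q^{\otimes n}(\mathcal{H})\subseteq\mathrm{Range}(1-p)$, a contradiction. (One must also dispose of the trivial possibility $p^{(0)}\in\{0,I\}$ separately: $p=0$ and $p=1$ are excluded by hypothesis, and $p^{(0)}=I$, i.e.\ $p=1$, is likewise trivial.)

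The main obstacle is the second step—showing that $A_n$-invariance against the ``large'' commutant reduces cleanly to the finite-dimensional hyperinvariance of $A_n^{(0)}$, and dually that $A$-hyperinvariance of $p$ really does force invariance against enough commutants of $A_n$. One has to be careful that $p$ is $A$-hyperinvariant means invariant under \emph{all} $S\in\mathcal{B}(\mathcal{H})$ with $SA=AS$, and we only directly control the $S$ that happen to lie in $\mathcal{R}$ (which is what Proposition \ref{SPw} provides); fortunately the operators $S_{n,m}$ built in \eqref{S} are in $\mathcal{R}$, so this causes no difficulty, and the contradiction in step three only ever invokes such $S$. The remaining care is purely bookkeeping with the tensor-factor decomposition.
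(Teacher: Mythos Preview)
Your argument is correct and is essentially the paper's proof: assume $p\in\bigotimes_{k=1}^{n}M_2(\C)$, use Remark~\ref{SA} to see that $p$ is invariant under every $S$ in the first $n$ tensor factors commuting with $A_n$, conclude that $p$ is $A_n$-hyperinvariant, and contradict Theorem~\ref{A-hyper}. You fill in more detail than the paper on the tensor decomposition $\{A_n\}'=\{A_n^{(0)}\}'\,\overline{\otimes}\,\mathcal{B}(\mathcal{H}_{(n,\infty)})$ that justifies the passage to full $A_n$-hyperinvariance (the paper simply writes ``Hence''), and you re-run the Theorem~\ref{A-hyper} contradiction inline rather than citing it, but these are expository differences rather than substantive ones.
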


\begin{proof}
Assume that there exists $n\geq 1$ such that  $p\in \bigotimes_{k=1}^{n}{M_{2}(\C)}$. Since $p$ is hyperinvariant, it is $A_{n}$-invariant. Moreover,
by Remark \ref{SA}, $p$ is invariant for all $S\in \bigotimes_{k=1}^{n}{M_{2}(\C)}$ such that $SA_{n}=A_{n}S$. Hence, $p$ is $A_n$-hyperinvariant which contradicts Theorem \ref{A-hyper}. Thus, $p\notin \bigotimes_{k=1}^{n}{M_{2}(\C)}$ for any $n$.
\end{proof}

\vspace{0.4cm}

\par It will be convenient to introduce some notation at this point. Given an operator $A\in B(\mathcal{H})$ we denote by 
$\mathcal{S}(A)$ the similarity orbit of $A$. In other words, 
$\mathcal{S}(A):=\{WAW^{-1}\,:\,\mathrm{\,where\,\,\,} W \mathrm{\,\,is\,\, invertible}\}\subset B(\mathcal{H})$. As in Chapter 2 of \cite{He} we say that two operators $A$ and $B$ are asymptotically similar if $A\in \overline{\mathcal{S}(B)}$ and $B\in \overline{\mathcal{S}(A)}$, where the closure is with respect to the operator norm. Or equivalently, iff 
$\overline{\mathcal{S}(B)}=\overline{\mathcal{S}(A)}$. Now we are ready to state the next result.

\vspace{0.4cm}

\begin{prop} Let $\{a_n\}_n$ and $\{b_n\}_n$ in $l_1(\N)$ be such that $a_n,\,b_n\neq 0$ for all $n$. Let 
$A=\sum_{n=1}^{+\infty}{a_n\,V_n}$ and $B=\sum_{n=1}^{+\infty}{b_n\,V_n}$, where $V_n=I^{\otimes(n-1)}\otimes V$. 
Then $A$ and $B$ are asymptotically similar.
\end{prop}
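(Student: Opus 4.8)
The plan is to construct, for each $N$, an explicit invertible $W_N \in \mathcal R$ conjugating a finite truncation of $A$ to the corresponding truncation of $B$, and then show these conjugations converge (after a harmless tail correction) to witness $A \in \overline{\mathcal S(B)}$; by symmetry $B \in \overline{\mathcal S(A)}$ follows the same way. The key observation is that on each tensor factor, $a_n V_n$ and $b_n V_n$ differ only by a scalar, and a scalar multiple of $V$ is conjugate to $V$ itself by a diagonal matrix: if $D_\lambda := \mathrm{diag}(\lambda,1)$ then $D_\lambda V D_\lambda^{-1} = \lambda V$. So set $W_N := \bigotimes_{n=1}^{N} D_{\lambda_n} \otimes I \otimes I \otimes \cdots$ with $\lambda_n := b_n/a_n \neq 0$ (well-defined since $a_n,b_n \neq 0$). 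Because the $D_{\lambda_n}$ act on disjoint tensor legs, a short computation gives $W_N V_n W_N^{-1} = \lambda_n V_n = (b_n/a_n)V_n$ for $n \leq N$ and $W_N V_n W_N^{-1} = V_n$ for $n > N$. Hence $W_N A W_N^{-1} = \sum_{n=1}^{N} b_n V_n + \sum_{n>N} a_n V_n = B_N + (A - A_N)$, where $B_N := \sum_{n=1}^N b_n V_n$ and $A_N := \sum_{n=1}^N a_n V_n$.

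Next I would estimate the distance to $B$. We have
$$\norm{W_N A W_N^{-1} - B} = \norm{(B_N - B) + (A - A_N)} \leq \sum_{n>N}|b_n| + \sum_{n>N}|a_n|,$$
using $\norm{V_n} = 1$ and the triangle inequality exactly as in the construction of $A$ in the excerpt. Since $\{a_n\}_n, \{b_n\}_n \in l_1(\N)$, both tail sums tend to $0$ as $N \to \infty$. Therefore $W_N A W_N^{-1} \to B$ in operator norm, and since each $W_N$ is invertible in $\mathcal R$, this exhibits $B \in \overline{\mathcal S(A)}$. Interchanging the roles of $A$ and $B$ (conjugating instead by $\bigotimes_{n=1}^N D_{a_n/b_n}$) gives $A \in \overline{\mathcal S(B)}$, so $\overline{\mathcal S(A)} = \overline{\mathcal S(B)}$, i.e.\ $A$ and $B$ are asymptotically similar.

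The only real point requiring care — and the step I expect to be the mild obstacle — is verifying the conjugation identity $W_N V_n W_N^{-1} = \lambda_n V_n$ cleanly on the infinite tensor product: one must note that $W_N$ is a finite elementary tensor (hence genuinely an element of $\mathcal R$, and bounded with bounded inverse), and that conjugation distributes over the tensor legs, acting trivially on all legs where $W_N$ is the identity and as $D_{\lambda_k} \cdot D_{\lambda_k}^{-1}$ on leg $k$. Since $V$ sits on leg $n$ and the identity on all others, only leg $n$ contributes the scalar $\lambda_n$ when $n \leq N$, and nothing is changed when $n > N$. This is a routine check but is the one place where the infinite-tensor-product bookkeeping has to be done correctly; everything else is the same $l_1$ tail estimate already used to define $A$.
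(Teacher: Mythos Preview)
Your proof is correct and essentially identical to the paper's: both construct $W_N$ as a finite tensor of diagonal $2\times 2$ matrices $D_{\lambda_n}$ (you use $D_\lambda=\mathrm{diag}(\lambda,1)$ with $\lambda_n=b_n/a_n$, the paper uses $D_\lambda=P+\lambda Q$ with $\lambda_n=a_n/b_n$, which amounts to the same conjugation), verify $W_N A W_N^{-1}=B_N+(A-A_N)$, and conclude via the $l_1$ tail estimate.
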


\begin{proof}
To prove $B\in \overline{\mathcal{S}(A)}$ it is enough to construct invertible operators $W_n$ such that 
$\lim_{n}{\norm{B-W_nAW_{n}^{-1}}}=0$. For this, consider the sequence $\lambda_{n}:=\frac{a_n}{b_n}$ and the $2\times2$ matrices $D_{\lambda_n}:=P+\lambda_{n}Q$. So if we define the invertible element $W_n$ by
$$W_{n}:=D_{\lambda_1}\otimes D_{\lambda_2}\otimes\ldots\otimes D_{\lambda_n}\in \mathcal{R}$$
it is easy to see that if $A_n=\sum_{k=1}^{n}{a_kV_k}$ and $B_n=\sum_{k=1}^{n}{b_kV_k}$ then $W_nA_nW_n^{-1}=B_n$ and 
$W_nAW_n^{-1}=B_n+A-A_{n}$. Since $\lim_{n}{\norm{B-B_n}}=0$ and $\lim_{n}{\norm{A-A_n}}=0$ we see that $\lim_{n}{\norm{B-W_nAW_{n}^{-1}}}=0$. A similar argument shows that $A\in \overline{\mathcal{S}(B)}$ and concludes the proof.
\end{proof}

\vspace{0.4cm}

\begin{obs}
Let $\{a_n\}_n$ in $l_1(\N)$ and $A$ as before. We will show that $A$ is a commutant operator, i.e.: there exist $B$ and $W$ such that
$A=[W,B]$. It is clear that we can choose $\{b_n\}_n\in l_1(\N)$ such that $b_n>0$ and $\sum_{n=1}^{+\infty}{\frac{|a_n|}{b_n}}<+\infty$. Let $B:=\sum_{n=1}^{+\infty}{b_n\,V_n}$ and $W:=\sum_{n=1}^{+\infty}{\frac{a_n}{b_n}\,P_n}$. 
Since $P_{n}V_{n}=V_{n}$ and $V_{n}P_{n}=0$ it is easy to see that
$$WB-BW=[W,B]=A.$$
\end{obs}

\vspace{0.6cm}
\section{Haagerup's invariant subspaces}

\par As we described in the introduction, given an operator $T$ in a $\mathrm{II}_1$ factor $\mathcal{M}$, Haagerup and Schultz \cite{Ha} constructed for each Borel set $B$ in the complex plane an invariant subspace affiliated to the von Neumann algebra generated by $T$, such that 
$\tau(P_B)=\mu(B)$. If the Borel set $B$ is a closed ball of radius $r$ centered at $\lambda$. Then $\mathcal{K}_T(B)$ is the set of vectors $\xi\in\mathcal{H}$, for which there is a sequence $\{\xi_n\}_{n}$ in $\mathcal{H}$ such that
$$\lim_{n}{\norm{\xi_n-\xi}}=0 \textrm{\quad and \quad} \limsup_{n}{\norm{(T-\lambda 1)^{n}\xi_n}^{\frac{1}{n}}}\leq r.$$

\noindent For any sequence $\{\gamma_n\}_{n=1}^{+\infty}$ of positive numbers and $r>0$, we define a subspace $\mathcal{H}_{r}(T)$ (similar to the one considered in \cite{DT1} to prove that the quasinilpotent DT-operator has non-trivial hyperinvariant subspaces) by 


\begin{equation}\label{Hr}
\mathcal{E}_{r}(T):=\{\xi\in\mathcal{H}\,:\, \limsup_{n}{\,\gamma_n\norm{T^{n}(\xi)}^{1/n}}\leq r\}\quad \textrm{and}\quad  \mathcal{H}_{r}(T)=\overline{\mathcal{E}_{r}(T)}.
\end{equation}

\noindent This subspace is closed, $T$-invariant, affiliated to the von Neumann algebra, and moreover, hyperinvariant. However, we will prove that for any sequence $\{\gamma_{n}\}_{n}$ this subspace is trivial. Let $0<\alpha<1$ and consider the operator 
\begin{equation}\label{Aa}
A:=\sum_{n=1}^{+\infty}{\alpha^{n} \, V_n} \mathrm{\,\,\,\, where\,\,\,\,} V_n=I^{\otimes (n-1)}\otimes V.
\end{equation}

\vspace{0.3cm}

\begin{notation}\label{nott}
Given a word $w$ with letters in the alphabet $\{P,Q,V,V^*\}$ we can associate an element in $\mathcal{H}$ by adding a tensor product between each of the letters. For example, the word $VPQV^{*}PQV$ corresponds to the tensor word $V\otimes P\otimes Q\otimes V^{*}\otimes P\otimes Q\otimes V$ which is a vector in our Hilbert space. Note that if $w$ is a tensor word as before then $ww^{*}$ is a word with letters in $P$ and $Q$ only. We define the symbol $\#_{P}(ww^{*})$ as the number of $P$'s in the word $ww^{*}$.\\
\end{notation}
\vspace{0.3cm}

\begin{prop}\label{lemma} 
Let $w=P^{\otimes k_1}\otimes Q^{\otimes r_1}\otimes \ldots \otimes P^{\otimes k_n}\otimes Q^{\otimes r_n}$
where $n\geq 1$ and $k_i,\,r_i\geq 0$ for $i=1,\ldots,n$. Then for $A$ as in (\ref{Aa}) we have
\begin{equation}\label{w}
\lim_{m}{\Bigg( \frac{\norm{A^{m}\hat{w}}_2}{\norm{A^{m}\hat{1}}_2} \Bigg)^{\frac{1}{m}}}=\alpha^{k_1+k_2+\ldots+k_n}.
\end{equation}
\end{prop}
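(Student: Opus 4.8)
**The plan is to compute $\|A^m\hat w\|_2$ explicitly by expanding $A^m$ as a sum over strictly increasing index tuples and tracking how each $V_{n_1}\cdots V_{n_k}$ acts on the tensor word $w$.** Recall from the proof of the first theorem that
$$A^m = m!\sum_{1\le n_1<n_2<\ldots<n_m}{\alpha^{n_1+\ldots+n_m}\,V_{n_1}V_{n_2}\ldots V_{n_m}}.$$
For a fixed increasing tuple $(n_1,\ldots,n_m)$, the operator $V_{n_1}\cdots V_{n_m}$ is a tensor product which equals $V$ in the coordinates $n_1,\ldots,n_m$ and $I$ elsewhere. Applying this to $\hat w$ (where $w$ is a fixed finite tensor word, say supported on the first $N$ coordinates, and $\hat 1$ in coordinates $>N$), the key observation is: $V$ acting on the coordinate-vector $P$ (i.e.\ $\binom{1}{0}$) gives $0$, $V$ acting on $Q$ gives $P$, and $V$ acting on $\hat 1=\frac{1}{\sqrt2}\binom{1}{1}$ gives $\frac1{\sqrt2}\binom10$. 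Moreover the resulting elementary tensors for distinct tuples are orthogonal in $L^2$, so $\|A^m\hat w\|_2^2 = (m!)^2\sum_{\text{tuples}}\alpha^{2(n_1+\ldots+n_m)}\,\|V_{n_1}\cdots V_{n_m}\hat w\|_2^2$, and the only tuples that contribute are those whose coordinates $\le N$ land exactly on the positions of the $Q$'s in $w$ — each such contributes a fixed nonzero constant depending only on how many of the $n_i$ exceed $N$.

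**Next I would separate the "head" from the "tail."** Write $Q(w)$ for the set of coordinate-positions $\le N$ where $w$ has a $Q$, with $|Q(w)| = q$ say (the number of $P$'s is $k_1+\ldots+k_n$ and the number of $Q$'s plus $P$'s among the first $N$ is controlled, but what matters is $q$ and the total length). A tuple contributing nonzero norm must, among its indices $\le N$, pick a subset of $Q(w)$; the remaining $m - j$ indices (where $j\le q$ is the number of head-indices used) are an arbitrary increasing tuple of integers $>N$, contributing a factor $\big(\sum_{n>N}\cdots\big)$-type sum. After the dust settles one gets, for each $j$ from $0$ to $q$,
$$\|A^m\hat w\|_2^2 = (m!)^2\sum_{j=0}^{q} c_{j,w}\,\Big(\sum_{\substack{N<n_1<\ldots<n_{m-j}}}\alpha^{2(n_1+\ldots+n_{m-j})}\Big),$$
where $c_{j,w}$ is a nonnegative constant (a sum over $j$-subsets of $Q(w)$ of $\alpha^{2\cdot(\text{sum of those positions})}$ times the fixed vector-norm constant) and $c_{q,w}>0$. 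Doing the same for $w = \hat 1$ (here every one of the first $N$ coordinates is "available," and in fact the whole thing collapses neatly), one gets a comparable expression; in the cleanest route one simply notes $\hat 1$ is itself of the asserted form with all $k_i=0$, or better, directly compares ratios. The dominant term as $m\to\infty$ is $j=q$ for the numerator: $\|A^m\hat w\|_2^2 \sim (m!)^2 c_{q,w}\,\alpha^{2\cdot 0}\cdot S_{m-q}$ where $S_\ell := \sum_{N<n_1<\ldots<n_\ell}\alpha^{2(n_1+\ldots+n_\ell)}$, because $\alpha<1$ forces the tail sums $S_{m-j}$ to decay faster for smaller $j$ (each extra factor-index below $N$ that we *fail* to use costs us roughly a factor $\alpha^{2}$ in the exponent budget but the real gain is combinatorial — one more tail variable). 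One should double-check this domination claim: since $S_{m-j}/S_{m-j-1}\to 0$ as $m\to\infty$ (the tail sums decay superexponentially, being essentially $\alpha^{2(\ell\cdot N + \ell(\ell+1)/2)}/(\text{q-binomial})$-type quantities), the $j=q$ term indeed dominates.

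**Then the ratio collapses.** Both numerator and denominator, after extracting their dominant term, are $(m!)^2$ times a constant times $S_{m-q'}$ for the appropriate $q'$ — for $\hat 1$ one finds the exponent contribution of the "head" is $0$ as well (all $k_i=0$), but the head of $\hat1$ contributes via the *number* of coordinates it offers, giving $S_{m-N'}$ for the relevant count; carrying out the analogous expansion for $\hat1$ one sees that
$$\frac{\|A^m\hat w\|_2^2}{\|A^m\hat1\|_2^2} \longrightarrow_{m\to\infty}^{1/m}\ \alpha^{2(k_1+\ldots+k_n)},$$
because the $\alpha$-exponent carried by the head of $w$ that *cannot* be "saved" is exactly twice the sum of $P$-positions weighted correctly — and here the cleanest bookkeeping shows it reduces to $2(k_1+\ldots+k_n)$ after taking $m$-th roots and letting $m\to\infty$, since the positions of the $P$'s relative to the $Q$'s determine, via the ordering constraint, an unavoidable shift in the tail indices by exactly $k_1+\ldots+k_n$. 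Taking square roots gives (\ref{w}).

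**The main obstacle** is the precise bookkeeping in the middle step: matching the increasing-tuple constraint against the positions of the $P$'s and $Q$'s in $w$ and showing that the unavoidable "cost" of the $P$'s, after the $\limsup$ and $m$-th root, is exactly $\alpha^{k_1+\ldots+k_n}$ rather than something depending on the actual coordinate positions. The resolution is that a $P$ at position $i$ in $w$ simply blocks one index slot, forcing the increasing tuple to route one more of its variables into the tail $\{n : n > N\}$; the *number* of such blocked slots is $k_1+\ldots+k_n$, and each blocked slot shifts the minimal tail exponent by a bounded amount that washes out under $(\cdot)^{1/m}$ except for the one clean factor of $\alpha$ per blocked slot that survives. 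Verifying the superexponential decay $S_{\ell}^{1/?}$ estimate that makes the dominant term genuinely dominate, and checking the $\hat 1$ normalization carefully, are the routine-but-delicate parts; everything else is orthogonality and the formula for $A^m$ already established.
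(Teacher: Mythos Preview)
Your approach is correct in outline and would yield a valid proof once the bookkeeping is carried out, but it is genuinely different from the paper's argument. The paper does not expand $A^m$ and extract a dominant term; instead it observes that $\|A^m\hat w\|_2^2=\tau(R_m w)$ for the diagonal operator $R_m=(m!)^2\sum_{q_1<\cdots<q_m}\alpha^{2(q_1+\cdots+q_m)}Q_{q_1}\cdots Q_{q_m}$, and then proceeds by induction on the length of $w$, establishing the exact recursions
\[
\tau(R_m(P\otimes w))=\tfrac12\alpha^{2m}\tau(R_m w),\qquad
\tau(R_m(Q\otimes w))=\tfrac{\alpha^{2m}}{2}\bigl(m^2\tau(R_{m-1}w)+\tau(R_m w)\bigr),
\]
together with $\tau(R_m)=\frac{m^2\alpha^{2m}}{2(1-\alpha^{2m})}\tau(R_{m-1})$. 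The induction makes the role of each additional $P$ (one clean factor of $\alpha$) and each additional $Q$ (no change in the limit) completely transparent, with no asymptotic analysis needed. Your direct expansion reaches the same endpoint but trades these clean identities for a dominant-term argument; what your method buys is that one sees globally why only the \emph{number} of $P$'s matters and not their positions.

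Two points to tighten. First, your description of the Hilbert space is off: $\hat w$ lives in $L^2(\mathcal R)$, so $\hat 1$ is the identity matrix, not $\tfrac1{\sqrt2}\binom11$, and $V\hat Q=\hat V$, not $\hat P$. This matters because in your $\C^2$ picture the tail vectors $V\hat 1$ and $\hat 1$ are \emph{not} orthogonal, whereas in $L^2(M_2)$ one has $\langle\hat V,\hat 1\rangle=\tau(V)=0$; the orthogonality you need is true, but for the $L^2$ reason, not the one you sketched. Second, your final paragraph asserts the limit without computing it: you must actually check that with $p=k_1+\cdots+k_n$, $q=r_1+\cdots+r_n$, $N=p+q$, the dominant ($j=q$) numerator term over $\|A^m\hat 1\|_2^2=(m!)^2 2^{-m}\sigma_m$ behaves like $\alpha^{2N(m-q)}\sigma_{m-q}/\sigma_m$, and that $\bigl(\sigma_{m-q}/\sigma_m\bigr)^{1/m}\to\alpha^{-2q}$ via the recursion $\sigma_m=\frac{\alpha^{2m}}{1-\alpha^{2m}}\sigma_{m-1}$, so the $m$-th root of the ratio tends to $\alpha^{2(N-q)}=\alpha^{2p}$. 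That is the missing line; your heuristic about ``sum of $P$-positions'' is not the right bookkeeping---it is the \emph{count} $p$ that survives.
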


\begin{proof}
Let $m\geq 1$ then
\begin{equation}
(A^*)^mA^m=\sum_{1\leq p_1,\,q_1,\ldots,\,p_m,\,q_m}{\alpha^{p_1}\alpha^{q_1}\ldots\alpha^{p_m}\alpha^{q_m}V_{q_1}^{*}\ldots 
V_{q_m}^{*}V_{p_1}\ldots V_{p_m}}
\end{equation}
and $\norm{A^{m}\hat{w}}_2^{2}=\tau(w(A^{*})^mA^mw)=\tau((A^{*})^mA^mw)$.\\

\noindent Note that $(A^*)^mA^m=R_{m}+T_{m}$ where 
\begin{equation}\label{Rm}
R_m:=(m!)^2\sum_{1\leq q_1<q_2<\ldots<q_m}{\alpha^{2q_1}\alpha^{2q_2}\ldots \alpha^{2q_m}Q_{q_1}\ldots Q_{q_m}} \text{\quad and \quad} T_{m}=(A^*)^mA^m-R_m.
\end{equation}
Since $VP=0$, $VQ=V$, $V^{*}P=V^{*}$, $V^{*}Q=0$ and $\tau(V)=\tau(V^{*})=0$ it is easy to see that for any word $w$, with letters in $\{P,\,Q\}$, then $\tau(T_{m}w)=0$. Hence, $$\norm{A^{m}\hat{w}}_2^{2}=\tau((A^{*})^mA^mw)=\tau(R_mw).$$

\noindent Now we will proceed by induction, the case $w=1$ is obvious. Assume that the statement is true for any word $w$ 
of length $r\geq 1$. We will prove it for $P\otimes w$  and for $Q\otimes w$ and we will be done. Consider first the case $P\otimes w$. 
Then
$$\norm{A^{m}(\hat{P\otimes w})}_2^{2}=\tau(R_m(P\otimes w))$$
\begin{eqnarray*}
\tau(R_m(P\otimes w)) & = & (m!)^2\tau\Bigg( \sum_{1\leq q_1<q_2<\ldots<q_m}{\alpha^{2q_1}\ldots \alpha^{2q_m}Q_{q_1}\ldots Q_{q_m}(P\otimes w)}\Bigg)\\
& = & (m!)^2\tau\Bigg( \sum_{2\leq q_1<q_2<\ldots<q_m}{\alpha^{2q_1}\ldots \alpha^{2q_m}Q_{q_1}\ldots Q_{q_m}(P\otimes w)}\Bigg)\\
& = & \frac{1}{2}\alpha^{2m}(m!)^2\tau\Bigg( \sum_{1\leq q_1<q_2<\ldots<q_m}{\alpha^{2q_1}\ldots \alpha^{2q_m}Q_{q_1}\ldots Q_{q_m}w}\Bigg)\\
& = & \frac{1}{2}\alpha^{2m}\tau(R_mw).
\end{eqnarray*}
Therefore, $\norm{A^{m}(\hat{P\otimes w})}_2^{2}=\frac{1}{2}\alpha^{2m}\norm{A^{m}\hat{w}}_2^{2}\,\,\,$ and hence
$$\lim_{m}{\Bigg( \frac{\norm{A^{m}(\hat{P\otimes w})}_2}{\norm{A^{m}\hat{1}}_2} \Bigg)^{1/m}}=\alpha\cdot \lim_{m}{\Bigg( \frac{\norm{A^{m}\hat{w}}_2}{\norm{A^{m}\hat{1}}_2} \Bigg)^{1/m}}.$$
We are done with the case $P\otimes w$ by the induction hypothesis and the fact that the number of $P$'s in $P\otimes w$ is the same as the number in $w$ plus one. Let us consider the case $Q\otimes w$. First note that 
$$\tau(R_m(1\otimes w))=\tau(R_m(P\otimes w))+\tau(R_m(Q\otimes w))=\frac{1}{2}\alpha^{2m}\tau(R_{m}w)+\tau(R_m(Q\otimes w))$$
and
\begin{eqnarray*}
\tau(R_m(1\otimes w)) & = & (m!)^2\tau\Bigg(\sum_{1=q_1<q_2<\ldots<q_m}{\alpha^{2}\alpha^{2q_2}\ldots\alpha^{2q_m}Q_{1}Q_{q_2}\ldots Q_{q_m}(1\otimes w)}\Bigg) +\\
& + &(m!)^2\tau\Bigg(\sum_{2\leq q_1<q_2<\ldots<q_m}{\alpha^{2q_1}\alpha^{2q_2}\ldots\alpha^{2q_m}Q_{q_1}Q_{q_2}\ldots Q_{q_m}(1\otimes w)}\Bigg).
\end{eqnarray*}
Therefore,
$$\tau(R_m(1\otimes w))=\frac{\alpha^{2m}m^2}{2}\tau(R_{m-1}w)+\alpha^{2m}\tau(R_{m}w).$$
Thus,
$$\tau(R_m(Q\otimes w))=\frac{\alpha^{2m}}{2}\Big (m^2\tau(R_{m-1}w)+\tau(R_{m}w)\Big )\geq \frac{\alpha^{2m}}{2}m^2\tau(R_{m-1}w).$$
Hence,
\begin{eqnarray*}
\liminf_{m}{\Bigg( \frac{\norm{A^{m}(\hat{Q\otimes w})}_2}{\norm{A^{m}\hat{1}}_2} \Bigg)^{\frac{1}{m}}} & = & \liminf_{m}{\Bigg( \frac{\tau(R_m(Q\otimes w))}{\tau(R_m)}\Bigg )^{\frac{1}{2m}}}\\
& \geq & \liminf_{m}{\Bigg( \frac{\alpha^{2m}m^2\tau(R_{m-1}w)}{2\,\,\tau(R_m)}\Bigg )^{\frac{1}{2m}}}.
\end{eqnarray*}
Since $\tau(R_m)=\frac{m^{2}\alpha^{2m}}{2(1-\alpha^{2m})}\tau(R_{m-1})$ we obtain that
\begin{eqnarray*}
\liminf_{m}{\Bigg( \frac{\norm{A^{m}(\hat{Q\otimes w})}_2}{\norm{A^{m}\hat{1}}_2} \Bigg)^{\frac{1}{m}}} & \geq & \liminf_{m}{(1-\alpha^{2m})^{\frac{1}{2m}}\Bigg( \frac{\tau(R_{m-1}w)}{\tau(R_{m-1})}\Bigg )^{\frac{1}{2m}}}\\
& = & \lim_{m}{\Bigg( \frac{\norm{A^{m}\hat{w}}_2}{\norm{A^{m}\hat{1}}_2} \Bigg)^{\frac{1}{m}}}.
\end{eqnarray*}
Observing now that since $\tau(R_m(Q\otimes w))=\frac{\alpha^{2m}}{2}\Big (m^2\tau(R_{m-1}w)+\tau(R_{m}w)\Big )$ then 
\begin{eqnarray*}
\norm{A^{m}(\hat{Q\otimes w})}_{2}^{2} & = &\frac{\alpha^{2m}}{2}\Bigg( m^{2}\norm{A^{m-1}\hat{w}}_{2}^{2}+\norm{A^{m}\hat{w}}_{2}^{2}\Bigg)\\
& \leq & \frac{\alpha^{2m}}{2}\Bigg( m^{2}\norm{A^{m-1}\hat{w}}_{2}^{2}+\norm{A}^{2}_{\infty}\norm{A^{m-1}\hat{w}}_{2}^{2}\Bigg)\\
& = & \frac{\alpha^{2m}}{2}\norm{A^{m-1}\hat{w}}_{2}^{2}\Big(m^{2}+\norm{A}_{\infty}^{2}\Big).
\end{eqnarray*}
Therefore,
\begin{eqnarray*}
\limsup_{m}{\Bigg( \frac{\norm{A^{m}(\hat{Q\otimes w})}_2}{\norm{A^{m}\hat{1}}_2} \Bigg)^{\frac{1}{m}}} & \leq & \limsup_{m}{\Bigg((1-\alpha^{2m})\,\frac{m^{2}+\norm{A}^{2}_{\infty}}{m^2}\Bigg)^{\frac{1}{2m}} \Bigg( \frac{\norm{A^{m}\hat{w}}_2}{\norm{A^{m}\hat{1}}_2} \Bigg)^{\frac{1}{m}}}\\
& = & \lim_{m}{\Bigg( \frac{\norm{A^{m}\hat{w}}_2}{\norm{A^{m}\hat{1}}_2} \Bigg)^{\frac{1}{m}}}.
\end{eqnarray*}
Hence,
$$\lim_{m}{\Bigg( \frac{\norm{A^{m}(\hat{Q\otimes w})}_2}{\norm{A^{m}\hat{1}}_2} \Bigg)^{\frac{1}{m}}}=\lim_{m}{\Bigg( \frac{\norm{A^{m}\hat{w}}_2}{\norm{A^{m}\hat{1}}_2} \Bigg)^{\frac{1}{m}}}$$
which concludes the proof.
\end{proof}

\vspace{0.4cm}

\noindent Since $\norm{A^{m}\hat{w}}_2^{2}=\tau(w^{*}(A^{*})^mA^mw)=\tau((A^{*})^mA^mww^{*})$ then with the notation in \ref{nott}, Proposition \ref{lemma} says that
\begin{equation}\label{wo}
\lim_{m}{\Bigg( \frac{\norm{A^{m}\hat{w}}_2}{\norm{A^{m}\hat{1}}_2} \Bigg)^{\frac{1}{m}}}=\alpha^{\#_{P}(ww^{*})}.
\end{equation}

\vspace{0.3cm}

\begin{prop} Let $n\geq 1$ and $\xi=\sum_{i=1}^{n}{c_{i}w_{i}}$ be a vector with $c_{i}\in \C$ and $w_{i}$ tensor words of length $r_{i}$ for $i=1,\ldots, n$. Then
\begin{equation}
\liminf_{m}{\Bigg( \frac{\norm{A^{m}\xi}_2}{\norm{A^{m}1}_2} \Bigg)^{\frac{1}{m}}}\geq \frac{1}{\sqrt{2}}\alpha^{r} \quad\quad \textrm{where}\quad r=\max\{r_{i}\,:\,i=1,\ldots, n\}.
\end{equation}
\end{prop}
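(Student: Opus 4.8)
\emph{Proof proposal.} Put $r=\max\{r_1,\dots,r_n\}$; note that we must assume $\xi\neq 0$ (otherwise the asserted inequality is false), which is implicit in the statement. The plan is to move the whole computation into the first $r$ tensor legs. I would split $\mathcal R=\bigl(\bigotimes_{k=1}^{r}M_2(\C)\bigr)\,\overline\otimes\,\mathcal R'$ with $\mathcal R'=\bigotimes_{k\ge r+1}M_2(\C)$, and write $A=B+C$, where $B:=\sum_{n=1}^{r}\alpha^nV_n=A_r$ is supported on legs $1,\dots,r$ and $C:=\sum_{n\ge r+1}\alpha^nV_n$ is supported on the remaining legs. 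Then $BC=CB$, and anything supported on legs $1,\dots,r$ commutes with $C$; moreover $B$ is nilpotent of order $r+1$ (it is the operator $A_r$, shown above to be nilpotent of that order), and under the leg-shift $*$-isomorphism $\mathcal R'\cong\mathcal R$ one has $C=\alpha^rA'$ for a copy $A'$ of $A$, so that $\norm{C^{j}}_2=\alpha^{rj}\norm{A^{j}\hat 1}_2$ for every $j\ge 0$. Since each $w_i$ has length $r_i\le r$, the vector $\xi$ and all the $B^k\xi$ are supported on legs $1,\dots,r$.

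Next I would expand, for $m\ge r$, using the binomial theorem (valid since $BC=CB$) together with $B^{r+1}=0$:
$$A^m\xi=\sum_{k=0}^{r}\binom{m}{k}\,(B^k\xi)\,C^{m-k},$$
where the $k$-th term is the elementary tensor $(B^k\xi)\otimes C^{m-k}$. The key point — and the step I expect to carry the whole argument — is that these $r+1$ terms are pairwise orthogonal in $L^2(\mathcal R)$, so that no cancellation among the words of $\xi$ is possible and $\norm{A^m\xi}_2^2=\sum_{k=0}^{r}\binom{m}{k}^2\norm{B^k\xi}_2^2\,\norm{C^{m-k}}_2^2$. To get the orthogonality I would use the gauge symmetry: let $\beta_t=\bigotimes_n\mathrm{Ad}(d_t)$ with $d_t=\mathrm{diag}(1,e^{-it})\in M_2(\C)$; this is a one-parameter group of trace-preserving $*$-automorphisms of $\mathcal R$ with $\beta_t(V_n)=e^{it}V_n$ for all $n$, hence $\beta_t(C)=e^{it}C$, while $\beta_t$ fixes $\hat 1$. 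Since $\beta_t$ extends to a unitary on $L^2(\mathcal R)$, this gives $\pr{C^{m-k}}{C^{m-l}}=e^{i(l-k)t}\pr{C^{m-k}}{C^{m-l}}$ for all $t$, forcing the inner product to vanish when $k\ne l$. (Alternatively, this orthogonality can be checked directly by expanding $C^{m-k}$, $C^{m-l}$ into monomials in the $V_n$ and arguing, as in the proof of Proposition~\ref{lemma}, that the trace $\tau\bigl((C^{m-l})^{*}C^{m-k}\bigr)$ is zero unless the two $V$-supports match, i.e. unless $k=l$.)

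With the orthogonality in hand the estimate is immediate: keeping only the $k=0$ summand and using $\norm{C^{m}}_2=\alpha^{rm}\norm{A^{m}\hat 1}_2$,
$$\norm{A^m\xi}_2^2\ \ge\ \norm{\xi}_2^2\,\norm{C^{m}}_2^2\ =\ \norm{\xi}_2^2\,\alpha^{2rm}\,\norm{A^{m}\hat 1}_2^2 .$$
Dividing by $\norm{A^{m}\hat 1}_2^2$ (positive since $A^m\neq 0$), taking $2m$-th roots and letting $m\to\infty$ — so that $\norm{\xi}_2^{1/m}\to 1$ — I would conclude $\liminf_m\bigl(\norm{A^m\xi}_2/\norm{A^m\hat 1}_2\bigr)^{1/m}\ge\alpha^{r}$, which is in fact sharper than the claimed bound $\tfrac1{\sqrt2}\alpha^{r}$. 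I do not foresee obstacles beyond the orthogonality of the terms $B^k\xi\,C^{m-k}$; every other ingredient (the binomial expansion, the tensor factorization of $\norm{\cdot}_2$, and the identification $C=\alpha^rA'$) is routine. Were the orthogonality not available one would instead have to bound the $\binom mk\binom ml$ cross terms by Cauchy--Schwarz, which is messier and presumably accounts for the weaker constant $\tfrac1{\sqrt2}$.
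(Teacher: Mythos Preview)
Your argument is correct and follows essentially the same route as the paper: both isolate the ``all far'' piece $\xi\,C^{m}$ (the $k=0$ summand in your binomial expansion, equivalently the paper's sum over $r+1\le p_1<\ldots<p_m$), show it is orthogonal to the remainder, and bound from below by its norm alone. The paper verifies orthogonality by hand on the monomials $\xi V_{p_1}\cdots V_{p_m}$, while you package the same thing via the binomial expansion $A^m=\sum_k\binom{m}{k}B^kC^{m-k}$ and a gauge-automorphism argument for $\langle C^{m-k},C^{m-l}\rangle=0$; this is a cleaner way to say the same thing. Your sharper constant $\alpha^r$ is in fact correct: in the paper's last displayed chain the factor $1/2^m$ gets counted twice (once in $\norm{\xi V_{p_1}\cdots V_{p_m}}_2^2=\norm{\xi}_2^2/2^m$ and once hidden in $\norm{A^m\hat 1}_2^2=(m!)^2\sum\alpha^{2\sum q_i}/2^m$), which is what produces the spurious $1/\sqrt2$.
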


\vspace{0.3cm}

\begin{proof}
Let $\xi=\sum_{i=1}^{n}{c_{i}w_{i}}$ and $r=\max\{r_{i}\,:\,i=1,\ldots, n\}$ then 

\begin{eqnarray*}
A^{m}\xi & = & m!\sum_{1\leq p_1<\ldots <p_{m}}{\alpha^{p_1}\ldots \alpha^{p_m}V_{p_1}\ldots V_{p_m}\xi}\\
& = & m!\sum_{J_{m}}{\alpha^{p_1}\ldots \alpha^{p_m}V_{p_1}\ldots V_{p_m}\xi}+m!\sum_{r+1\leq p_1<\ldots<p_{m}}{\alpha^{p_1}\ldots \alpha^{p_m}V_{p_1}\ldots V_{p_m}\xi}\\
& = & m!\sum_{J_{m}}{\alpha^{p_1}\ldots \alpha^{p_m}V_{p_1}\ldots V_{p_m}\xi}+m!\sum_{r+1\leq p_1<\ldots<p_{m}}{\alpha^{p_1}\ldots \alpha^{p_m}\xi V_{p_1}\ldots V_{p_m}}
\end{eqnarray*}
where $J_{m}:=\{1\leq p_1<p_2<\ldots <p_{m} \,:\, \textrm{such that exists $i$ so that}\,\, p_{i}\leq r\}$.\\ 

\vspace{0.1cm}
\noindent It is easy to see that for $r+1\leq p_1<\ldots<p_{m}$ the vectors $\xi V_{p_1}\ldots V_{p_m}$ are pairwise orthogonal and are orthogonal to $A_{r}^{(m)}\xi:=m!\sum_{J_{m}}{\alpha^{p_1}\ldots \alpha^{p_m}V_{p_1}\ldots V_{p_m}\xi}$. Note also that $\norm{\xi V_{p_1}\ldots V_{p_m}}_2^{2}=\frac{\norm{\xi}_2^{2}}{2^m}$ for $r+1\leq p_1<\ldots<p_{m}$.\\

\noindent Therefore,
\begin{eqnarray*}
\norm{A^{m}\xi}_2^{2} & = &\norm{A_{r}^{(m)}\xi}_2^{2}+\frac{\norm{\xi}_2^{2}(m!)^2}{2^m}\sum_{r+1\leq p_1<\ldots<p_{m}}{\alpha^{2p_1}\ldots\alpha^{2p_m}}\\
& \geq & \frac{\norm{\xi}_2^{2}(m!)^2\alpha^{2rm}}{2^m}\sum_{1\leq p_1<\ldots<p_{m}}{\alpha^{2p_1}\ldots\alpha^{2p_m}}\\
& = & \frac{\norm{\xi}_{2}^{2}}{2^m}\alpha^{2rm}\norm{A^{m}1}_{2}^{2}.
\end{eqnarray*}
Hence,
$$\liminf_{m}{\Bigg( \frac{\norm{A^{m}\xi}}{\norm{A^{m}1}} \Bigg)^{\frac{1}{m}}}\geq \frac{1}{\sqrt{2}}\alpha^{r}\quad\quad \textrm{where}\quad r=\max\{r_{i}\,:\,i=1,\ldots, n\}.$$
\end{proof}

\vspace{0.2cm}

\begin{teo}\label{Ker}
The operator $A$ has trivial kernel and dense range.
\end{teo}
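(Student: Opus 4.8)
\emph{Proof proposal for Theorem~\ref{Ker}.} The plan is to reduce everything to the single statement that the null projection of $A$ has trace zero. Write $e=e_A\in\mathcal{R}$ for the projection onto $\Ker(A)$: this makes sense because $A\in\mathcal{R}$, so $\Ker(A)$ is invariant under $\mathcal{R}'=J\mathcal{R}J$ and hence equals $e\mathcal{H}$ for a projection $e\in\mathcal{R}$, with $Ae=0$ and $\dim_{\mathcal{R}'}(\Ker A)=\tau(e)$. Once we know $\tau(e)=0$ we get $e=0$ by faithfulness of $\tau$, so $\Ker(A)=\{0\}$; and the range is automatically dense, since for any operator $T$ in a finite von Neumann algebra the null projections of $T$ and $T^{*}$ have equal trace (polar decomposition), so $\dim_{\mathcal R'}\Ker(A^{*})=\tau(e)=0$, forcing $\Ker(A^{*})=\{0\}$ and $\overline{\mathrm{Range}(A)}=\Ker(A^{*})^{\perp}=\mathcal{H}$.

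To estimate $\tau(e)$, fix $n\geq 1$ and split $\mathcal{H}=L^{2}(\mathcal{R}_{\leq n})\otimes L^{2}(\mathcal{R}_{>n})$ with $\mathcal{R}_{\leq n}=\bigotimes_{k\leq n}M_{2}(\C)\cong M_{2^{n}}(\C)$ and $\mathcal{R}_{>n}=\overline{\bigotimes_{k>n}M_{2}(\C)}^{\mathrm{WOT}}$. Then $A=A_{n}\otimes 1+1\otimes B_{n}$, where $A_{n}=\sum_{k=1}^{n}\alpha^{k}V_{k}\in\mathcal{R}_{\leq n}$ and $B_{n}=\sum_{k>n}\alpha^{k}V_{k}\in\mathcal{R}_{>n}$; these two summands commute. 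By the formula recorded before, $A_{n}^{n}=n!\,\alpha^{n(n+1)/2}\,V^{\otimes n}\neq 0$ while $A_{n}^{n+1}=0$, so as a $2^{n}\times 2^{n}$ matrix $A_{n}$ is nilpotent of order \emph{exactly} $n+1$; and under the index shift $\mathcal{R}_{>n}\cong\mathcal{R}$ one has $B_{n}=\alpha^{n}A$, so $\Ker(B_{n}^{m})$ has the same $\mathcal{R}'$–dimension as $\Ker(A^{m})$ for every $m$.

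The heart of the argument is an exact computation of $\Ker(A)$ from this splitting. Choose a Jordan decomposition $PA_{n}P^{-1}=\bigoplus_{i}N_{d_{i}}$ of the matrix $A_{n}$ into nilpotent Jordan blocks, where $\sum_{i}d_{i}=2^{n}$ and one of the $d_{i}$ equals $n+1$. Conjugating $A$ by $P\otimes 1$ (an $\mathcal{R}'$–linear bijection of $\mathcal{H}$) decomposes $\Ker(A)$ into $\bigoplus_{i}\Ker\!\big(N_{d_{i}}\otimes 1+1\otimes\widetilde{B_{n}}\big)$, where $\widetilde{B_{n}}$ is the obvious ampliation of $B_{n}$; and solving the block recursion $\xi_{j+1}=-\widetilde{B_{n}}\xi_{j}$, $\widetilde{B_{n}}\xi_{d_{i}}=0$ identifies each summand $\mathcal{R}'$–linearly with $\Ker(\widetilde{B_{n}}^{\,d_{i}})$. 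Keeping track of the coupling dimensions (the passive tensor factor $\overline{\C^{2^{n}}}$ contributes $\mathcal{R}_{\leq n}'$–dimension $2^{-n}$) gives, for every $n\geq 1$, the identity
\[
\tau(e)\;=\;\frac{1}{2^{n}}\sum_{i}\tau\!\big(e_{A^{d_{i}}}\big),
\]
where $e_{A^{d}}$ denotes the null projection of $A^{d}$. Now I use the general inequality $\tau(e_{T^{m}})\leq m\,\tau(e_{T})$, valid for any $T\in\mathcal{R}$: indeed $T$ maps each "layer" $\Ker(T^{k+1})\ominus\Ker(T^{k})$ injectively and $\mathcal{R}'$–linearly into the previous layer, so the layers have non‑increasing $\mathcal{R}'$–dimension, each $\leq\tau(e_{T})$. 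Applied to $T=A$ this yields $\tau(e_{A^{d_{i}}})\leq d_{i}\tau(e)$, hence $\tau(e)\leq 2^{-n}\sum_{i}d_{i}\tau(e)=\tau(e)$, and equality is forced in every summand: $\tau(e_{A^{d_{i}}})=d_{i}\tau(e)$. Taking the block with $d_{i}=n+1$ gives $(n+1)\tau(e)=\tau(e_{A^{n+1}})\leq 1$; letting $n\to\infty$ forces $\tau(e)=0$, which completes the proof.

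I expect the main obstacle to be the bookkeeping in the middle step: verifying that the Jordan conjugation and the one–block identification are genuinely $\mathcal{R}'$–linear, and computing the coupling dimension of the passive factor $\overline{\C^{2^{n}}}$ correctly, so that the displayed identity comes out with the precise constant $2^{-n}$ and with $A^{d_{i}}$ (and not, say, $(n+1)A^{d_{i}}$). A secondary, routine but non‑trivial point is the monotonicity of the layer dimensions $\dim_{\mathcal{R}'}\big(\Ker(T^{k+1})\ominus\Ker(T^{k})\big)$ used to get $\tau(e_{T^{m}})\leq m\,\tau(e_{T})$.
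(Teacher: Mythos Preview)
Your proof is correct, and the bookkeeping you flag as worrisome does work out: conjugation by the invertible $P\otimes 1\in\mathcal{R}$ preserves the trace of the null projection (range and support projections of $(P\otimes 1)e$ are Murray--von Neumann equivalent), and on each Jordan block the map $\xi\mapsto(\xi,-B_n\xi,\dots,(-B_n)^{d_i-1}\xi)$ realises $\Ker(C_i)$ as the range of a column in $M_{d_i}(\mathcal{R}_{>n})$ whose support is $e_{11}\otimes f_{d_i}$, giving exactly the factor $2^{-n}$ and the power $A^{d_i}$ in your displayed identity. The layer inequality $\tau(e_{T^m})\le m\,\tau(e_T)$ is also fine: for $k\ge 1$ the element $(q_k-q_{k-1})T(q_{k+1}-q_k)$ has support $q_{k+1}-q_k$ and range contained in $q_k-q_{k-1}$, so the layer dimensions are indeed non-increasing.

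The paper takes a shorter, closely related route. It uses only the first-level self-similarity: under the decomposition $\mathcal{H}\cong\mathcal{H}\oplus\mathcal{H}$ the operator $A$ is unitarily equivalent to $\tilde A=\begin{pmatrix}\alpha A&\alpha\\0&\alpha A\end{pmatrix}$ (this is your $n=1$ case, where $A_1$ is already a single $2\times2$ Jordan block). Instead of invoking a general inequality, the paper computes $\Ker(\tilde A^{\,m})$ directly for every $m$ and reads off the exact recursion $\gamma_m=\tfrac12(\gamma_{m+1}+\gamma_{m-1})$ for $\gamma_m:=\tau(e_{A^m})$; with $\gamma_0=0$ this forces $\gamma_m=m\gamma_1$, hence $\gamma_1=0$ since $\gamma_m\le 1$. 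So both arguments end at $(n+1)\tau(e)\le 1$ for all $n$, but the paper reaches the \emph{equality} $\tau(e_{A^m})=m\,\tau(e)$ by a direct recursion rather than by squeezing an inequality. Your approach buys a more structural picture (the full Jordan decomposition at every level and a general layer-monotonicity lemma valid for any $T$ in a finite factor), at the cost of more machinery; the paper's approach is more economical and avoids the non-unitary conjugation entirely.
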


\vspace{0.2cm}

\begin{proof}
Since this operator lives in a finite factor it is enough to prove that $\mathrm{Ker}(A)=\{0\}$. Let us consider the Hilbert space $\mathcal{H}\oplus \mathcal{H}$ and the operator $\tilde{A}:\mathcal{H}\oplus \mathcal{H}\to \mathcal{H}\oplus \mathcal{H}$ given by

\vspace{0.2cm}
\begin{equation*}
\tilde{A}=\left( \begin{array}{cc}
\alpha A & \alpha\\
0 & \alpha A\\
\end{array} \right). 
\end{equation*}
\vspace{0.2cm}

\noindent Decompose $\mathcal{H}$ as $\mathcal{H}=\mathcal{H}_1\oplus \mathcal{H}_2$ where 
$\mathcal{H}_1:=\{\hat{Q}\otimes\xi + \hat{V}^{*}\otimes\eta \,\,\,:\,\,\, \xi,\eta\in \mathcal{H}\}$ and $\mathcal{H}_2:=\{\hat{P}\otimes\xi + \hat{V}\otimes\eta \,\,\,:\,\,\, \xi,\eta\in \mathcal{H}\}$ and $Q$ and $P$ are the orthogonal projections onto these subspaces respectively. Since 
$$PAP=\sum_{n=2}^{+\infty}{\alpha^{n}P\otimes I^{\otimes(n-1)}\otimes V},\quad PAQ=\alpha V$$
$$QAQ=\sum_{n=2}^{+\infty}{\alpha^{n}Q\otimes I^{\otimes(n-1)}\otimes V},\quad QAP=0$$ 
then $A$ has trivial kernel if and only if $\tilde{A}$ has trivial kernel and moreover, $\tau_{\mathcal{R}}(\mathrm{Ker}(A^{n}))=\tau_{\mathcal{R}\otimes M_{2}(\C)}(\mathrm{Ker}(\tilde{A}^{n})):=\gamma_{n}$.\\

\noindent It is easy to see that 
\begin{equation*}
\tilde{A}^{n}=\left( \begin{array}{cc}
\alpha^{n} A^{n} & n\alpha^{n} A^{n-1}\\
0 & \alpha^{n} A^{n}\\
\end{array} \right)  
\end{equation*}
and since 
\begin{equation*}
\left( \begin{array}{cc}
\alpha^{n} A^{n} & n\alpha^{n} A^{n-1}\\
0 & \alpha^{n} A^{n}\\
\end{array} \right)  
\left( \begin{array}{c}
\xi_{1}\\
\xi_{2}\\
\end{array}\right )=
\left( \begin{array}{c}
\alpha^{n}(A^{n}(\xi_{1})+nA^{n-1}(\xi_2))\\
\alpha^{n}A^{n}(\xi_{2})\\
\end{array}\right )
\end{equation*}

\vspace{0.2cm}
\noindent we see that $\mathrm{Ker}(\tilde{A}^n)=\Big\{(\xi,-\frac{1}{n}A(\xi)+\eta)\,\,\,:\,\,\, \xi\in\mathrm{Ker}(A^{n+1}),\,\, \eta\in\mathrm{Ker}(A^{n-1})\Big\}$.\\

\noindent Hence 

$$\tau_{\mathcal{R}\otimes M_{2}(\C)}(\mathrm{Ker}(\tilde{A}^{n}))=\frac{1}{2}\Big(\tau_{\mathcal{R}}(\mathrm{Ker}(A^{n+1}))+\tau_{\mathcal{R}}(\mathrm{Ker}(A^{n-1}))\Big).$$

\vspace{0.2cm}

\noindent Therefore, $\gamma_{n}=\frac{1}{2}(\gamma_{n+1}+\gamma_{n-1})$ which implies that $\gamma_{n}=n\gamma_{1}$. Therefore, $\gamma_{1}=0$ and thus $\mathrm{Ker}(A)=\{0\}$.

\end{proof}


\vspace{0.3cm}

\begin{teo} Let $r>0$ and $\{\gamma_n\}_{n=1}^{+\infty}$ be a sequence of positive numbers and $A$ be as in (\ref{Aa}). 
The subspace $\mathcal{H}_{r}(A)$ defined by $\mathcal{H}_{r}(A):=\overline{\mathcal{E}_{r}(A)}$ where

$$\mathcal{E}_{r}(A):=\{\xi\in\mathcal{H}\,:\,\limsup_{n}{\gamma_n\,\norm{A^{n}\xi}_2^{\frac{1}{n}}\leq r}\}$$

\noindent is either $\mathcal{H}$ or $\{0\}$.

\end{teo}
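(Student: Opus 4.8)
The plan is to show that $\mathcal{H}_r(A)$ is a projection in a suitable algebra which, by a $0$--$1$ type argument, can only be trivial. First I would observe that since $A$ commutes with every operator in $\{A\}'\cap\mathcal{R}$, the subspace $\mathcal{E}_r(A)$ is invariant under the commutant, hence $\mathcal{H}_r(A)$ is $\{A\}'$-invariant and its projection $p$ therefore lies in $W^*(A)$; by the generation results of the previous section (or directly, since $A$ is the sum $\sum \alpha^n V_n$ with the $|\alpha^n|$ all distinct) we know $W^*(A)=\mathcal{R}$. So $p\in\mathcal{R}$ and $p$ is $A$-hyperinvariant. Next I would use the two estimates just established. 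Proposition~\ref{lemma} (in the form \eqref{wo}) gives, for any tensor word $w$,
$$\lim_m\Bigg(\frac{\norm{A^m\hat w}_2}{\norm{A^m\hat 1}_2}\Bigg)^{1/m}=\alpha^{\#_P(ww^*)},$$
and the subsequent Proposition gives, for any finite linear combination $\xi=\sum_{i=1}^n c_i w_i$ of tensor words of length at most $r_0$, the lower bound
$$\liminf_m\Bigg(\frac{\norm{A^m\xi}_2}{\norm{A^m\hat 1}_2}\Bigg)^{1/m}\geq \frac{1}{\sqrt2}\,\alpha^{r_0}.$$

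Now fix $r>0$ and the sequence $\{\gamma_n\}$. I would split into two cases according to the asymptotic size of $\gamma_n^{1/n}\norm{A^n\hat 1}_2^{1/n}$, which is the natural ``scaling constant'' of the problem. If $\limsup_n \gamma_n\norm{A^n\hat 1}_2^{1/n}>r$, then take any nonzero $\xi\in\mathcal{E}_r(A)$; approximating $\xi$ in $\norm{\cdot}_2$ by a finite linear combination $\eta$ of tensor words and using that $\norm{A^m\eta}_2\geq \norm{A^m\xi}_2-\norm{A}^m\norm{\xi-\eta}_2$ together with the lower bound above, one forces $\limsup_m \gamma_m\norm{A^m\eta}_2^{1/m}\leq r$ as well, hence $\liminf_m\gamma_m\norm{A^m\hat 1}_2^{1/m}\,\alpha^{-r_0}/\sqrt2\le r$ — a careful bookkeeping of these exponents will show this is incompatible with the case hypothesis unless $\xi=0$, so $\mathcal{E}_r(A)=\{0\}$ and $\mathcal{H}_r(A)=\{0\}$. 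If instead $\limsup_n \gamma_n\norm{A^n\hat 1}_2^{1/n}\le r$ (and in particular when it is strictly less), then \eqref{wo} shows every tensor word $w$ with $\#_P(ww^*)$ bounded — indeed, after rescaling, every tensor word — satisfies the defining inequality, perhaps after enlarging $r$ by the harmless factor coming from $\alpha^{\#_P}\le 1$; since the tensor words span a $\norm{\cdot}_2$-dense subspace of $\mathcal{H}$, this gives $\mathcal{H}_r(A)=\mathcal{H}$.

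The main obstacle I anticipate is the borderline matching of exponents: the quantities $\alpha^{\#_P(ww^*)}$ and $\frac1{\sqrt2}\alpha^{r_0}$ differ, and the arbitrary weights $\gamma_n$ together with the growth of $\norm{A^n\hat 1}_2$ (which behaves like $(n!\,\alpha^{n^2/2})$-type terms, growing/decaying faster than exponentially) mean that the ``critical radius'' is really $0$ or $\infty$ rather than a finite threshold. So I would expect that the correct dichotomy is governed entirely by whether $\limsup_n \gamma_n\norm{A^n\hat 1}_2^{1/n}$ exceeds $r$ or not, and the real work is to push the lower-bound Proposition from finite linear combinations of words to arbitrary $\xi\in\mathcal{H}$ by a density/perturbation argument, controlling the error term $\norm{A}^m\norm{\xi-\eta}_2$ against $\norm{A^m\hat 1}_2$ — for this one needs that $\norm{A^m\hat 1}_2^{1/m}$ and $\norm{A}$ are comparable in the relevant limsup/liminf sense, which follows from the explicit recursion $\tau(R_m)=\frac{m^2\alpha^{2m}}{2(1-\alpha^{2m})}\tau(R_{m-1})$ established above. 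Once that perturbation step is in hand, both cases close quickly and $\mathcal{H}_r(A)\in\{\{0\},\mathcal{H}\}$.
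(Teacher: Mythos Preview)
Your approach has a genuine gap, and the paper proceeds by an entirely different mechanism.

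\textbf{The gap.} Your dichotomy on $L:=\limsup_n \gamma_n\norm{A^n\hat 1}_2^{1/n}$ versus $r$ is not the right one, and the ``Case 1'' conclusion $\mathcal{E}_r(A)=\{0\}$ is simply false when $r<L<\infty$. Indeed, by \eqref{wo} a tensor word $w$ with $\#_P(ww^*)=k$ satisfies $\limsup_m \gamma_m\norm{A^m\hat w}_2^{1/m}=\alpha^k L$, so as soon as $\alpha^k L\le r$ you have $\hat w\in\mathcal{E}_r(A)$; such words span a dense set, so in this regime $\mathcal{H}_r(A)=\mathcal{H}$, not $\{0\}$. More seriously, your perturbation step cannot work: in your Case~1 you need $\limsup_m \gamma_m\norm{A^m(\xi-\eta)}_2^{1/m}\le r$ in order to transfer the $\mathcal{E}_r$-condition from $\xi$ to $\eta$. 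But $\norm{A^m\hat 1}_2^{1/m}\to 0$ (quasinilpotence), so $\gamma_m\norm{A^m\hat 1}_2^{1/m}>r$ along a subsequence forces $\gamma_m\to\infty$ there, and the error term $\gamma_m\norm{A^m}^{1/m}\norm{\xi-\eta}_2^{1/m}$ is not bounded by $r$. Even after refining the dichotomy to ``$L<\infty$'' versus ``$L=\infty$'', in the second case you still have no way to rule out a general $\xi\in\mathcal{H}$ (not a finite word combination) from lying in $\mathcal{E}_r(A)$; the lower bound of Proposition~4.3 only controls finite sums of words and does not survive passage to the $\norm{\cdot}_2$-closure.

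\textbf{What the paper does instead.} The paper ignores Propositions~4.1--4.3 entirely for this theorem and uses the self-similar block structure of $A$. Under the identification $\mathcal{R}\cong M_2(\C)\otimes\mathcal{R}$ one has
\[
A=\begin{pmatrix}\alpha A & \alpha\\ 0 & \alpha A\end{pmatrix},\qquad
A^n=\begin{pmatrix}\alpha^n A^n & n\alpha^n A^{n-1}\\ 0 & \alpha^n A^n\end{pmatrix}.
\]
Using this and Theorem~\ref{Ker} (trivial kernel, so $A$ preserves traces of range projections), the paper exhibits an explicit orthogonal decomposition $\mathcal{H}_r(A)=H_1\oplus H_2$ and reads off from the block picture that $\mathcal{H}_r(A)$ is $V^*$-invariant. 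Iterating the decomposition to $M_{2^n}(\C)\otimes\mathcal{R}$ shows $\mathcal{H}_r(A)$ is $V_n^*$-invariant for every $n$, hence $A^*$-invariant. A closed subspace that is both $A$- and $A^*$-invariant and affiliated to $W^*(A)=\mathcal{R}$ must be $0$ or $\mathcal{H}$. The argument is insensitive to the particular weights $\gamma_n$, which is exactly the feature your analytic approach struggles to achieve.
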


\vspace{0.2cm}

\begin{proof}
Decompose $\mathcal{H}$ as $\mathcal{H}=\mathcal{H}_1\oplus \mathcal{H}_2$ where 
$\mathcal{H}_1:=\{\hat{Q}\otimes\xi + \hat{V}^{*}\otimes\eta \,\,\,:\,\,\, \xi,\eta\in \mathcal{H}\}$ and $\mathcal{H}_2:=\{\hat{P}\otimes\xi + \hat{V}\otimes\eta \,\,\,:\,\,\, \xi,\eta\in \mathcal{H}\}$ as we did in Theorem \ref{Ker}. Then the operator $A$ can be represented as $A:\mathcal{H}_1\oplus \mathcal{H}_2\to \mathcal{H}_1\oplus \mathcal{H}_2$ with
\vspace{0.2cm}
\begin{equation}\label{repp}
A=\left( \begin{array}{cc}
\alpha A & \alpha\\
0 & \alpha A\\
\end{array} \right) 
\end{equation}
and hence
\begin{equation*}
A^{n}=\left( \begin{array}{cc}
\alpha^{n}A^{n} & n\alpha^{n}A^{n-1}\\
0 & \alpha^{n} A^{n}\\
\end{array} \right). 
\end{equation*}

\vspace{0.2cm}
\noindent Therefore, under the canonical isomorphism of $\mathcal{R}\simeq M_{2}(\C)\otimes \mathcal{R}$ we see that the operator $A$ is identified with $\alpha(P+Q)\otimes A+\alpha V\otimes 1$ and $A^{n}$ is identified with $\alpha^{n}(P+Q)\otimes A^{n}+n\alpha^{n}V\otimes A^{n-1}$.

\vspace{0.3cm}
\noindent The subspace $\mathcal{H}_{r}(A)$ is hyperinvariant, hence, affiliated to the von Neumann algebra $\mathcal{R}$. Let $\beta$ be the trace of this subspace $\beta=\tau(\mathcal{H}_{r}(A))$.
Define the subspaces $E_{1}:=\{\hat{P}\otimes A(\xi)+\hat{V}\otimes A(\eta) \,\,\,:\,\,\, \xi,\eta\in \mathcal{E}_{r}(A)\}$, $E_{2}:=\{\hat{Q}\otimes A(\xi)+\hat{V}^{*}\otimes A(\eta) \,\,\,:\,\,\, \xi,\eta\in \mathcal{E}_{r}(A)\}$, $H_{1}=\overline{E_{1}}$ and $H_{2}=\overline{E_{2}}$. The subspaces $H_{1}$ and $H_{2}$ are affiliated to $\mathcal{R}$ and since the kernel of $A$ is trivial $\tau(H_1)=\tau(H_2)=\frac{\beta}{2}$. It is clear that the subspaces $H_{1}$ and $H_{2}$ are orthogonal. Now we will prove that $E_{1}, E_{2}\subset \mathcal{E}_{r}(A)$ and hence $\mathcal{H}_{r}(A)=H_{1}\oplus H_{2}$. Let $\xi$ and $\eta$ be vectors in $\mathcal{E}_{r}(A)$ and $h=\hat{P}\otimes A(\xi)+\hat{V}\otimes A(\eta)\in E_{1}$ then

\begin{eqnarray*}
\norm{A^{n}(h)}_{2} & = & \norm{\alpha^{n}(P+Q)\otimes A^{n}+n\alpha^{n}V\otimes A^{n-1}(\hat{P}\otimes A(\xi)+\hat{V}\otimes A(\eta))}_{2}\\
& = & \norm{\alpha^{n}(\hat{P}\otimes A^{n+1}(\xi)+\hat{V}\otimes A^{n+1}(\eta))}_{2}\\
& \leq & 2 \alpha^{n} \cdot \sup\{ \norm{\hat{P}\otimes A^{n+1}(\xi)}_{2},\norm{\hat{V}\otimes A^{n+1}(\eta)}_{2}\}\\
& \leq & \sqrt{2} \alpha^{n} \norm{A} \cdot \sup\{ \norm{A^{n}(\xi)}_2,\norm{A^{n}(\eta)}_2 \}.
\end{eqnarray*}

\noindent Therefore,

\begin{equation*}
\limsup_{n}{\,\gamma_{n} \norm{A^{n}(\hat{P}\otimes A(\xi)+\hat{V}\otimes A(\eta))}_{2}^{\frac{1}{n}}}\leq \alpha\,r < r.
\end{equation*}

\vspace{0.5cm}

\noindent Thus, $E_{1}\subset \mathcal{E}_{r}(A)$. Analogously, let $\xi$ and $\eta$ be vectors in $\mathcal{E}_{r}(A)$ and $h=\hat{Q}\otimes A(\xi)+\hat{V}^{*}\otimes A(\eta)\in E_{2}$ then

\begin{eqnarray*}
\norm{A^{n}(h)}_{2} & = & \norm{\alpha^{n}(P+Q)\otimes A^{n}+n\alpha^{n}V\otimes A^{n-1}(\hat{Q}\otimes A(\xi)+\hat{V}^{*}\otimes A(\eta))}_{2}\\
& = & \alpha^{n} \norm{\hat{Q}\otimes A^{n+1}(\xi)+\hat{V}^{*}\otimes A^{n+1}(\eta)+n\hat{V}\otimes A^{n}(\xi)+n\hat{P}\otimes A^{n}(\eta)}_{2}\\
& \leq & \sqrt{2}\alpha^{n}(n+\norm{A}) \cdot \sup\{\norm{A^{n}(\xi)}_{2},\norm{A^{n}(\eta)}_{2}\}.
\end{eqnarray*}

\noindent Therefore,

\begin{equation*}
\limsup_{n}{\,\gamma_{n} \norm{A^{n}(\hat{Q}\otimes A(\xi)+\hat{V}^{*}\otimes A(\eta))}_{2}^{\frac{1}{n}}}\leq \alpha\,r<r.
\end{equation*}

\vspace{0.3cm}
\noindent Hence, $E_{2}\subset \mathcal{E}_{r}(A)$ and therefore, $\mathcal{H}_{r}(A)=H_{1}\oplus H_{2}$. Since $V^{*}(E_{1})\subseteq E_{2}$ and $V^{*}(E_{2})=\{0\}$ we see that $\mathcal{H}_{r}(A)$ is $V^{*}$--invariant.\\ 

\vspace{0.2cm}

\noindent Representing now our operator $A$ as 

\begin{equation*}
A=\left( \begin{array}{cccc}
\alpha^2 A & \alpha^2 & \alpha & 0 \\
0 & \alpha^2 A & 0 & \alpha \\
0 & 0 & \alpha^2 A & \alpha^2 \\
0 & 0 & 0 & \alpha^{2}A^{2} \\
\end{array} \right)
\end{equation*}

\vspace{0.2cm}
\noindent it is not hard to see that

\begin{equation*}
A^{n}=\left( \begin{array}{cccc}
\alpha^{2n} A^{n} & \alpha^{2n}nA^{n-1} & \alpha^{2n-1}nA^{n-1} & \alpha^{2n-1}n(n-1) A^{n-2} \\
0 & \alpha^{2n} A^{n} & 0 & \alpha^{2n-1}n A^{n-1} \\
0 & 0 & \alpha^{2n} A^{n} & \alpha^{2n}n A^{n-1} \\
0 & 0 & 0 & \alpha^{2n}A^{n} \\
\end{array} \right).
\end{equation*}

\vspace{0.5cm}

\noindent Define the subspaces $E_{11}:=\{\hat{P}\otimes\hat{P}\otimes A^{2}(\xi_1)+\hat{P}\otimes\hat{V}\otimes A^{2}(\xi_2)+\hat{V}\otimes\hat{P}\otimes A^{2}(\xi_3)+\hat{V}\otimes\hat{V}\otimes A^{2}(\xi_4)\,\,:\,\, \xi_{1},\xi_{2},\xi_{3},\xi_{4}\in \mathcal{E}_{r}(A)\}$, $E_{12}:=\{\hat{P}\otimes\hat{Q}\otimes A^{2}(\xi_1)+\hat{P}\otimes\hat{V}^{*}\otimes A^{2}(\xi_2)+\hat{V}\otimes\hat{Q}\otimes A^{2}(\xi_3)+\hat{V}\otimes\hat{V}^{*}\otimes A^{2}(\xi_4)\,\,:\,\, \xi_{1},\xi_{2},\xi_{3},\xi_{4}\in \mathcal{E}_{r}(A)\}$, $E_{21}:=\{\hat{Q}\otimes\hat{P}\otimes A^{2}(\xi_1)+\hat{Q}\otimes\hat{V}\otimes A^{2}(\xi_2)+\hat{V}^{*}\otimes\hat{P}\otimes A^{2}(\xi_3)+\hat{V}^{*}\otimes\hat{V}\otimes A^{2}(\xi_4)\,\,:\,\, \xi_{1},\xi_{2},\xi_{3},\xi_{4}\in \mathcal{E}_{r}(A)\}$, $E_{22}:=\{\hat{Q}\otimes\hat{Q}\otimes A^{2}(\xi_1)+\hat{Q}\otimes\hat{V}^{*}\otimes A^{2}(\xi_2)+\hat{V}^{*}\otimes\hat{Q}\otimes A^{2}(\xi_3)+\hat{V}^{*}\otimes\hat{V}^{*}\otimes A^{2}(\xi_4)\,\,:\,\, \xi_{1},\xi_{2},\xi_{3},\xi_{4}\in \mathcal{E}_{r}(A)\}$
and $H_{ij}=\overline{E_{i,j}}$ for $i,j=1,2$. Using the same argument as before it is not hard to see that $\mathcal{H}_{r}(A)=H_{11}\oplus H_{12}\oplus H_{21}\oplus H_{22}$ and that $\mathcal{H}_{r}(A)$ is $I\otimes V^{*}$--invariant. 

\vspace{0.5cm}
\noindent Analogously, given $n\geq 1$ and $i_{1},\ldots,i_{n}\in \{1,2\}$ we define $\Omega_{i_1,\ldots,i_{n}}$ the set of words of length $n$ in the alphabet $\{\hat{P},\hat{Q},\hat{V}^{*},\hat{V}\}$ given by 

$$\Omega_{i_1,\ldots,i_{n}}=\{a_{1}\otimes\ldots \otimes a_{n} \,\,:\,\,a_{k}\in \{\hat{P},\hat{V}\} \textrm{\,\,if\,\,}\, i_{k}=1 \textrm{\,\,\,and\,\,\,} a_{k}\in \{\hat{Q},\hat{V}^{*}\} \textrm{\,\,\,if\,\,\,} i_{k}=2 \}.$$

\vspace{0.3cm}
\noindent Let $E_{i_1,\ldots,i_{n}}$ and $H_{i_1,\ldots,i_{n}}$ be the subspaces defined by

$$E_{i_1,\ldots,i_{n}}:=\mathrm{span}\{w\otimes A^{n}(\xi)\,\,:\,\, w\in\Omega_{i_1,\ldots,i_{n}},\, \xi\in\mathcal{E}_{r}(A)\}\quad\textrm{and}\quad H_{i_1,\ldots,i_{n}}=\overline{E_{i_1,\ldots,i_n}}.$$

\vspace{0.3cm}
\noindent We can see that 

$$\mathcal{H}_{r}(A)=\bigoplus_{\{i_1,\ldots,i_{n}\}\in \{1,2\}^{n}}{H_{i_1,\ldots,i_{n}}}$$

\noindent and therefore, $\mathcal{H}_{r}(A)$ is $I^{\otimes (n-1)}\otimes V^{*}$--invariant. Since $n$ is arbitrary we see that $\mathcal{H}_{r}(A)$ is $A^{*}$--invariant. Thus, the subspace $\mathcal{H}_{r}(A)$ is $A$--invariant and $A^{*}$--invariant and hence trivial.

\end{proof}



\vspace{0.6cm}

\begin{question}
Does the operator $A$ have non-trivial hyperinvariant subspaces?
\end{question}

\vspace{0.6cm}

\section{Distribution of $\mathrm{Re}(A)$ and $\mathrm{Im}(A)$}

In this section we will prove that given $\{c_n\}_n\in l_{1}(\N)$ with $c_n\geq 0$, then $\mathrm{Re}(A)$ and $\mathrm{Im}(A)$ have the same distribution and we will describe its moments. Let $X=A+A^*$ and $Y=A-A^*$, then $\mathrm{Re}(A)=\frac{1}{2}X$ and $\mathrm{Im}(A)=\frac{1}{2i}Y$. Thus,

\begin{equation}
X=\sum_{n=1}^{+\infty}{c_n\,R_n}  \mathrm{\quad where \quad} R_n=I^{\otimes(n-1)}\otimes R \mathrm{\quad with \quad} 
R=\left( \begin{array}{cc}
0 & 1\\
1 & 0\\
\end{array} \right)
\end{equation}

\begin{equation}
Y=\sum_{n=1}^{+\infty}{c_n\,T_n}  \mathrm{\quad\,\,\, where \quad} T_n=I^{\otimes(n-1)}\otimes T \mathrm{\quad with \quad} 
T=\left( \begin{array}{cc}
0 & 1\\
-1 & 0\\
\end{array} \right)
\end{equation}

\vspace{0.3cm}

\noindent Note that $R^2=1$, $T^2=-1$ and $\tau(R)=\tau(T)=0$. From this observation, it is clear that $\tau(X^{2p+1})=\tau(Y^{2p+1})=0$. Now we will find a combinatorial formula for $\tau(X^{2p})$ and prove that $\tau(Y^{2p})=(-1)^p\tau(X^{2p})$ for $p\geq 0$. But first we will fix some notation. Given $p\geq 0$, $1\leq k \leq p$ and $n_1\geq n_2\geq \ldots\geq n_k$ such that $\sum_{i=1}^{k}{n_i}=p$ we will denote by $\gamma(p\,;n_1,n_2,\ldots, n_k)$ the number of partitions of the set $\{1,2,\ldots 2p\}$ in exactly $k$ blocks $B_1, B_2,\ldots B_k$ with $\# B_i=2n_i$. In the following Lemma we will prove some properties of these numbers that will permit us to compute them recursively.

\vspace{0.4cm}

\begin{lem} Let $p\geq 1$ and $n_1\geq n_2\geq \ldots \geq n_k$ be such that $\sum_{i=1}^{k}{n_i}=p$. Let $\gamma(p\,;n_1,\ldots,n_k)$ be as before, then
\vspace{0.3cm}
\begin{enumerate}
\item $\gamma(p\,;1,1,\ldots,1)=(2p-1)(2p-3)\cdots 1$
\vspace{0.3cm}
\item $\gamma(p\,;p)=1$
\vspace{0.3cm}
\item If $n_1>n_2$ then $\gamma(p\,;n_1,\ldots,n_k)={{2p} \choose {2n_1}} \cdot \gamma(p-n_1\,;n_2,\ldots,n_k)$
\vspace{0.3cm}
\item If exists $r<k$ such that $n_1=n_2=\ldots=n_{r}$ and $n_1>n_{r+1}$ then $$\gamma(p\,;n_1,n_2,\ldots,n_k)=\frac{1}{r!} {2p \choose 2n_1}\ldots
{2p-2(r-1)n_1 \choose 2n_1}\cdot\gamma(p-rn_1\,;n_{r+1},\ldots,n_k).$$
\end{enumerate}
\end{lem}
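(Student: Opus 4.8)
The plan is to prove the four claims about $\gamma(p\,;n_1,\ldots,n_k)$ directly from its definition as the number of set-partitions of $\{1,\ldots,2p\}$ into exactly $k$ unordered blocks of prescribed even sizes $2n_1,\ldots,2n_k$. The underlying principle throughout is a standard multinomial count: if we wanted \emph{ordered} blocks of sizes $2n_1,\ldots,2n_k$ we would simply get $\binom{2p}{2n_1,2n_2,\ldots,2n_k}=\binom{2p}{2n_1}\binom{2p-2n_1}{2n_2}\cdots$, and we must divide by the number of orderings that produce the same unordered partition, which is $\prod_j m_j!$ where $m_j$ counts how many of the $n_i$ equal a given value.

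For part (1), the case $n_1=\cdots=n_k=1$ forces $k=p$, and $\gamma(p\,;1,\ldots,1)$ is the number of perfect matchings (pairings) of a $2p$-element set; I would recall the classical double-factorial count, either by the recursion ``element $1$ is matched to one of the remaining $2p-1$ elements, then recurse'' giving $\gamma(p;1,\ldots,1)=(2p-1)\,\gamma(p-1;1,\ldots,1)$, or by the ordered/unordered bookkeeping above: $\binom{2p}{2,2,\ldots,2}/p! = \frac{(2p)!}{2^p\,p!}=(2p-1)(2p-3)\cdots 1$. Part (2) is immediate: with a single block of size $2p$ there is exactly one partition. Part (3): when $n_1>n_2$, the block of size $2n_1$ is the unique largest block, hence canonically distinguished; I would argue that choosing it means selecting $2n_1$ of the $2p$ points in $\binom{2p}{2n_1}$ ways, and the remaining $2p-2n_1$ points must then be partitioned into blocks of sizes $2n_2\geq\cdots\geq 2n_k$, of which there are by definition $\gamma(p-n_1\,;n_2,\ldots,n_k)$ — note $\sum_{i\geq 2}n_i = p-n_1$, so the parameters match. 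This gives a bijection, establishing the product formula. Part (4) is the general version of (3): with $n_1=\cdots=n_r>n_{r+1}$, the $r$ largest blocks are distinguished as a set but not individually, so I first choose an \emph{ordered} tuple of $r$ disjoint blocks of size $2n_1$ in $\binom{2p}{2n_1}\binom{2p-2n_1}{2n_1}\cdots\binom{2p-2(r-1)n_1}{2n_1}$ ways, then divide by $r!$ to forget the order among these equal-sized top blocks, and finally partition the remaining $2p-2rn_1$ points in $\gamma(p-rn_1\,;n_{r+1},\ldots,n_k)$ ways.

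I do not anticipate a serious obstacle here — the argument is elementary combinatorics. The one point that requires a little care, and which I would state explicitly, is \emph{why} the largest blocks are canonically identifiable: because $n_1>n_2$ (resp.\ $n_1>n_{r+1}$) strictly, no block of a non-maximal size can be confused with a maximal-size block, so ``remove the largest block(s)'' is a well-defined operation on partitions of the given type and its fibers are exactly as counted. The only genuine subtlety is the division by $r!$ in part (4): one must check that distinct orderings of the $r$ chosen size-$2n_1$ blocks together with the \emph{same} partition of the remainder give distinct ordered tuples but the \emph{same} unordered partition, and conversely every unordered partition of the desired type arises from exactly $r!$ such ordered choices — this is where the strict inequality $n_1 > n_{r+1}$ is used again, to guarantee that the $r$ top blocks of any such partition are precisely the size-$2n_1$ blocks and nothing else. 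With that bijective bookkeeping spelled out, all four formulas follow.
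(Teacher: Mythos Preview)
Your proposal is correct and follows essentially the same approach as the paper: the paper proves (1) by the same ``pair off element 1 first'' recursion, calls (2) trivial, proves (3) by choosing the unique size-$2n_1$ block first, and says (4) is similar to (3). If anything, you are more careful than the paper in spelling out why the strict inequality $n_1>n_{r+1}$ is needed and why the $r!$ overcount is exact.
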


\begin{proof}
(1) Each element in $\{1,2,\ldots,2p\}$ has to be paired with another. For the first element we have $(2p-1)$ possibilities. Now we remove these two elements and we have $2p-2$ remaining. Each remaining element has to be paired with another, having $(2p-3)$ possibilities. Continuing with this process we get (1). (2) is trivial. (3) In this case, we have only one block of size $2n_1$ and we have exactly ${2p \choose 2n_1}$ possible different blocks like this. We remove this block and we have $2p-2n_1$ elements and we continue with our partition process to get (3). (4) is similar to (3).
\end{proof}

\vspace{0.5cm}
\noindent Given $p \geq 0$ we have that 
\begin{equation*}
X^{2p}=\sum_{1\leq i_1,j_1,\ldots,i_p,j_p}{c_{i_1}c_{j_1}\ldots c_{i_p}c_{j_p}\, R_{i_1}R_{j_1}\ldots R_{i_p}R_{j_p}}
\end{equation*}
and using that $R^{2}=1$ and $\tau(R)=0$ it is not difficult to see that
\begin{equation}\label{eqX}
\tau(X^{2p})=\sum_{k=1}^{p}\Bigg\{ {\sum_{(n_1,n_2,\ldots,n_k)}{\Bigg (\gamma(p\,;n_1,n_2,\ldots,n_k)\cdot \sum_{(p_1,p_2,\ldots,p_k)}{c_{p_1}^{2n_1}c_{p_2}^{2n_2}\ldots c_{p_k}^{2n_k}} \Bigg )}} \Bigg\}
\end{equation}
where the second sum runs over $n_1\geq n_2\geq \ldots \geq n_k$ such that $\sum_{i=1}^{k}{n_i}=p$ and the last one over all the possible $1\leq p_1,\ldots,p_k <+\infty$ with $p_i\neq p_j$ if $i\neq j$.\\
\noindent Analogously,
\vspace{0.3cm}
\begin{equation*}
Y^{2p}=\sum_{1\leq i_1,j_1,\ldots,i_p,j_p}{c_{i_1}c_{j_1}\ldots c_{i_p}c_{j_p}\, T_{i_1}T_{j_1}\ldots T_{i_p}T_{j_p}}
\end{equation*}
and using that $\tau(T^{2n})=(-1)^n$ we get
\begin{equation*}
\tau(Y^{2p})=(-1)^p \sum_{k=1}^{p}\Bigg\{ {\sum_{(n_1,n_2,\ldots,n_k)}{\Bigg (\gamma(p\,;n_1,n_2,\ldots,n_k)\cdot \sum_{(p_1,p_2,\ldots,p_k)}{c_{p_1}^{2n_1}c_{p_2}^{2n_2}\ldots c_{p_k}^{2n_k}} \Bigg )}} \Bigg\}.
\end{equation*}

\vspace{0.3cm}

\noindent Hence, $\tau(X^{2p+1})=\tau(Y^{2p+1})=0$ and $\tau(Y^{2p})=(-1)^p \tau(X^{2p})$. Since $\mathrm{Re}(A)=\frac{1}{2}X$ and $\mathrm{Im}(A)=\frac{1}{2i}Y$ we see that $\mathrm{Re}(A)$ and $\mathrm{Im}(A)$ have the same distribution. More precisely, we can state the following Proposition.

\vspace{0.5cm}

\begin{prop}
Let $A$ be as before and let $a=\mathrm{Re}(A)$ and $b=\mathrm{Im}(A)$. Then
\begin{displaymath}
\tau(a^n)=\tau(b^n)= \left\{ \begin{array}{ll}
0 & \textrm{\,if\,\,\,\,} n=2p+1\\
(\frac{1}{2})^{2p}\tau(X^{2p}) & \textrm{\,if\,\,\,\,} n=2p
\end{array} \right.
\end{displaymath}
where $\tau(X^{2p})$ is as in equation (\ref{eqX}).
\end{prop}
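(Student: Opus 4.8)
The plan is to reduce everything to the two explicit expansions already available in the discussion above: writing $X=A+A^{*}=\sum_{n}c_{n}R_{n}$ and $Y=A-A^{*}=\sum_{n}c_{n}T_{n}$ one has $a=\tfrac12 X$ and $b=\tfrac{1}{2i}Y$, and the building blocks satisfy $R^{2}=1$, $T^{2}=-1$, $\tau(R)=\tau(T)=0$. First I would expand $X^{m}=\sum c_{i_{1}}\cdots c_{i_{m}}\,R_{i_{1}}\cdots R_{i_{m}}$ and note that the $R_{n}$, acting on pairwise distinct tensor legs, commute, so each word $R_{i_{1}}\cdots R_{i_{m}}$ equals $\bigotimes_{n}R^{m_{n}}$, where $m_{n}$ is the number of occurrences of the index $n$ among $i_{1},\dots,i_{m}$. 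Since the trace factorizes over the tensor legs and $\tau(R^{\ell})$ equals $1$ for $\ell$ even and $0$ for $\ell$ odd, a given term contributes to $\tau(X^{m})$ exactly when every $m_{n}$ is even; the same holds for $Y$, the only difference being $\tau(T^{\ell})=(-1)^{\ell/2}$ for $\ell$ even.

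From this the odd moments are immediate: if $m=2p+1$ then $\sum_{n}m_{n}=2p+1$ is odd, so not all $m_{n}$ can be even, whence $\tau(X^{2p+1})=\tau(Y^{2p+1})=0$ and therefore $\tau(a^{2p+1})=\tau(b^{2p+1})=0$. For $m=2p$, I would organise the surviving terms by the data of (i) the partition of $\{1,2,\dots,2p\}$ whose block containing a position $j$ collects all positions carrying the same tensor leg as the $j$-th factor — necessarily each block has even size $2n_{i}$ with $n_{1}\geq\dots\geq n_{k}$ and $\sum n_{i}=p$ — and (ii) an injective choice of legs $p_{1},\dots,p_{k}$ attached to those blocks. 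The number of partitions with the prescribed block sizes is precisely $\gamma(p;n_{1},\dots,n_{k})$ from the Lemma above, and such a term carries the coefficient $c_{p_{1}}^{2n_{1}}\cdots c_{p_{k}}^{2n_{k}}$; summing produces exactly formula (\ref{eqX}). Running the identical count for $Y^{2p}$, each block of size $2n_{i}$ additionally contributes the sign $\tau(T^{2n_{i}})=(-1)^{n_{i}}$, and the product of these signs over the $k$ blocks is $(-1)^{n_{1}+\dots+n_{k}}=(-1)^{p}$; hence $\tau(Y^{2p})=(-1)^{p}\tau(X^{2p})$.

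It then remains only to pass from $X,Y$ to $a,b$. For $n=2p$ we get $\tau(a^{2p})=\bigl(\tfrac12\bigr)^{2p}\tau(X^{2p})$ and $\tau(b^{2p})=\bigl(\tfrac{1}{2i}\bigr)^{2p}\tau(Y^{2p})=\bigl(\tfrac12\bigr)^{2p}i^{-2p}(-1)^{p}\tau(X^{2p})$; since $i^{-2p}=(-1)^{-p}=(-1)^{p}$ the two signs cancel and $\tau(b^{2p})=\bigl(\tfrac12\bigr)^{2p}\tau(X^{2p})=\tau(a^{2p})$, which is the claimed formula. The only genuinely non-formal step is the combinatorial bookkeeping in the second paragraph — setting up the bijection between surviving index tuples and pairs (partition with even blocks, injective labelling of the blocks by legs) — but this is precisely the computation already carried out in the discussion preceding the statement, so the proof amounts to assembling those observations together with the elementary identity $i^{-2p}=(-1)^{p}$.
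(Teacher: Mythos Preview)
Your proof is correct and follows essentially the same approach as the paper: expand $X^{m}$ and $Y^{m}$, use $R^{2}=1$, $T^{2}=-1$, $\tau(R)=\tau(T)=0$ together with the tensor factorization of the trace to kill odd moments and to organize the even moments via partitions into even-size blocks, obtaining $\tau(Y^{2p})=(-1)^{p}\tau(X^{2p})$, and then convert to $a,b$ via the scalars $\tfrac12$ and $\tfrac{1}{2i}$. You are slightly more explicit than the paper about the commutativity of the $R_{n}$ and about the final sign cancellation $i^{-2p}(-1)^{p}=1$, but the argument is the same.
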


\vspace{0.5cm}

\noindent Another way of looking at the operator $X$,
\begin{equation}
X=\sum_{n=1}^{+\infty}{c_n\,R_n}  \mathrm{\,\,\,\,\,\,where\,\,\,\,\,\,} R_n=I^{\otimes(n-1)}\otimes R \mathrm{\,\,\,\,with\,\,\,\,} 
R=\left( \begin{array}{cc}
0 & 1\\
1 & 0\\
\end{array} \right),
\end{equation}
is as a measurable function in $[-1,1]$. The operators $\{R_n\}_n$ are selfadjoint and commute with each other. Therefore, we can think them as independent random variables in $[-1,1]$. Moreover, if we think $R_n$ as a function $f_n$ in $[-1,1]$ then these functions satisfy that $\tau(R_n)=\int_{-1}^{1}f_n(x) dx=0$ and $f_n^2(x)=1$ and we can picture them as,

\begin{figure}[Ht]
\begin{center}
\includegraphics[width=5cm]{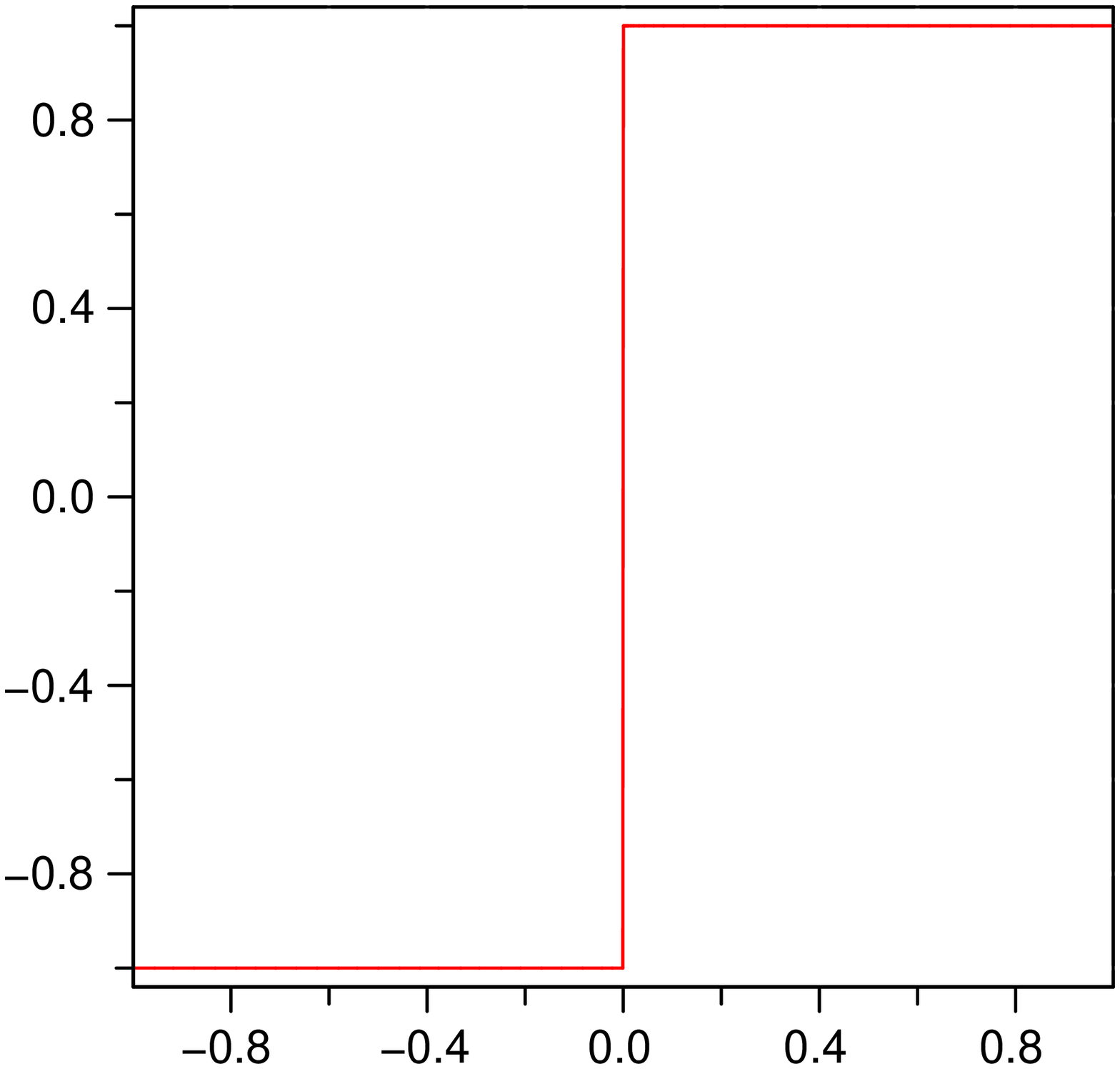}
\includegraphics[width=5cm]{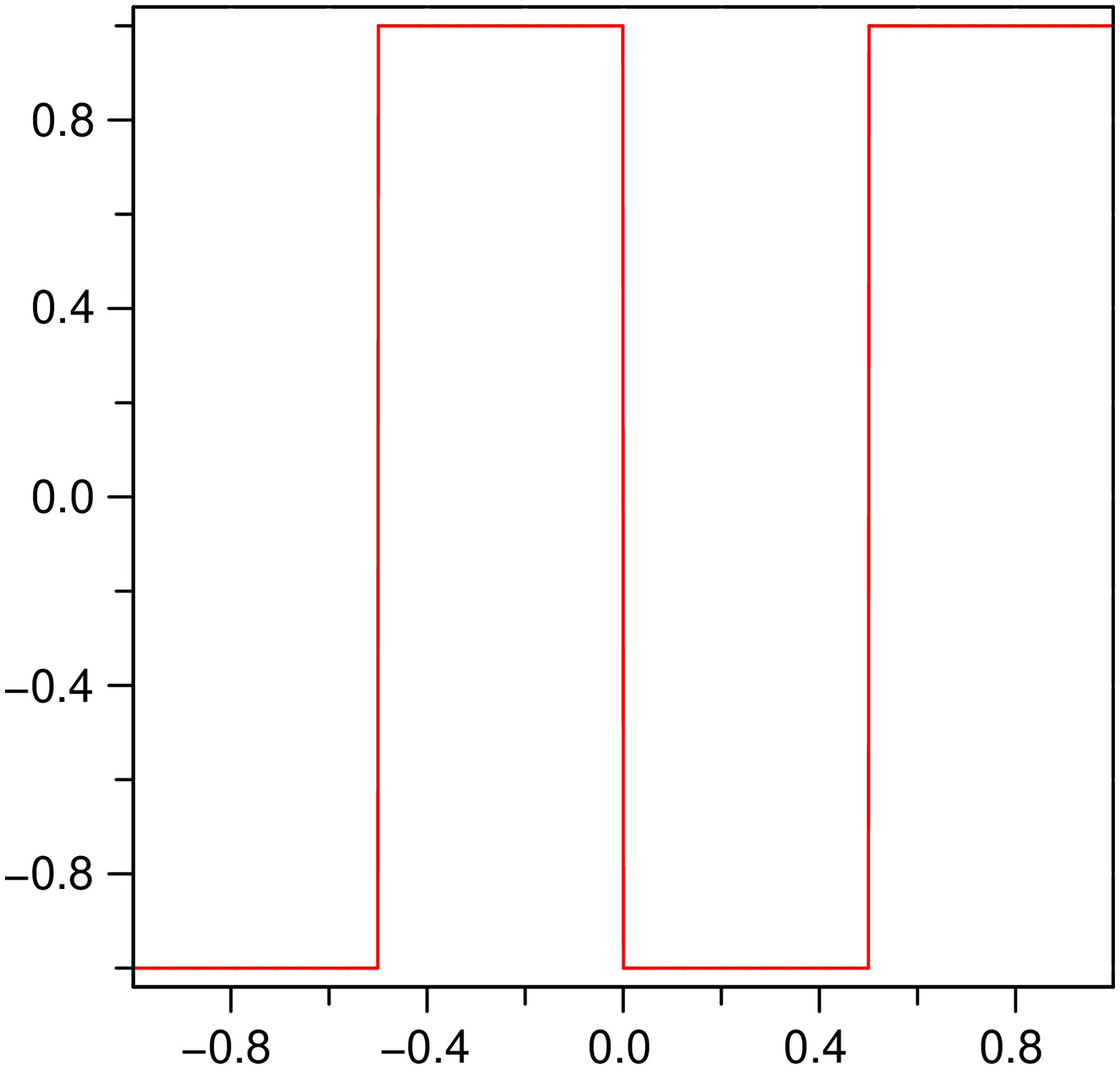}
\includegraphics[width=5cm]{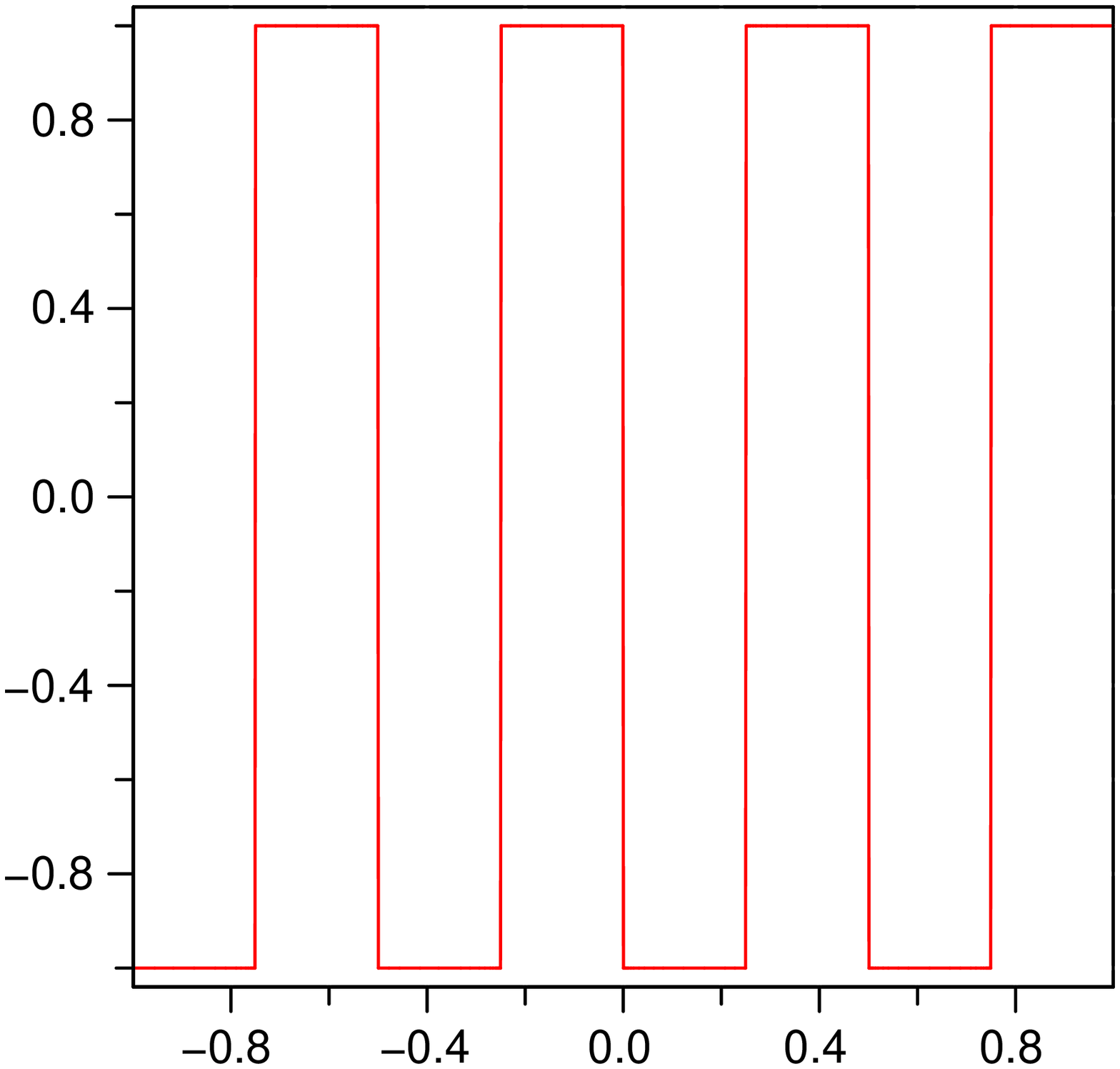}
\caption{Functions $f_1(x)$, $f_{2}(x)$ and $f_3(x)$}
\end{center}
\end{figure}

\vspace{0.4cm}
\begin{displaymath}
f_1(x):=\left\{ \begin{array}{ll}
-1  & \textrm{if} \quad -1<x<0\\
 1  & \textrm{if} \quad\quad\  0<x<1\\
 0  & \textrm{otherwise}
\end{array} \right.
\end{displaymath}

\vspace{0.4cm}

\noindent and $f_{2}(x)=f_1(2x+1)+f_1(2x-1)$ and in general, $$f_{n+1}(x)=f_n(2x+1)+f_n(2x-1) \textrm{\quad for $x\in[-1,1]$ and $n\geq 1$}.$$

\vspace{0.2cm}

\noindent Hence, we can represent the operator $X$ by the measurable function $f(x)$, given by $f(x)=\sum_{n=1}^{+\infty}{c_n\,f_n(x)}$ and  
\begin{equation}
\tau(X^{n})=\int_{-1}^{1}{f(x)^{n}\,dx} \textrm{\,\,\,\,\,\,for all \,\,} n\geq 0.
\end{equation}

\vspace{0.4cm}

\noindent Note that in the case $c_{n}=(\frac{1}{2})^n$ we get that $f(x)=x$ on $[-1,1]$. Hence, $\tau(X^{2p})=\int_{-1}^{1}{x^{2p}\,dx}=\frac{2}{2p+1}$ and therefore
\begin{displaymath}
\tau(a^n)=\tau(b^n)= \left\{ \begin{array}{ll}
0 & \textrm{\,if\,\, $n$ odd}\\
(\frac{1}{2})^{n}\frac{2}{n+1} & \textrm{\,if\,\, $n$ even}.
\end{array} \right.
\end{displaymath}

\vspace{0.4cm}

\begin{figure}[Ht]\label{hfig2}
\begin{center}
\includegraphics[width=7cm]{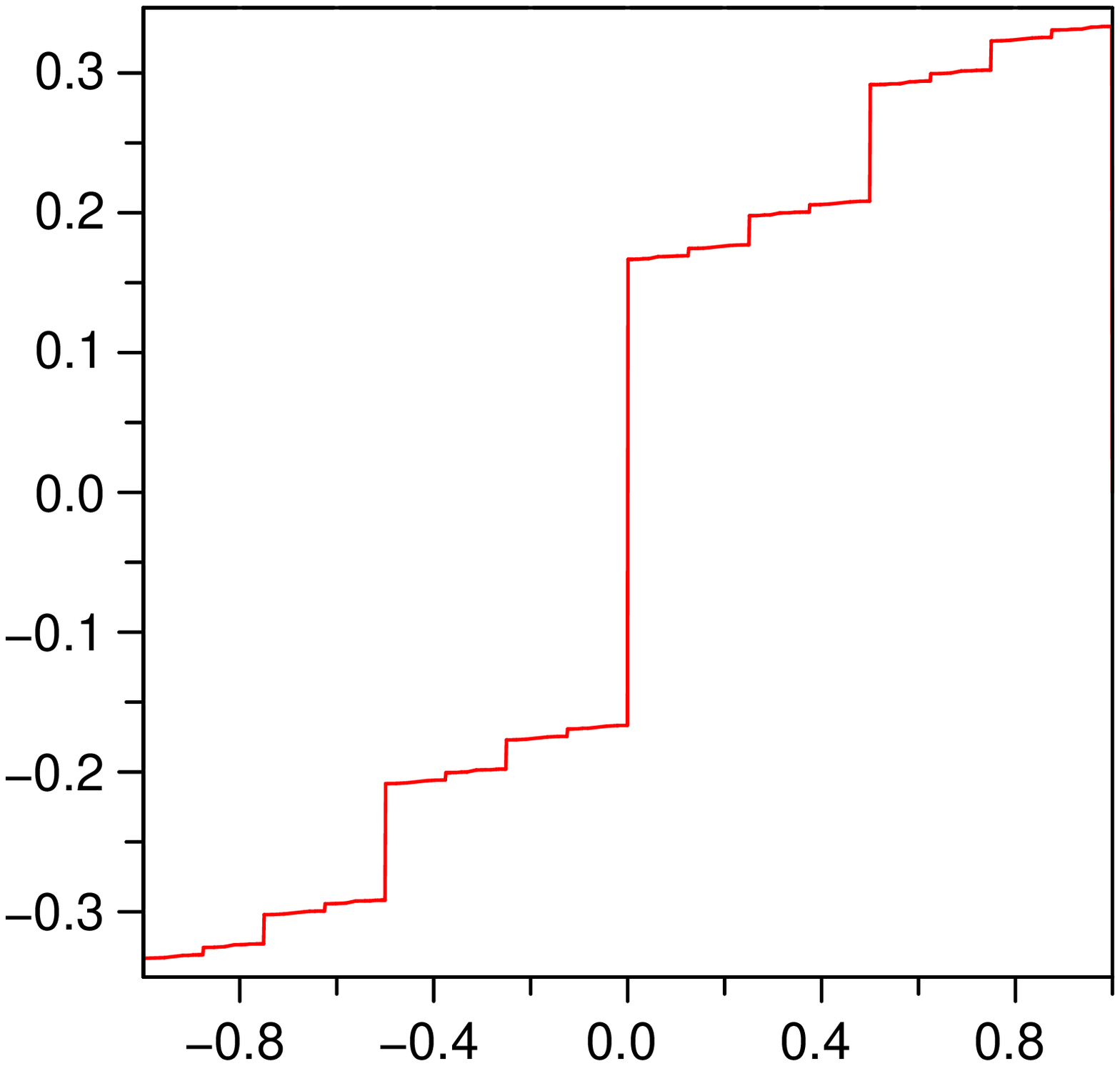}
\caption{Function $\sum_{k=1}^{10}{\Big(\frac{1}{4}\Big)^{k}f_{k}(x)}$}
\end{center}
\end{figure}

\begin{obs}
The spectrum of the operator $X$, $\sigma(X)$, is the image of the function $f$. Therefore, if $c_1>\sum_{n=2}^{+\infty}{c_n}$ then
$0\notin \sigma(X)$ and the operator $X$ is invertible and so $a$ and $b$. This is the case, for example, of $c_{n}=\alpha^{n}$ when $0<\alpha<\frac{1}{2}$ (see Figure 2). Note also that $\sigma(a)=\sigma(b)\subseteq [-s/2,s/2]$ where $s=\sum_{n=1}^{+\infty}{c_n}$.
\end{obs}

\vspace{0.4cm}

In probability theory, the characteristic function of any random variable completely defines its probability distribution. On the real line it is given by the following formula, where $Z$ is any random variable with the distribution in question:
$$\varphi_{Z}(t):=\mathbb{E}(e^{itZ})$$
where $t$ is a real number, $i$ is the imaginary unit, and $\mathbb{E}$ denotes the expected value.
Characteristic functions are particularly useful for dealing with functions of independent random variables. In particular, if $Z_{1}$ and $Z_{2}$ are independent random variables then $\varphi_{Z_1+Z_2}(t)=\varphi_{Z_1}(t)\varphi_{Z_2}(t)$. Characteristic functions can also be used to find moments of random variables. Provided that $n$-th moment exists, characteristic function can be differentiated $n$ times and the following formula holds
\begin{equation}\label{moments}
\mathbb{E}(Z^{n})=\Big(\frac{1}{i}\Big)^{n}\Bigg[\frac{d^{\,n}}{dt^n}\varphi_{Z}(t)\Bigg]_{t=0}.
\end{equation}
We will compute the characteristic function of $X_{\alpha}:=\sum_{n=1}^{+\infty}{\alpha^{n}R_{n}}$. For each $n\geq 1$, $\tau(R_{n}^{k})=0$ if $k$ is odd and $1$ if $k$ is even. Therefore, 
$$\varphi_{\alpha^{n}R_{n}}(t)=\sum_{k=0}^{+\infty}{(-1)^{k}\frac{t^{2k}}{(2k)!}\alpha^{2nk}}=\cos(\alpha^{n}t)$$
hence
$$\varphi_{X_{\alpha}}(t)=\prod_{n=1}^{+\infty}{\cos(\alpha^{n}t)}.$$
Then we can use (\ref{moments}) and the last equation to compute the even moments of $X_{\alpha}$. For example, using this formula we can see that $\tau(X_{\alpha}^2)=\frac{\alpha^{2}}{1-\alpha^{2}}$.

\vspace{0.3cm}
\begin{prop}
The operators $a$ and $b$ clearly do not commute but $\tau(a^nb^m)=\tau(a^n)\tau(b^m)=\tau(a^n)\tau(a^m)$ for all $n$ and $m$.
\end{prop}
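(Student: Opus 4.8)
The plan is to reduce the mixed-moment identity to a computation about the generating operators $R_n = I^{\otimes(n-1)}\otimes R$ and $T_n = I^{\otimes(n-1)}\otimes T$, using the fact (established in this section) that the $R_n$ commute among themselves and the $T_n$ commute among themselves, and that $R_n$ commutes with $T_m$ whenever $n\neq m$ while $R_n$ and $T_n$ anticommute (since $RT = -TR$ in $M_2(\mathbb C)$). Since $a = \frac12 X = \frac12\sum_n c_n R_n$ and $b = \frac{1}{2i}Y = \frac{1}{2i}\sum_n c_n T_n$, expanding $a^n b^m$ gives a sum of terms $c_{i_1}\cdots c_{i_n} c_{j_1}\cdots c_{j_m}\, R_{i_1}\cdots R_{i_n} T_{j_1}\cdots T_{j_m}$ times the scalar $(\tfrac12)^n(\tfrac1{2i})^m$, and I would compute the trace of each monomial separately.

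First I would record the trace rule for a pure $R$-monomial and a pure $T$-monomial: because $R^2 = 1$, $T^2 = -1$, $\tau(R) = \tau(T) = 0$, and the factors at distinct tensor legs are independent with respect to $\tau$, a product $R_{k_1}\cdots R_{k_s}$ has nonzero trace exactly when every index that appears does so an even number of times, in which case the trace is $1$; similarly $T_{k_1}\cdots T_{k_s}$ has nonzero trace exactly when every index appears an even number of times, in which case the trace is $(-1)^{(\text{total number of }T\text{'s})/2}$ — this is precisely the mechanism already used above to get $\tau(Y^{2p}) = (-1)^p\tau(X^{2p})$. Next I would observe that in the mixed monomial $R_{i_1}\cdots R_{i_n} T_{j_1}\cdots T_{j_m}$, on any tensor leg $\ell$ the $R$'s sitting there and the $T$'s sitting there contribute a single $2\times 2$ matrix of the form $R^{a}T^{b}$ (after sliding all equal-leg factors together; sliding across different legs is free, and sliding $T$'s past $R$'s at the same leg only introduces signs which do not affect whether the trace vanishes). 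Since $\tau(R^a T^b) = 0$ unless $a$ and $b$ are both even, the whole monomial has nonzero trace only when, leg by leg, the number of $R$'s is even and the number of $T$'s is even — i.e. only when the multiset $\{i_1,\dots,i_n\}$ has every element with even multiplicity and the multiset $\{j_1,\dots,j_m\}$ does too. But that is exactly the same index condition that governs the nonvanishing of $\tau(R_{i_1}\cdots R_{i_n})$ and of $\tau(T_{j_1}\cdots T_{j_m})$ separately, and on such a configuration $R$'s and $T$'s live at disjoint-parity-free... more precisely each leg carries an even power of $R$ (hence $1$) times an even power of $T$ (hence $\pm1$), so the trace of the mixed monomial factors as $\tau(R_{i_1}\cdots R_{i_n})\cdot\tau(T_{j_1}\cdots T_{j_m})$.

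Summing these identities over all index tuples, with the scalar weights $c_{i_1}\cdots c_{i_n}(\tfrac12)^n$ and $c_{j_1}\cdots c_{j_m}(\tfrac1{2i})^m$, gives $\tau(a^n b^m) = \tau(a^n)\tau(b^m)$, and the further equality $\tau(b^m) = \tau(a^m)$ is already contained in the Proposition on equal distribution proved earlier in this section. The one point requiring a little care — and what I expect to be the only genuine obstacle — is the sign bookkeeping when commuting a $T_\ell$ past an $R_\ell$ at the same leg: I must check that these signs are irrelevant, which follows because a nonzero-trace monomial has an even number of $T$'s and an even number of $R$'s at each leg, so the commutations come in pairs and the accumulated sign on each leg is $+1$; equivalently, one may simply note $\tau(R^a T^b) = \tau(T^b R^a)$ since $\tau$ is a trace, so the ordering never matters once we are inside the trace. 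With that settled the argument is a routine expansion, and no deeper structural input is needed beyond the commutation relations and the already-established single-variable moment formulas.
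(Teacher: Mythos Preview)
Your proposal is correct and follows essentially the same route as the paper: expand $a^n b^m$ into monomials in the $R_i$'s and $T_j$'s, and use independence across tensor legs together with the $2\times 2$ identity $\tau(R^\ell T^h)=\tau(R^\ell)\tau(T^h)$ to factor the trace of each monomial. Your sign-bookkeeping discussion is more elaborate than needed --- the paper simply invokes $\tau(R^\ell T^h)=\tau(R^\ell)\tau(T^h)$ directly, which already absorbs any ordering issues --- but the argument is the same.
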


\begin{proof}
The last equality is trivial since $a$ and $b$ have the same distribution. To prove the first equality it is enough to prove that $\tau(X^{n}Y^{m})=\tau(X^{n})\tau(Y^{m})$ for all $n$ and $m$. Since, 
\begin{equation*}
X^{n}=\sum_{1\leq l_1,\ldots,l_n}{c_{l_1}\ldots c_{l_n}\, R_{l_1}\ldots R_{l_n}} \textrm{\,\,\,\,\,and \,\,\,}
Y^{m}=\sum_{1\leq k_1,\ldots,k_m}{c_{k_1}\ldots c_{k_m}\, T_{k_1}\ldots T_{k_m}},
\end{equation*}
to prove $\tau(X^{n}Y^{m})=\tau(X^{n})\tau(Y^{m})$ it is enough to prove that $$\tau(R_{l_1}\ldots R_{l_n}T_{k_1}\ldots T_{k_m})=\tau(R_{l_1}\ldots R_{l_n})\tau(T_{k_1}\ldots T_{k_m})$$ and this is true since $\tau(R^{l}T^{h})=\tau(R^{l})\tau(T^{h})$ for all $l$ and $h$.
\end{proof}

\vspace{0.6cm}

The family of operators $\{R_{n}\}_{n=1}^{+\infty}$ is a commuting family of selfadjoint operators. If we denote by 
$$\N_{2}(\C):=\Bigg \{\left( \begin{array}{cc} \alpha & \beta\\ \beta & \alpha\\ \end{array} \right)\,:\,\alpha, \beta\in\ \C \Bigg \}\subset M_{2}(\C),$$ then it is not difficult to see that
\begin{equation}\label{cartan}
\mathcal{A}:=W^{*}(\{R_{n}\}_{n=1}^{+\infty})=\overline{\Bigg (\bigotimes_{n=1}^{+\infty}{\N_{2}(\C)}\Bigg )}^{\mathrm{WOT}}
\end{equation}
which is a Cartan masa in the hyperfinite $\mathrm{II}_1$--factor $\mathcal{R}$. It is clear that $W^{*}(a)=W^{*}(X)\subseteq \mathcal{A}$. A natural question is when is $W^{*}(a)=\mathcal{A}$? Is $W^{*}(a)$ always a diffuse abelia subalgebra of $\mathcal{A}$? 

\vspace{0.4cm}

\begin{obs}\label{masa}
Consider the projections 
$$p_{n}:=\frac{1}{2}\,\,I^{\otimes(n-1)}\otimes \left( \begin{array}{cc} 1 & 1\\ 1 & 1\\ \end{array} \right) \quad \text{and}\quad q_{n}:=\frac{1}{2}\,\,I^{\otimes(n-1)}\otimes \left( \begin{array}{cc} 1 & -1\\ -1 & 1\\ \end{array} \right).$$
Then $R_{n}=p_{n}-q_{n}$ and $X=\sum_{n=1}^{+\infty}{c_{n}R_{n}}$. If for all $n\geq 1$, $c_{n}\geq \sum_{k=n+1}^{+\infty}{c_{k}}$ then the function $f$ is increasing and we can recover $p_{n}$ and $q_{n}$ as spectral projections of $X$ and hence $W^{*}(X)=\mathcal{A}$. This is the case, for example, of $c_{n}=\alpha^{n}$ when $0<\alpha\leq \frac{1}{2}$.
\end{obs}

\vspace{0.3cm}
\noindent The following Theorem answers the questions asked before.

\vspace{0.2cm}

\begin{teo}
Let $0<\alpha<1$ and $X_{\alpha}=\sum_{n=1}^{+\infty}{\alpha^{n}R_{n}}$. Then the abelian algebra $W^{*}(X_{\alpha})$ is always diffuse. If $\,\,0<\alpha\leq \frac{1}{2}$ then $W^{*}(X_{\alpha})$ is the Cartan masa $\mathcal{A}$ as in (\ref{cartan}). However, if there exist a polynomial $p(x)=a_{1}x^{n_1}+a_{2}x^{n_2}+\ldots+a_{k}x^{n_k}$ with coefficients $a_{i}\in\{1,-1\}$ such that $p(\alpha)=0$ (for example $\alpha=\frac{\sqrt{5}-1}{2}$) then $W^{*}(X_{\alpha})\subsetneq \mathcal{A}$. 
\end{teo}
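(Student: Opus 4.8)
\emph{Proof proposal.} The plan is to run everything through the identification, already in force in this section, of $\mathcal{A}$ with $L^{\infty}$ of the coin space $\Omega:=\{-1,1\}^{\N}$ equipped with the fair product measure $\mathbb{P}$ (equivalently with $L^{\infty}[-1,1]$, via binary expansion, as in the picture with the functions $f_n$): under it $R_n$ becomes the $n$-th coordinate $\epsilon\mapsto\epsilon_n$ (the Rademacher function $f_n$) and $X_{\alpha}$ becomes the coding map $\pi\colon\Omega\to\R$, $\pi(\epsilon)=\sum_{n\ge1}\alpha^{n}\epsilon_n$ (the function $f=\sum_n\alpha^nf_n$). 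By the Doob--Dynkin lemma, $W^{*}(X_{\alpha})$ is then exactly the $\ast$-isomorphic copy of $L^{\infty}(\R,\nu_{\alpha})$ sitting inside $\mathcal{A}$, where $\nu_{\alpha}:=\pi_{*}\mathbb{P}$ is the distribution of $X_{\alpha}$ (the symmetric Bernoulli convolution of ratio $\alpha$), consisting of the classes of functions $g\circ\pi$ with $g$ bounded Borel. The three assertions then become: (i) $\nu_{\alpha}$ has no atoms; (ii) for $\alpha\le\tfrac12$ this copy is all of $\mathcal{A}$; (iii) a $\pm1$ relation at $\alpha$ forces it to be proper.

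For (i), a subalgebra of the form $L^{\infty}(\R,\nu_{\alpha})$ is diffuse iff $\nu_{\alpha}$ is atomless, so I would show $\nu_{\alpha}$ has no atoms using self-similarity: writing $X_{\alpha}\stackrel{d}{=}\alpha R_1+\alpha X_{\alpha}'$ with $R_1,X_{\alpha}'$ independent gives $\nu_{\alpha}=\tfrac12(T_{+})_{*}\nu_{\alpha}+\tfrac12(T_{-})_{*}\nu_{\alpha}$ with $T_{\pm}(x)=\alpha x\pm\alpha$. If $\nu_{\alpha}$ had an atom, a largest atom mass $m>0$ is attained and the set $F$ of atoms of mass exactly $m$ is finite and nonempty; applying the self-similarity at $c\in F$ shows $\tfrac12\nu_{\alpha}(\{(c-\alpha)/\alpha\})+\tfrac12\nu_{\alpha}(\{(c+\alpha)/\alpha\})=m$ with each term $\le\tfrac12 m$, so both $(c\mp\alpha)/\alpha$ lie in $F$. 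Choosing $c\in F$ of maximal absolute value and taking the branch that moves away from $0$ produces (since $\alpha<1$) an element of $F$ of strictly larger absolute value, a contradiction; the degenerate possibility $F=\{0\}$ is excluded because then $\pm1\in F$. Hence $\nu_{\alpha}$ is atomless and $W^{*}(X_{\alpha})$ is diffuse for every $\alpha\in(0,1)$. (Alternatively one could cite the Jessen--Wintner purity/atom criterion, but the above is elementary.)

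For (ii), with $c_n=\alpha^n$ one has $c_n-\sum_{k>n}c_k=\alpha^n\,\tfrac{1-2\alpha}{1-\alpha}\ge0$ precisely when $\alpha\le\tfrac12$, so the hypothesis of Remark~\ref{masa} holds; by that remark the projections $p_n,q_n$ arise as spectral projections of $X_{\alpha}$, hence lie in $W^{*}(X_{\alpha})$, and since $\{p_n,q_n:n\ge1\}$ generates $\mathcal{A}$ while $X_{\alpha}\in\mathcal{A}$, we conclude $W^{*}(X_{\alpha})=\mathcal{A}$. For (iii), write $p(x)=\sum_{i=1}^{k}a_ix^{n_i}$ with $a_i\in\{-1,1\}$; replacing $p$ by $x^{j}p$ we may assume $1\le n_1<\dots<n_k$. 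Put $N=\{n_1,\dots,n_k\}$ and, for each choice of coordinates off $N$, let $\epsilon,\epsilon'\in\Omega$ agree off $N$ with $\epsilon_{n_i}=a_i$ and $\epsilon'_{n_i}=-a_i$ on $N$; then $\pi(\epsilon)-\pi(\epsilon')=2\sum_i a_i\alpha^{n_i}=2p(\alpha)=0$ while $\epsilon\neq\epsilon'$. Thus the disjoint cylinders $C=\{\epsilon_{n_i}=a_i\ \forall i\}$ and $C'=\{\epsilon_{n_i}=-a_i\ \forall i\}$, each of measure $2^{-k}>0$, are pointwise identified by $\pi$ through the measure-preserving coordinate flip $\sigma\colon C\to C'$. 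Now $h:=\chi_{\{\epsilon_{n_1}=a_1\}}$ is one of the projections $p_{n_1},q_{n_1}$, hence lies in $\mathcal{A}$; if $h=g\circ\pi$ a.e.\ for some bounded Borel $g$, then on a full-measure subset of $C$ we would get $h(\epsilon)=g(\pi(\epsilon))=g(\pi(\sigma\epsilon))=h(\sigma\epsilon)$, i.e.\ $1=0$. So $h\notin W^{*}(X_{\alpha})$ and $W^{*}(X_{\alpha})\subsetneq\mathcal{A}$; for $\alpha=\tfrac{\sqrt5-1}{2}$ one has $\alpha^{2}+\alpha-1=0$, hence $\alpha^{3}+\alpha^{2}-\alpha=0$, a relation of the required type.

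I expect the only genuinely delicate point to be the atomlessness in (i); part (ii) is immediate from Remark~\ref{masa}, and in (iii) the substance is the elementary observation that a $\pm1$ vanishing relation at $\alpha$ is the same thing as an ``exact overlap'' of two positive-measure cylinders under $\pi$, the rest being the routine measure-theoretic translation of ``overlap'' into ``$\pi$ does not generate a distinguishing projection''.
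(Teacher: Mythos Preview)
Your proof is correct and uses the same measure-theoretic model on the coin space $\{-1,1\}^{\N}$ that the paper does; parts (ii) and (iii) match the paper's proof almost line for line (the paper simply says ``$g_{\alpha}$ does not separate the two cylinders $G_1,G_2$'' where you go further and exhibit the specific projection $h=p_{n_1}$ or $q_{n_1}$ that fails to be $\pi$-measurable, which is a harmless sharpening).

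The one genuine difference is in (i). The paper proves atomlessness directly on $\Omega$: assuming $E=g_{\alpha}^{-1}(\{\beta\})$ has measure $\gamma>0$, for each $n$ it chooses the half $E_n^{\pm}$ of $E$ (split on the $n$-th coordinate) of measure $\ge\gamma/2$ and flips that coordinate to get a set $F_n$ on which $g_{\alpha}$ takes the single value $\beta\pm 2\alpha^{n}$; since $\alpha^{n}\neq\pm\alpha^{m}$ for $n\neq m$, the $F_n$ are pairwise disjoint, contradicting $\mu(F_n)\ge\gamma/2$. Your argument instead works on the pushforward side, exploiting the IFS self-similarity $\nu_{\alpha}=\tfrac12(T_{+})_{*}\nu_{\alpha}+\tfrac12(T_{-})_{*}\nu_{\alpha}$ to show that the (finite, nonempty) set of atoms of maximal mass would be forward-invariant under the inverse branches $c\mapsto (c\mp\alpha)/\alpha$, which is impossible because one branch always strictly increases $|c|$. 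Both arguments are short and elementary; yours has the mild advantage of being a standard pattern that applies to any non-degenerate contractive IFS, while the paper's coordinate-flip argument stays closer to the concrete Rademacher picture and avoids talking about $\nu_{\alpha}$ at all.
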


\vspace{0.2cm}

\begin{proof}
The case $0<\alpha\leq \frac{1}{2}$ was discussed in Remark \ref{masa}.\\

\vspace{0.1cm}
\noindent Consider the Bernoulli space $(M,\mu)=\Big(\prod_{n=1}^{+\infty}{\{1,-1\}},(\frac{1}{2}(\delta_{1}+\delta_{-1}))^{\otimes \N}\Big)$.
We can model the selfadjoint element $X_{\alpha}$ as the measurable function $g_{\alpha}:(M,\mu)\to \R$ defined by $g_{\alpha}(\{\epsilon_n\})=\sum_{n=1}^{+\infty}{\epsilon_{n}\alpha^{n}}$. In order to prove that $W^{*}(X_{\alpha})$ is diffuse it is equivalent to prove that $W^{*}(g_{\alpha})\subseteq L^{\infty}(M,\mu)$ is diffuse. Assume this is not true, hence there exists $\beta\in\R$ such that $\mu(g_{\alpha}^{-1}(\{\beta\}))=\gamma>0$. Denote by $E$ the set $E:=g_{\alpha}^{-1}(\{\beta\})$. For each $n\geq 1$, we define 

$$E_{n}^{+}:=\{x=\{\epsilon_k\}_{k}\in E\,:\,\epsilon_{n}=1\}\quad \text{and}\quad E_{n}^{-}:=\{x=\{\epsilon_k\}_{k}\in E\,:\,\epsilon_{n}=-1\}.$$
\vspace{0.1cm}

\noindent It is clear that for each $n\geq 1$ the sets $E_{n}^{+}$ and $E_{n}^{+}$ are measurable sets, $E_{n}^{+}\cup E_{n}^{-}=E$ and $E_{n}^{+}\cap E_{n}^{-}=\emptyset$. Hence, for each $n$, either $E_{n}^{+}$ or $E_{n}^{-}$ has measure bigger or equal than $\gamma/2$. If $\mu(E_{n}^{+})> \gamma/2$ then define $F_{n}:=\{(\epsilon_1,\ldots,\epsilon_{n-1},-1,\epsilon_{n+1},\ldots)\,:\,\{\epsilon_{k}\}\in E_{n}^{+}\}$ and if $\mu(E_{n}^{-})\geq \gamma/2$ then define $F_{n}:=\{(\epsilon_1,\ldots,\epsilon_{n-1},1,\epsilon_{n+1},\ldots)\,:\,\{\epsilon_{k}\}\in E_{n}^{-}\}$.
By definition, $\mu(F_{n})\geq\gamma/2$ and if $x\in F_{n}$ then $g_{\alpha}(x)$ is either $\beta+2\alpha^{n}$ or $\beta-2\alpha^{n}$. Assume there exists $x\in F_{n}\cap F_{m}$ then $\beta\pm 2\alpha^{m}=\beta\pm 2\alpha^{n}$ and hence $\alpha^{n}=\pm \alpha^{m}$ then $n=m$. Therefore, we constructed a sequence of disjoint measurable sets $\{F_{n}\}_{n}$ each of measure $\mu(F_{n})\geq \gamma/2$ which is clearly impossible. Therefore, $W^{*}(X_{\alpha})$ is diffuse.\\

\vspace{0.1cm}
\noindent Let $p(x)$ be a polynomial $p(x)=a_{1}x^{n_1}+a_{2}x^{n_2}+\ldots+a_{k}x^{n_{k}}$ with coefficients $a_{i}\in\{1,-1\}$ and $\alpha\in (0,1)$ be such that $p(\alpha)=0$. (Note that there are infinitely many countable $\alpha$ in $(\frac{1}{2},1)$ with this property but none in $(0,\frac{1}{2}]$). Define the cylindrical sets 

$$G_1:=\{\{\epsilon_{n}\}_{n}\,:\,\epsilon_{n_i}=a_{i},\, i=1,\ldots,k\}\,\,\text{and}\,\, G_2:=\{\{\epsilon_{n}\}_{n}\,:\,\epsilon_{n_i}=-a_{i},\, i=1,\ldots,k\}$$ 
\vspace{0.1cm}

\noindent it is clear that $G_{1}\cap G_{2}=\emptyset$ and that $\mu(G_{1})=\mu(G_{2})=\frac{1}{2^k}$. The function $g_{\alpha}$ does not separates this two cylindrical sets and hence $W^{*}(g_{\alpha})\neq L^{\infty}(M,\mu)$. Therefore, $W^{*}(X_{\alpha})\neq \mathcal{A}$.
\end{proof}

\vspace{0.6cm}

\section{Moments of $A^{*}A$}

In this section we will give a combinatorial formula describing the moments of $A^{*}A$. Let 
$\{c_n\}_n\in l_1(\N)$ and $A=\sum_{n=1}^{+\infty}{c_n\,V_n}$ where $V_n=I^{\otimes(n-1)}\otimes V$. Then given $p\geq 1$ we see that
$$(A^*A)^p=\sum_{1\leq n_1,m_1,\ldots,n_p,m_p}{\overline{c_{n_1}}c_{m_1}\overline{c_{n_2}}c_{m_2}\ldots\overline{c_{n_p}}c_{m_p}\,
\,V_{n_1}^{*}V_{m_1}V_{n_2}^{*}V_{m_2}\ldots V_{n_p}^{*}V_{m_p}}.$$

\noindent For $p\geq 1$ consider $p$ elements of color red and $p$ of color white. Order them linearly and alternating the colors. Let $1\leq k\leq p$ and $n_1\geq n_2\geq \ldots \geq n_k$ be such that $\sum_{i=1}^{k}{n_i}=p$. We define $\alpha(p\,;n_1,\ldots,n_k)$ the number of partitions of these $2p$ elements in $k$ blocks $B_1, B_2, \ldots, B_k$ of size $2n_1, 2n_2, \ldots, 2n_k$ such that each block contains the same amount of element of each color and are alternating, i.e.: if we look at the elements of one block the colors are alternating. 

\vspace{0.2cm}
\begin{ej} For the case $p=2$ we have that $\alpha(2\,;1,1)=2$, $\alpha(2\,;2)=1$. For $p=3$ we have $\alpha(3\,;1,1,1)=6$, 
$\alpha(3\,;2,1)=6$ and $\alpha(3\,;3)=1$. Some of the possibles partitions for $p=3$ can be seen in Figure 3.
\end{ej}
\vspace{0.2cm}

\begin{figure}[Ht]
\begin{center}
\includegraphics[width=5cm]{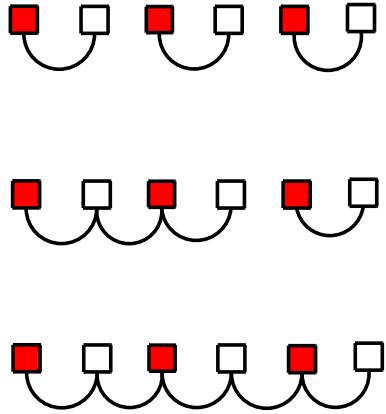}
\caption{An element of $\alpha(3\,;1,1,1)$ an element of $\alpha(3\,;2,1)$ and the only element of $\alpha(3\,;3)$}
\end{center}
\end{figure}

Given $1\leq n\leq p$ let us denote $\beta(p\,;n)$ the number of blocks of size $2n$ satisfying the alternating condition. We first choose the $n$ elements of red color which will be located at the positions 
$1\leq r_1<r_2<\ldots <r_n\leq 2p-1$ (note that the red elements are located at the odd integers while the white at the even). Then we choose the $n$ elements of white color. In order to satisfy the alternating condition, the positions $\{w_i\}_{i=1}^{n}$ of the white elements have to satisfy either
$$1\leq r_1<w_1<r_2< w_2 <\ldots <r_n< w_n\leq 2p$$
or
$$1\leq w_1<r_1<w_2<r_2 <\ldots <w_n<r_n\leq 2p-1.$$

\noindent If $r_1=1$ then we have $$\frac{1}{2^n}(r_2-1)\cdot(r_3-r_2)\ldots (r_n-r_{n-1})\cdot(2p+1-r_n)$$ possibilities to choose the white elements. If $r_1>1$ we have the option of either start with white or with red. Starting with white we have $$\frac{1}{2^n}(r_1-1)(r_2-r_1)\cdot(r_3-r_2)\ldots (r_n-r_{n-1})$$ and starting with red we have 
$$\frac{1}{2^n}(r_2-r_1)\cdot(r_3-r_2)\ldots (r_n-r_{n-1})\cdot(2p+1-r_n).$$
Then
\begin{eqnarray*}
\beta(p\,;n) & = & \frac{1}{2^n}\Bigg (\sum_{1\leq r_1<\ldots <r_n}{(r_2-r_1)\cdot(r_3-r_2)\ldots (r_n-r_{n-1})\cdot(2p+1-r_n)}\\ 
& + & \sum_{2\leq r_1<\ldots <r_n}{(r_1-1)(r_2-r_1)\cdot(r_3-r_2)\ldots (r_n-r_{n-1})} \Bigg).
\end{eqnarray*}

\vspace{0.3cm}
\noindent Note that $\beta(p\,;p-1)=2{p \choose p-1}.$

\vspace{0.6cm}

\noindent The next Lemma provides us with some information about the combinatorial numbers $\alpha(p\,;n_1,\ldots,n_k)$.

\vspace{0.2cm}
\begin{lem} Let $p\geq 1$ and $n_1\geq n_2\geq \ldots \geq n_k$ such that $\sum_{i=1}^{k}{n_i}=p$ let $\alpha(p\,;n_1,\ldots,n_k)$ then
\begin{enumerate}
\vspace{0.4cm}
\item $\alpha(p\,;1,1,\ldots,1)=p!$
\vspace{0.3cm}
\item $\alpha(p\,;p)=1$
\vspace{0.3cm}
\item $\alpha(p\,;p-n,1,\ldots,1)=\beta(p\,;p-n)\cdot n!$
\end{enumerate}
\end{lem}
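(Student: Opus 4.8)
The plan is to establish all three identities by direct enumeration, using repeatedly the elementary observation that a block of size $2$ is automatically admissible: by the equal-colour requirement such a block must consist of exactly one red and one white element, and any two-term sequence of distinct colours is alternating, so the alternating condition imposes nothing further on it.

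For (1), every partition counted by $\alpha(p\,;1,\ldots,1)$ is a partition of the $2p$ elements into $p$ blocks of size two, hence into $p$ pairs each formed by one red and one white element. Such partitions correspond bijectively to the bijections from the set of $p$ red elements to the set of $p$ white elements, and there are $p!$ of these. For (2), the only partition of the $2p$ elements into a single block of size $2p$ is the trivial one; listing its elements in increasing order of position produces the colour pattern $R,W,R,W,\ldots,R,W$, which alternates, so this partition is admissible and $\alpha(p\,;p)=1$.

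For (3), I would prove $\alpha(p\,;p-n,1,\ldots,1)=\beta(p\,;p-n)\cdot n!$ by choosing the large block first. Assuming $p-n\geq 2$ (the case $p-n=1$ being already covered by (1)), in any partition of type $(p-n,1,\ldots,1)$ the block of size $2(p-n)$ is the unique block that is not a pair, hence it is canonically determined by the partition; and conversely it may be any alternating block of size $2(p-n)$, of which there are $\beta(p\,;p-n)$ by the definition of $\beta$. Such a block automatically contains $p-n$ red and $p-n$ white elements, so its complement consists of $n$ red and $n$ white elements, and by the argument used in (1) there are precisely $n!$ ways to partition that complement into admissible pairs. Since these two choices are independent and together reconstruct each partition of the prescribed type exactly once, the product formula follows.

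The arguments are all elementary, and I do not expect a genuine obstacle; the only point requiring a little care is the bookkeeping in (3), where one must check that the large block is genuinely singled out among the blocks (which is why the hypothesis $p-n\geq 2$ is used) and that its equal-colour property is automatic, so that no additional restriction is placed on how the remaining $2n$ elements are paired.
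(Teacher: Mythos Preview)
Your argument is correct. The paper actually states this lemma without proof, so there is no proof to compare against; your direct enumeration is exactly the natural justification and matches the spirit of the analogous Lemma~5.1 for the numbers $\gamma(p\,;n_1,\ldots,n_k)$, which the paper does prove by the same kind of elementary counting. Your handling of the edge case $p-n=1$ in part~(3) is appropriate: the formula as written tacitly assumes the large block is distinguished, and you correctly isolate $p-n\ge 2$ for the product argument while noting that $p-n=1$ collapses to part~(1).
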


\vspace{0.5cm}

\begin{prop}\label{m} Let $p\geq 1$ then 
\begin{equation*}
\tau((A^*A)^p)=\sum_{k=1}^{p}{\Bigg[\frac{1}{2^k} \cdot \sum_{(n_1,\ldots,n_k)}{\Bigg( \alpha(p\,;n_1,\ldots,n_k)\cdot \sum_{ p_1,\ldots,p_k}{|c_{p_1}|^{2n_1}|c_{p_2}|^{2n_2}\ldots |c_{p_k}|^{2n_k}}\Bigg) } \Bigg]}
\end{equation*}
where the second sum runs over all the $k$--tuples such that $n_1\geq n_2\geq \ldots \geq n_k$ with $\sum_{i=1}^{k}{n_i}=p$, and the last one over all the possible $1\leq p_1,\ldots,p_k <+\infty$ with $p_i\neq p_j$ if $\,i\neq j$.\\
\end{prop}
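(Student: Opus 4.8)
The plan is to expand $(A^*A)^p$, apply the trace term by term, identify the surviving terms via the single-leg action of $V$ and $V^*$, and then recognize the combinatorial data of the survivors as the numbers $\alpha(p;n_1,\ldots,n_k)$.

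First I would write
\begin{equation*}
\tau\big((A^*A)^p\big)=\sum_{1\le n_1,m_1,\ldots,n_p,m_p}\overline{c_{n_1}}c_{m_1}\cdots\overline{c_{n_p}}c_{m_p}\ \tau\big(V_{n_1}^*V_{m_1}\cdots V_{n_p}^*V_{m_p}\big),
\end{equation*}
and encode a tuple of indices by the map $\pi\colon\{1,\ldots,2p\}\to\N$ with $\pi(2i-1)=n_i$ and $\pi(2i)=m_i$. I think of the $2p$ factors as linearly ordered, with the odd positions (each carrying a $V^*$, hence a scalar $\overline{c_{n_i}}$) colored red and the even positions (each carrying a $V$, hence a scalar $c_{m_i}$) colored white, exactly as in the definition of $\alpha$. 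Since operators supported on distinct tensor legs of $\mathcal R$ commute and $\tau$ is the tensor product of the normalized traces on the legs, one may regroup the product leg by leg, keeping the relative order on each leg, so that $\tau\big(V_{n_1}^*V_{m_1}\cdots V_{n_p}^*V_{m_p}\big)$ becomes the product over legs $j$ of the trace of the subword in $\{V,V^*\}$ indexed, left to right, by the fiber $\pi^{-1}(j)$.

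Next I would analyze a single-leg word in $V,V^*$. Because $V^2=(V^*)^2=0$, a word with two consecutive equal letters is the zero operator; an alternating word, using $VV^*=P$, $V^*V=Q$, $PV=V$, $QV^*=V^*$, equals $P$ or $Q$ if it has even length and equals $V$ or $V^*$ if it has odd length. Since $\tau(P)=\tau(Q)=\tfrac12$ and $\tau(V)=\tau(V^*)=0$, the $\pi$-term contributes a nonzero summand to the trace if and only if every fiber $\pi^{-1}(j)$, written in increasing order, alternates in parity and has even cardinality; equivalently, every block of the induced set partition of $\{1,\ldots,2p\}$ is alternating in color (hence, being of even size, has equally many red and white elements). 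When this holds and the induced partition has $k$ blocks of sizes $2n_1,\ldots,2n_k$, the trace of the word is $2^{-k}$ (one factor $\tfrac12$ per occupied leg) and the coefficient is $\prod_j|c_j|^{2n^{(j)}}$, where $2n^{(j)}$ is the size of the block sitting on leg $j$: that block receives $n^{(j)}$ red factors $\overline{c_j}$ and $n^{(j)}$ white factors $c_j$, which multiply to $|c_j|^{2n^{(j)}}$.

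Finally I would reassemble the sum. A map $\pi$ giving a nonzero term is exactly the datum of a set partition $\mathcal B$ of $\{1,\ldots,2p\}$ into alternating blocks together with an injection from the blocks of $\mathcal B$ into $\N$ recording on which leg each block sits. Grouping the alternating set partitions by their multiset of block sizes $2n_1\ge\cdots\ge 2n_k$ produces, by the very definition of $\alpha$, precisely $\alpha(p;n_1,\ldots,n_k)$ partitions for each admissible $(n_1,\ldots,n_k)$, and summing $2^{-k}\prod_j|c_j|^{2n^{(j)}}$ over all injective labelings of the $k$ blocks turns the leg-assignment into $\sum_{p_1,\ldots,p_k}|c_{p_1}|^{2n_1}\cdots|c_{p_k}|^{2n_k}$ taken over distinct $p_1,\ldots,p_k$. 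Collecting the contributions over $k$ and over $(n_1,\ldots,n_k)$ yields the stated identity. The delicate points are the single-leg case analysis and the final bookkeeping: one must check that index tuples with nonzero trace correspond bijectively to pairs (alternating set partition, injective leg-labeling) and that this bijection is compatible with the ordered sum over distinct $p_1,\ldots,p_k$, in particular when several of the $n_i$ coincide.
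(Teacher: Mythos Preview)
Your argument is correct and follows essentially the same route as the paper: expand $(A^*A)^p$, use $V^2=(V^*)^2=0$, $VV^*=P$, $V^*V=Q$, $\tau(V)=\tau(V^*)=0$, $\tau(P)=\tau(Q)=\tfrac12$ to see that only index tuples whose fibers are alternating of even length survive, and then match the surviving data with the definition of $\alpha(p;n_1,\ldots,n_k)$. The paper's proof is a short sketch of exactly this; you have simply supplied the leg-by-leg factorization and the bookkeeping (including the check that the ordered sum over distinct $p_1,\ldots,p_k$ absorbs the symmetry when block sizes repeat) that the paper leaves implicit.
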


\begin{proof}
We have to observe that
$$(A^*A)^p=\sum_{1\leq n_1,m_1,\ldots,n_p,m_p}{\overline{c_{n_1}}c_{m_1}\overline{c_{n_2}}c_{m_2}\ldots\overline{c_{n_p}}c_{m_p}\,
\,V_{n_1}^{*}V_{m_1}V_{n_2}^{*}V_{m_2}\ldots V_{n_p}^{*}V_{m_p}}$$
and since $\tau(V)=\tau(V^{*})=0$, $V^{2}={V^{*}}^{2}=0$, $VV^{*}=P$ and $V^{*}V=Q$ then $\tau(V_{n_1}^{*}V_{m_1}V_{n_2}^{*}V_{m_2}\ldots V_{n_p}^{*}V_{m_p})$ is going to be nonzero if all the $V$'s are paired with $V^{*}$'s in an alternating way. Using the definition of the numbers 
$\alpha(p\,;n_1,\ldots,n_k)$ it is not difficult to see that the formula in the Proposition follows.

\end{proof}

\vspace{0.8cm}

\begin{question}
Is there a nice formula, or recursive description of the numbers \\$\alpha(p\,;n_1,\ldots,n_k)$? If we fix $k$, can we at least compute recursively 
$$s_{p}(k):=\sum_{(n_1,n_2,\ldots,n_k)}{\alpha(p\,;n_1,\ldots,n_k)} \textrm{\quad where \quad} n_1\geq\ldots\geq n_k \textrm{\quad with\quad} \sum_{i=1}^{k}{n_i}=p\,\,?$$
\end{question}

\vspace{0.7cm}

\begin{center}
\begin{tabular}{|l|l|l|}
\hline
$p$ & $\alpha(p\,;n_1,\ldots,n_k)$ & $s_{p}(k)$\\
\hline \hline
$p=1$ & $\alpha(1;1)=1$ & $s_{1}(1)=1$\\
\hline
$p=2$ & $\alpha(2;2)=1$ & $s_{2}(1)=1$\\
      & $\alpha(2;1,1)=2$ & $s_{2}(2)=2$\\
\hline
$p=3$ & $\alpha(3;3)=1$ & $s_{3}(1)=1$\\
      & $\alpha(3;2,1)=6$ & $s_{3}(2)=6$\\
      & $\alpha(3;1,1,1)=6$ & $s_{3}(3)=6$\\
\hline
$p=4$ & $\alpha(4;4)=1$ & $s_{4}(1)=1$\\
      & $\alpha(4;3,1)=8$ & $s_{4}(2)=14$\\
      & $\alpha(4;2,2)=6$ & $s_{4}(3)=40$\\ 
      & $\alpha(4;2,1,1)=40$ & $s_{4}(4)=24$\\
      & $\alpha(4;1,1,1,1)=24$ & \\
\hline
\end{tabular}
\end{center}

\begin{figure}[Ht]
\begin{center}
\includegraphics[width=6cm]{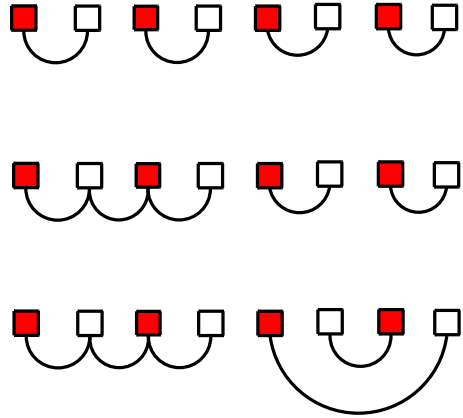}
\caption{An element of $\alpha(4\,;1,1,1,1)$ an two elements of $\alpha(4\,;2,1,1)$}
\end{center}
\end{figure}

\end{document}